\newtheorem{definition}{Definition}[section]
\newtheorem{assumption}[definition]{Assumption}
\newtheorem{lemma}[definition]{Lemma}
\newtheorem{proposition}[definition]{Proposition}
\newtheorem{theorem}[definition]{Theorem}
\newtheorem{remark}[definition]{Remark}
\newtheorem{example}[definition]{Example}
\numberwithin{equation}{section}
\def\calB{{\cal B}}
\def\calE{{\cal E}}
\def\calF{{\cal F}}
\def\calI{{\cal I}}
\def\calJ{{\cal J}}
\def\calK{{\cal K}}
\def\calN{{\cal N}}
\def\calS{{\cal S}}
\def\calT{{\cal T}}
\def\calX{{\cal X}}
\def\simle{ \raisebox{-.7ex}{ $\stackrel{{\textstyle <}}{\sim}$ \em} }
\def\ep{\epsilon}
\def\up{\uparrow}
\def\Dup{D_{\up}(E, \real)}
\def\itemi{\item[\textnormal{(i)}]}
\def\itemii{\item[\textnormal{(ii)}]}
\def\itemiii{\item[\textnormal{(iii)}]}
\def\itemiv{\item[\textnormal{(iv)}]}
\def\itemv{\item[\textnormal{(v)}]}
\newcommand{\bbB}{{\mathbb B}}
\newcommand{\bbF}{{\mathbb F}}
\newcommand{\bbL}{{\mathbb L}}
\newcommand{\bbX}{{\mathbb X}}
\newcommand{\bbY}{{\mathbb Y}}
\newcommand{\bbZ}{{\mathbb Z}}
\newcommand{\bbx}{{\mathbb x}}
\newcommand{\bby}{{\mathbb y}}
\newcommand{\real}{\mathbb{R}}
\newcommand{\naturals}{\mathbb{N}}
\newcommand{\integer}{\mathbb{Z}}
\newcommand{\Stheta}{\hat{\theta}^{0, (q)}_T}
\newcommand{\Snu}{\hat{\nu}^{0, (q)}_T}
\DeclareMathOperator*{\diam}{diam}
\DeclareMathOperator*{\argmin}{argmin}
\DeclareMathOperator*{\argmax}{argmax}
\begin{document}

\title{Sparse estimation for generalized exponential marked Hawkes process}


\author{
	Masatoshi Goda\thanks{Graduate School of Mathematical Sciences, University of Tokyo: 3-8-1 Komaba, Meguro-ku, Tokyo 153-8914, Japan. E-mail: goda@ms.u-tokyo.ac.jp} \thanks{Japan Science and Technology, CREST, Japan.}
}

\maketitle


\abstract{We established a sparse estimation method for the generalized exponential marked Hawkes process by the penalized method to ordinary method (P-O) estimator.
Furthermore, we evaluated the probability of the correct variable selection.
In the course of this, we established a framework for a likelihood analysis and the P-O estimation when there might be nuisance parameters, and the true value of the parameter might be at the boundary of the parameter space.
Finally, numerical simulations are given for several important examples.}

\maketitle

\section{Introduction}
The Hawkes process is a self-exciting point process introduced by \cite{Hawkes1971} and has a wide range of applications including seismic (see \cite{Ogata1981}), finance (see \cite{AbergelEtal2016}), and web data analysis (see \cite{GodaEtal2021}).
For the properties as a stochastic process, a class with exponential kernels has attracted much attention since its intensity process has Markov, geometric ergodic, and mixing properties, for example, see \cite{ClinetYoshida2017} and \cite{Goda2021}.
As an extension of the Hawkes process, the marked Hawkes process is known.
By the marked Hawkes process, it is possible to consider a model that adds the scale of the event and other characteristics to the occurrence times of the events.
The properties of the marked Hawkes process were investigated by \cite{Clinet2021}, and in which an important class called the generalized exponential marked Hawkes process (GEMHP) was introduced.
The GEMHP is a marked Hawkes process whose kernel has the flexible form, which is generated by terms multiplying $e^{-r \cdot}$, where $r>0$, by a polynomial or trigonometric function.
The GEMHP can be represented in terms of a Markov process, and we can establish the geometric ergodicity of the GEMHP.
Moreover, the convergence of moments for the quasi maximum likelihood estimator (QMLE) and the quasi Bayesian estimator (QBE) was established, in \cite{Clinet2021}, for a class of the GEMHP with a linear intensity.
In particular, a polynomial type large deviation inequality was established for the quasi-likelihood of the GEMHP in the application of the results of the quasi-likelihood analysis in \cite{Yoshida2011}.
The GEMHP with a linear intensity has been used for the models of earthquakes marked with the magnitude (see \cite{Ogata1981}), the limit order book in financ (see \cite{RambaldEtal2017}), etc.
In this paper, we introduce the Hawkes process marked by "topic" as an example of the GEMHP with a linear intensity.

\

Model selection is one of the most important topics in statistical inference.
In particular, sparse estimation methods like the least absolute shrinkage and selection operator (LASSO) in \cite{Tibshirani1996}, the elastic net in \cite{ZouHastie2005}, and so on, have been widely studied starting from linear regression problems.
By the sparse estimation method, we can execute parameter estimation and variable selection simultaneously.
Let $\theta^* = (\theta^*_j)_{j=1,\dots,p}$ be the true parameter of a parameter $\theta = (\theta_j)_{j=1,\dots,p}$ for $p \in \naturals$.
Moreover, let $\calJ^0 = \{ j \arrowvert \theta^*_j = 0\}$ and $\calJ^1 = \{ j \arrowvert \theta^*_j \neq 0\}$.
We write $\theta_{\calJ} = (\theta_j)_{j \in \calJ}$ for a vector $\theta$ and an index set $\calJ$.
The following two properties are called the oracle properties (see \cite{FanLi2001}) that the sparse estimator $\hat{\theta}_T$ should satisfy:
\begin{itemize}
  \item Selection consistency: $P\big[\hat{\theta}_{\calJ^0} = 0\big] \to 1$,
	\item Asymptotic normality: $\sqrt{T}\big(\hat{\theta}_{\calJ^1} - \theta^*_{\calJ^1}\big) \to^d N(0, \Gamma^{-1})$,
\end{itemize}
as $T \to \infty$ for some positive definite matrix $\Gamma$, where $T$ represents an observation time, and we omit $T$ in the expression $\hat{\theta}_{\calJ^0}$ and $\hat{\theta}_{\calJ^1}$.
We focus on the penalized method to ordinary method (P-O) estimator proposed in \cite{SuzukiYoshida2020}.
The P-O estimation is a sparse estimation method using the least-squares approximation method given a prior estimator with the consistency, and retuning using an ordinary estimation method such as the maximum likelihood estimator.
It satisfies the oracle properties under the suitable conditions.
Moreover, we can evaluate the probability of the correct variable selection.

\

A sparse estimation for the multivariate Hawkes process is useful to identify disconnections in networks.
It is also possible to identify which marks do not affect the trend by a sparse estimation for the marked Hawkes process.
Furthermore, it is also important that a sparse estimation prevents the overfitting of the model.
In a previous study applying a sparse estimation to the Hawkes process, \cite{HansenEtal2015} proposed an adaptive $L^1$-penalized methodology for the nonparametric case and evaluated the oracle inequality.
In the parametric case, there are several studies with respect to the Hawkes process using the exponential kernel.
In \cite{ZhouEtal2013}, they designed the log-likelihood function penalized by the nuclear and $L^1$ norm and an algorithm ADM4 for their estimation method, while they investigated the performance of their method through numerical experiments.
In \cite{BacryEtal2020}, they proposed the least-squares method with the entry-wise weighted nuclear and $L^1$ norm penalization and proved a sharp oracle inequality for their procedure.
In \cite{GodaEtal2021}, they introduced a hybrid method combined with the QMLE and the $L^1$-penalized QMLE and investigated the accuracy of model selection and the asymptotic normality through numerical experiments.
However, the oracle properties have not been established even for the exponential Hawkes process with no marks.

We apply the P-O estimation to the GEMHP, and we prove the oracle properties and evaluate the probability of the correct variable selection.
In this application, when the GEMHP contains zero parameters, it is often necessary to assume the existence of nuisance parameters.
For example, let $N_t = (N^1_t, \dots, N^d_t)$ be a $d$-dimensional exponential Hawkes process with the intensity process
\begin{eqnarray}
\label{Introduction eq1}
	\lambda^i_t = \mu_i + \sum_{j=1}^d \int_0^{t-} \alpha_{ij} e^{-\beta_{ij}(t-s)} dN^j_t, \ i= 1, \dots, d,
\end{eqnarray}
where $\mu_i$, $\alpha_{ij}$, and $\beta_{ij}$ are parameters for $i, j= 1, \dots, d$.
Then, $\beta_{ij}$ is undefined, that is, $\beta_{ij}$ is a nuisance parameter, when $\alpha_{ij}=0$.
We confirm that the polynomial type large deviation inequality for the quasi-likelihood of the GEMHP with nuisance parameters holds and thus that the consistency for the QMLE and the QBE holds.
Furthermore, in most cases of statistical inference, the true value of a parameter is represented as an interior point on a compact set.
However, when the true value of a parameter in the GEMHP is zero, it might be at the boundary of the parameter space.
For an example of an exponential Hawkes process with the intensity (\ref{Introduction eq1}), $\alpha_{ij}$'s are often assumed to take value in a compact subset in $[0, \infty)$.
Then, $\alpha_{ij}=0$ is realized on the boundary of the parameter space.
We prove that the P-O estimator works well even in such a situation where the true value is on the boundary.

\

We explain the GEMHP and its properties in Section \ref{GEMHP}.
In Section \ref{P-O}, we discuss the P-O estimator under the condition that some parameters are nuisance parameters and the true parameter is possibly on the boundary of the parameter space.
The main results about the application of the P-O estimation to the sparse GEMHP are in Section \ref{App to GEMHP}.
Finally, Section \ref{Ex and Sim} presents the results of some numerical experiments.
We introduce the Hawkes process marked by "topic" in this section.
The proofs of each statement are given in Appendix \ref{Appendix Proofs}.
Moreover, we give Additional numerical experiments in Appendix \ref{Appendix Sim}.


\section{Generalized exponential marked Hawkes process}
\label{GEMHP}

In this section, we review the theory in \cite{Clinet2021}.
In particular, we define the generalized exponential marked Hawkes process (GEMHP).
The GEMHP is a class of marked Hawkes processes that satisfies the Markov, ergodic, and mixing properties under the stability conditions.
In this article, we only focus on the GEMHP with the linear form intensity for our application in Section \ref{App to GEMHP}. 
On the other hand, we note that the Markov, ergodic, and mixing properties are proved for more general non-linear (sub-linear) form intensities.

\subsection{Marked point process}
\label{sub Marked point process}

First, we define the general marked point process.
Let $\bbB = (\Omega, \calF, \bbF=\{\calF_t\}_{t \ge 0}, P)$ be a stochastic basis, and $(\bbX, \calX)$ be a measurable space.
For $d \in \naturals$, we consider a sequence of couples $(T^i_n, X^i_n)_{n \in \integer, i = 1, \dots, d}$.
Suppose that $T^i_n$'s are $\bbF$-stopping times such that almost surely $T^i_0 = 0 < T^i_1 < \cdots < T^i_n < \cdots < \infty$ and $T^i_n \to \infty$ as $n \to \infty$ hold for each $i$, and $X^i_n$'s are $\bbX$-valued $\calF_{T^i_n}$-measurable random variables.
We define the $d$-dimensional marked point process $\bar{N} = (\bar{N}^1, \dots, \bar{N}^d)$ as a family of random measures on $\real_+ \times \bbX$ such that $\bar{N}^i(ds, dx) = \sum_{n \in \naturals} \delta_{(T^i_n, X^i_n)}(ds, dx)$.
Moreover, we call a random measure $\nu^i(ds, dx)$ the compensator of $\bar{N}^i(ds, dx)$ when $\bar{N}^i([0,t] \times F) - \nu^i([0,t] \times F)$ is a local martingale for any $F \in \calX$, see Theorem 1.8 in \cite{JacodShiryaev2003}.

\subsection{Generalized exponential marked Hawkes process}
\label{sub GEMHP}

We use the same notations in Subsection \ref{sub Marked point process}.
We write the counting process associated with $\bar{N}^i$ as $N^i_t = \bar{N}([0,t] \times \bbX)$ and jump times of the global counting process $\sum_{i=1}^d N^i_t$ as $(T_n)_{n \in \naturals}$.
Furthermore, let $(X_n)_{n \in \naturals}$ be a permutation of $(X^i_n)_{n \in \naturals, i = 1, \dots, d}$ similar to the relationship between jump times $(T_n)_{n \in \naturals}$ and $(T^i_n)_{n \in \naturals, i = 1, \dots, d}$.
Then, we define the mark process $X_t$ as a piecewise constant and right continuous stochastic process such that $X_t = X_n$ for $t \in [T_n, T_{n+1})$ where $T_0 = 0$ and $X_0 = X^1_0$.
The marked Hawkes process is defined as below.
\begin{definition}
\label{GEMHP Def 1}
A $d$-dimensional marked point process $\bar{N}$ is called a $d$-dimensional marked Hawkes process if the intensity process of the associated counting process $N^i$ has the form
\[
	\lambda^i_t = \phi_i \left( \left( \int_{[0,t) \times \bbX} h_{ij} (t-s, x) \bar{N}^j(ds, dx)\right)_{j = 1, \dots, d} , X_{t-} \right), \ i = 1, \dots, d,
\]
where $\phi_i \colon \real^d_+ \times \bbX \to \real_+$ is a continuous function and $h_{ij} \colon \real_+ \times \bbX \to \real_+$ is a measurable function for each $i , j = 1, \dots, d$.
\end{definition}

Then, the linear GEMHP is defined by restricting the form of the function $\phi_i$ and the kernel function $h_{ij}$.
For $p \in \naturals$, we denote the Frobenius inner product on a real matrix space $\real^{p \times p}$ as $\langle \cdot \arrowvert \cdot \rangle$.
\begin{definition}
\label{GEMHP Def 2}
A $d$-dimensional marked Hawkes process $\bar{N}$ is called a $d$-dimensional linear generalized exponential marked Hawkes process if the function $\phi_i$ and the kernel function $h_{ij}$ have the representations, for $i, j = 1, \dots, d$,
\[
	\phi_i(u, x) = \mu_i(x) + \sum_{j=1}^d u_j
\]
and
\[
	h_{ij}(s, x) = \langle A_{ij} \arrowvert e^{-sB_{ij}} \rangle g_{ij}(x),
\]
where $\mu_i \colon \bbX \to \real_+$ and $g_{ij} \colon \bbX \to \real_+$ are measurable functions, $A_{ij}, B_{ij} \in \real^{p \times p}$ for some $p \in \naturals$, and $e^{-sB_{ij}}$ is the matrix exponential for each $i , j = 1, \dots, d$.
That is, its intensity process has the form
\[
	\lambda^i_t = \mu_i(X_{t-}) + \sum_{j=1}^d \int_{[0,t) \times \bbX} \langle A_{ij} \arrowvert e^{-sB_{ij}}\rangle g_{ij}(x) \bar{N}^j(ds, dx)
\]
for $i = 1, \dots, d$.
\end{definition}
The temporal part $\langle A \arrowvert e^{-sB} \rangle$ of the kernel is represented as a linear combination of terms $P(s)(1 + C_1\cos(\xi s) + C_2\sin(\xi s))e^{-r}$, where $\xi, C_1, C_2 \in \real$ and $r>0$ are constants, and $P(s)=\sum_{k=0}^Pa_ks^k$ is a polynomial for $a_0,\dots, a_P \in \real$ and some $P \in \naturals$, see Proposition 3.1 of \cite{Clinet2021}.

The remainder of this section is devoted to the explanation of the sufficient conditions of Theorem \ref{GEMHP thm 1}.
We restrict the distribution of the mark process $X_t$ to maintain the Markov structure of the intensity process.
Let $(\kappa_n)_{n \in \naturals}$ be labels of the jumps of the global counting process $\sum_{i=1}^d N^i$, i.e., $\kappa_n$ is a $\{1, \dots, d\}$-valued random variable such that $\Delta N^{\kappa_n}_{T_n} = 1$ for $n \in \naturals$.
We write $\Delta T_n = T_n - T_{n-1}$.
Then, we assume that there exists a family of Feller transition kernels $\{Q_i\}_{i=1, \dots, d}$ on $\bbX \times \calX$ such that
\begin{eqnarray}
\label{GEMHP eq 1}
	P \big[ X_n \in F \big\arrowvert \kappa_n, \Delta T_n, \calF_{T_{n-1}}\big] = Q_{\kappa_n}(X_{n-1}, F)
\end{eqnarray}
for any $F \in \calX$ and $n \in \naturals$.
For the stability of the process, we assume that $B_{ij}$ has eigenvalues with positive real parts.
We define $\Phi_{ij}(x) = \int_{\real_+ \times \bbX} h_{ij}(s, y) Q_j(x, dy) ds$ and $G_{ij}(x) = \int_{\bbX} g_{ij}(y)Q_j(x, dy)$ for $x \in \bbX$.
$\Phi(x) = \{ \Phi_{ij}(x) \}_{i, j = 1, \dots, d}$ refers to the conditional expectation of the long-run effect of the excitation on the intensity process after a jump.
$G_{ij}(x)$ refers to the conditional expectation of a jump size in $\lambda^i$ when $N^j$ jumps.
By the assumption on $B_{ij}$, we have the representation $\Phi_{ij}(x) = \langle A_{ij} \arrowvert B^{-1}_{ij}\rangle G_{ij}(x)$ for any $i, j = 1, \dots, d$.

Let
\begin{eqnarray}
\label{GEMHP Eq 1}
	\calE^{ij}_t = \int_{[0, t] \times \bbX} e^{-(t-s)B_{ij}} g_{ij}(x) \bar{N}_j (ds, dx)
\end{eqnarray}
for $i, j = 1, \dots, d$ and $Z_t = (\calE_t, X_t) = \big((\calE^{ij}_t)_{i,j = 1, \dots, d}, X_t \big)$.
$Z_t$ obviously drives the intensity process of the GEMHP.
We write the transition kernel of the global mark process $X$ by $Q$.
It satisfies
\[
	Q(Z_{T_{i-1}}, \cdot) = \frac{1}{\xi(\Delta T_i, Z_{T_{i-1}})}\sum_{i=1}^d \xi^i(\Delta T_i, Z_{T_{i-1}})Q_i(X_{i-1}, \cdot),
\]
where $\xi(t, z) = \sum_{i=1}^d \xi^i(t, z)$ and $\xi^i(t, z) = \mu_i(x) + \sum_{j=1}^d \langle A_{ij} \arrowvert e^{-tB_{ij}} \ep_{ij} \rangle$ for $t \ge 0$ and $z = ((\ep_{ij})_{i,j}, x)$, see Proposition 3.2 of \cite{Clinet2021}.
We call a non-negative function $f$ a norm-like function if $f(x) \to \infty$ as $\lvert x \rvert \to \infty$ holds.
The following statements are the sufficient conditions for the geometric ergodicity and the geometric mixing property of the GEMHP.
\begin{description}
	\item[\textnormal{[L1]}] There exist norm-like functions $f_X$ and $u_X$ such that
		\begin{eqnarray}
		\label{GEMHP eq 2}
				\sum_{i=1}^d \mu_i(x) = O(u_X(x)) \text{ \ as $\lvert x \rvert \to \infty$, }
		\end{eqnarray}
		\begin{eqnarray}
		\label{GEMHP eq 3}
				\sum_{i=1}^d \mu_i(x) \int_{\bbX} \big\{ f_X(y) - f_X(x) \big\} Q_i(x, dy) \le -u_X(x) \text{ \ for any $x \in \bbX$, }
		\end{eqnarray}
	 	\begin{eqnarray}
		\label{GEMHP eq 4}
				\sum_{i=1}^d \mu_i(x) \sum_{j=1}^d G_{ji}(x) = o(u_X(x)) \text{ \ as $\lvert x \rvert \to \infty$, }
		\end{eqnarray}
		and there exists $\bar{c} > 0$ such that
		\begin{eqnarray}
		\label{GEMHP eq 5}
				\sup_{x \in \bbX, j =1, \dots, d} \int_{\bbX} e^{ \bar{c} \left[ \sum_{i=1}^d g_{ij}(y) + f_X(y) - f_X(x) \right] } Q_j(x, dy) < \infty.
		\end{eqnarray}
	\item[\textnormal{[L2]}] There exist $\kappa \in \real^d$ with positive coefficients and $\rho \in [0, 1)$ such that, component-wise, $\sup_{x\in \bbX} ( \Phi(x)^T \kappa) \le \rho \kappa$.
	\item[\textnormal{[ND1]}] There exist $\underline{\phi}, \underline{g} > 0$ such that $\phi_i > \underline{\phi}$ and $g_{ij} > \underline{g}$ for any $i, j = 1, \dots, d$.
	\item[\textnormal{[ND2]}] The transition kernel $Q$ admits a reachable point $x_0 \in \bbX$.  Moreover, for any $j = 1, \dots, d$, the transition kernel $Q_j$ admits a sub-component $\calT_j$, such that there exist a lower semi-continuous function $r_j \colon \bbX^2 \to \real_+$ and a non-trivial measure $\sigma_j$ on $\calX$, such that
	\begin{itemize}
		\item[$\bullet$] $\sigma_j(O) > 0$ for any non-empty open set $O \in \calX$,
		\item[$\bullet$] $\calT_j(x, F) = \int_{F} r_j(x, y) \sigma_j(dy)$ and $\calT_j(x, \bbX)>0$ for any $x \in \bbX$ and $F \in \calX$.
	\end{itemize}
\end{description}

We write the transition kernel of $Z$ as $P^t$ for $t \ge 0$.
Finally, the $V$-norm of a measure $\mu$ on a measurable space $(S, \calS)$ is defined as
\[
	\big\| \mu \big\|_V = \sup_{\psi \le V} \left\lvert \int_S \psi(s) \mu(ds) \right\rvert
\]
for a positive function $V$, where the supremum is taken over all the measurable functions $\psi$ such that $\psi(s) \le V(s)$ for all $s \in S$.
The following theorem is the main theorem in this section, which is shown in Theorem 3.7 of \cite{Clinet2021}.

\begin{theorem}{\textnormal{(Theorem 3.7 in \cite{Clinet2021})}}
\label{GEMHP thm 1}
Under [L1]-[L2] and [ND1]-[ND2], $Z$ is $V$-geometrically ergodic, i.e., there exist a unique invariant measure $\pi$ and constants $(a_{ij})_{i,j=1, \dots, d} \in \real^{p^2d^2}, \eta >0, C \ge 0, r \in [0, 1)$ such that for any $t>0$ and $z = (\ep, x) \in \real^{p^2d^2} \times \bbX$
\[
	\big\| P^t(z, \cdot) - \pi \big\|_V \le C ( 1 + V(z) ) r^t
\]
where $V(z) = \exp\big( \sum_{i, j = 1}^d \langle a_{ij} \arrowvert \ep_{ij} \rangle + \eta f_X(x) \big)$.
Moreover, $Z$ is $V$-geometrically mixing, i.e., there exist positive constants $C'>0$, $r' \in [0,1)$ such that for any $t,u \ge 0$ and measurable functions $\phi, \psi$ with $\phi^2 \le V, \psi^2 \le V$,
\[
	\Big\lvert E\big[\phi(Z_{t+u})\psi(Z_t) \big\arrowvert Z_0=z\big] - E\big[\phi(Z_{t+u})\big\arrowvert Z_0=z\big]E\big[\psi(Z_t)\big\arrowvert Z_0=z\big]\Big\rvert \le C'V(z)r'^u.
\]
\end{theorem}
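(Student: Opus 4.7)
The stated result is exactly Theorem 3.7 of \cite{Clinet2021}, so one option is to quote it directly; I will instead sketch how I would reprove it. The strategy is the Meyn-Tweedie framework, applied first to the discrete skeleton of $Z$ at its jump times and then lifted to continuous time. The two ingredients are a geometric Foster-Lyapunov drift for the candidate $V$ and a petite-set/minorization condition for the skeleton. The starting observation is that $Z_t = (\calE_t, X_t)$ is strong Markov: between jumps each $\calE^{ij}$ satisfies $d\calE^{ij}_t = -B_{ij}\calE^{ij}_t\,dt$ while $X_t$ is constant, and at the $n$-th jump $\calE^{ij}$ receives the increment $g_{ij}(X_n)\mathbf{1}_{\{\kappa_n = j\}}$ while $X_t$ is resampled by $Q_{\kappa_n}(X_{n-1}, \cdot)$ according to \eqref{GEMHP eq 1}. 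This pins down the generator of $Z$ and the transition kernel of $(Z_{T_n})_{n \in \naturals}$.

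The central step is a one-step geometric drift for $V(\ep, x) = \exp\bigl(\sum_{i,j}\langle a_{ij}|\ep_{ij}\rangle + \eta f_X(x)\bigr)$. Using the identity $\Phi_{ij}(x) = \langle A_{ij}|B_{ij}^{-1}\rangle G_{ij}(x)$, the expected jump increment of the linear part $\sum_{i,j}\langle a_{ij}|\ep_{ij}\rangle$ is controlled by the matrix $\Phi$, and [L2] supplies a Perron-Frobenius style vector $\kappa$ with $\Phi(\cdot)^\top\kappa \le \rho\kappa$ componentwise. Choosing the $a_{ij}$'s from $\kappa$ guarantees that, once the deterministic exponential decay of $\calE$ between jumps is taken into account, the net contribution to $\log V$ over one interjump interval is strictly negative for large $|\ep|$. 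The norm-like bounds \eqref{GEMHP eq 2}-\eqref{GEMHP eq 4} together with the exponential moment \eqref{GEMHP eq 5} simultaneously handle the $\eta f_X$ part and keep $V$ integrable under $Q_j$. The main technical obstacle is the joint tuning of $(a_{ij}, \eta, \bar c)$ so that the drift is strictly negative outside a compact sub-level set while \eqref{GEMHP eq 5} still yields finite jump moments of $V$; this balance is precisely what [L2] is engineered to permit.

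Minorization comes from [ND2]: the reachable point $x_0$ and the lower semi-continuous densities $r_j$ of the sub-kernels $\calT_j$ yield, by iterating along a suitable label sequence, a strictly positive lower bound on $P^t(z, \cdot)$ on an open neighborhood of $x_0$ for some $t>0$, which makes every sub-level set $\{V \le M\}$ petite for the skeleton. Combined with the geometric drift, Theorem 15.0.1 of Meyn-Tweedie delivers $V$-geometric ergodicity of the skeleton chain, and a continuous-time extension in the spirit of Down-Meyn-Tweedie gives the stated bound $\|P^t(z, \cdot) - \pi\|_V \le C(1+V(z))r^t$.

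The $V$-geometric mixing statement follows from $V$-geometric ergodicity by the Markov property: conditioning on $Z_t$ reduces the mixing quantity to $\bigl|E[\psi(Z_t)P^u\phi(Z_t)|Z_0=z] - E[\psi(Z_t)|Z_0=z]\,E[P^u\phi(Z_t)|Z_0=z]\bigr|$, and a Cauchy-Schwarz estimate combined with the ergodicity bound $|P^u\phi(y) - \pi(\phi)| \lesssim (1+V(y))r^u$ applied to both $\phi^2 \le V$ and $\psi^2 \le V$ produces the asserted $C'V(z)r'^u$ with some $r' \in [0,1)$.
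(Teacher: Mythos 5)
This statement is imported verbatim from the literature: the paper does not prove it, but cites it as Theorem 3.7 of \cite{Clinet2021}, so your first option (quoting it directly) is exactly what the paper does, and there is no internal proof to compare your sketch against. Your outline is nonetheless a faithful description of the strategy used in the cited reference: a Foster--Lyapunov drift for the exponential Lyapunov function $V$ with coefficients built from the Perron--Frobenius-type vector of [L2], a petite-set/minorization condition extracted from [ND2], the Meyn--Tweedie and Down--Meyn--Tweedie machinery for the geometric ergodicity, and a conditioning argument for the mixing bound. Two points are glossed over and would need care in a full write-up. First, working with the skeleton of $Z$ at the \emph{jump times} gives the embedded chain, whose ergodicity does not transfer to the continuous-time process without controlling the random interjump durations; the cleaner route (and the one in \cite{Clinet2021}) is to verify the drift condition for the extended generator of $Z$ directly in continuous time, or to use a deterministic-time skeleton. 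Second, in the mixing step, bounding $E\big[\psi(Z_t)\big(P^u\phi(Z_t)-\pi(\phi)\big)\,\big|\,Z_0=z\big]$ by $C\,V(z)\,r'^u$ requires the ergodicity bound at the level of $V^{1/2}$ (so that the product of $|\psi|\le V^{1/2}$ with the error term is controlled by $E[V(Z_t)\mid Z_0=z]\lesssim 1+V(z)$); one must first upgrade $V$-geometric ergodicity to $V^{1/2}$-geometric ergodicity, which is standard but not automatic from Cauchy--Schwarz alone. Neither point is a fatal gap, but both are where the actual work lies.
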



\section{P-O estimator with nuisance parameter}
\label{P-O}

The penalized method to ordinary method (P-O) estimator was introduced by \cite{SuzukiYoshida2020}.
The P-O estimator allows us to execute parameter estimation and variable selection simultaneously.
In this section, we extend the applicable condition of the P-O estimation to the case where nuisance parameters might exist, and the true parameter might be at the boundary of the parameter space.
In general, a parameter that is not subject to estimation is called a nuisance parameter.
In particular, we admit a nuisance parameter whose true value might be undefined.

Let $\vartheta = (\theta^0, \theta^1, \nu^0, \nu^1) \in \Xi = \bar{\Theta}^0 \times \Theta^1 \times \bar{\calN}^0 \times \calN^1$ be a parameter, where $\Theta^0, \Theta^1, \calN^0$, and $\calN^1$ are open convex bounded subsets of $\real^{p_0}, \real^{p_1}, \real^{n_0}$, and $\real^{n_1}$ for $p_0, p_1, n_0, n_1 \in \naturals$, respectively.
$\theta^0 = (\theta^0_1, \dots, \theta^0_{p_0})$ is a parameter that might take the value $\theta^0_i = 0$ for some $i$.
$\theta^1 = (\theta^1_1, \dots, \theta^1_{p_1})$ is a non-zero parameter.
$\nu^0 = (\nu^0_1, \dots, \nu^0_{n_0})$ is a nuisance parameter that might take the value $\nu^0_i = 0$ for some $i$.
$\nu^1 = (\nu^1_1, \dots, \nu^1_{n_1})$ is a nuisance parameter that takes a non-zero value.
Note that $\nu^0$ and $\nu^1$ might not have the true value.
Suppose that $(\theta^{0*},  \theta^{1*}) = (\theta^{0*}_1, \dots, \theta^{0*}_{p_0}, \theta^{1*}_1, \dots, \theta^{1*}_{p_1}) \in \bar{\Theta}^0 \times \Theta^1$ is the true parameter, and let $\calJ^0 = \big\{ j = 1, \dots, p_0 \big\arrowvert \theta^{0*}_j = 0 \big\}$ and $\calJ^1 = \big\{ j = 1, \dots, p_0 \big\arrowvert \theta^{0*}_j \neq 0 \big\}$.
We write $d_1 = \# \calJ^1$.
We remark that $\theta^{0*}$ might be on the boundary of $\Theta_0$.

\begin{example}
Let $\bar{N}$ be a GEMHP with the intensity $\lambda^i_t(\vartheta^*)$: Here, for $i = 1, \dots, d$,
\begin{eqnarray*}
	\lambda^i_t(\vartheta) = \mu_i + \sum_{j=1}^d \int_{[0,t) \times \real^{d'}} \Bigg( \sum_{k=0}^p \alpha_{ijk}s^k \Bigg) \Bigg(1 + \sum_{l=1}^{d'} m_{ijl}x_{l}^2 \Bigg) e^{-(t-s)\beta_{ij}} \bar{N}^j(ds, dx),
\end{eqnarray*}
where $\vartheta = \big( (\mu_i)_i, (\alpha_{ijk})_{i,j,k}, (\beta_{ij})_{i,j}, (m_{ijl})_{i,j,l} \big)$ are non-negative parameters and $\vartheta^* = \big( (\mu_i^*)_i, (\alpha_{ijk}^*)_{i,j,k},$ $(\beta_{ij}^*)_{i,j},$ $(m_{ijl}^*)_{i,j,l} \big)$ is the true parameters with nuisance parameters.
We can consider the situation where some $\alpha_{ijk}$ and $m_{ijl}$ might be $0$ besides $\mu_i$ and $\beta_{ij}$ are always positive.
In this case, $\beta_{ij}^*$ and $m_{ijl}^*$ are undefined parameters, and thus nuisance parameters, when $\alpha^*_{ijk}=0$ for all $k$.
In other words, we can write $\theta^0 = \big((\alpha_{ijk})_{i,j,k}, (m_{ijl} \arrowvert i,j,l \ s.t. \ (i,j) \notin \calI^0)\big)$, $\theta^1 = \big((\mu_i)_i, (\beta_{ij} \arrowvert i,j \ s.t. \ (i,j) \notin \calI^0)\big)$, $\nu^0 = (m_{ijl} \arrowvert i,j,l \ s.t. \ (i,j) \in \calI^0)$, and $\nu^1 = (\beta_{ij} \arrowvert i,j \ s.t. \ (i,j) \in \calI^0)$, where $\calI^0 = \big\{(i,j) \big\arrowvert \alpha^*_{ijk} = 0 \text{ for all } k = 1, \dots, p \big\}$.
Then, we can rewrite $\vartheta = (\theta^0, \theta^1, \nu^0, \nu^1)$.
\end{example}

Let $T>0$ be an observation time index.
We often consider the case where $T \in \naturals$ with discrete time observation or $T \in \real_+$ with continuous time observation.
For convenience of explanation, we consider two objective functions $\bbL^1_T \colon \Xi \to \real$ and $\bbL^2_T \colon \Xi \to \real$.
For example, $\bbL^1_T$ and $\bbL^2_T$ are log-likelihood functions multiplied by $-1$.
The objective function $Q^{(q)}_T \colon \bar{\Theta}^0 \times \bar{\calN}^0 \to \real$ will be defined later.
We denote the $j$-th component of each estimator $\theta_T$ as $\theta_j$ by omitting $T$.
If $\argmin$ is realized at multiple points, we take one of them arbitrarily.
The P-O estimation is done in the following three steps:

\begin{description}
	\item[{\bf Step 1.}] We obtain the first estimator of $\vartheta$ by
		\[
			\big( \tilde{\theta}^0_T, \tilde{\theta}^1_T, \tilde{\nu}^0_T, \tilde{\nu}^1_T \big) = \argmin_{\vartheta \in \Xi} \bbL^1_T(\vartheta).
		\]
	\item[{\bf Step 2.}] We obtain the second estimator of $(\theta^0, \nu^0)$ by
		\[
			\big( \Stheta, \Snu \big) = \argmin_{ (\theta^0, \nu^0 ) \in \bar{\Theta}^0 \times \bar{\calN}^0} Q^{(q)}_T(\theta^0, \nu^0),
		\]
		where the objective function $Q^{(q)}_T$ depends on the first estimator $\big( \tilde{\theta}^0_T, \tilde{\nu}^0_T \big)$.
	\item[{\bf Step 3.}] We obtain the third estimator of $\vartheta$ by
		\[
			\big( \check{\theta}^0_T, \check{\theta}^1_T, \check{\nu}^0_T, \check{\nu}^1_T \big) = \argmin_{ \vartheta \in \hat{\Theta}^0_T \times \Theta^1 \times \hat{\calN}^0_T \times \calN^1} \bbL^2_T(\vartheta),
		\]
		where $\hat{\Theta}^0_T = \big\{\theta^0 \in \bar{\Theta}^0 \big\arrowvert \theta^0_j = 0, j \in \hat{\calJ}^0_T \big\}$, $\hat{\calJ}^0_T = \big\{ j = 1, \dots, p_0 \big\arrowvert \hat{\theta}^{0, (q)}_j = 0 \big\}$, $\hat{\calN}^0_T = \big\{\nu^0 \in \bar{\calN}^0 \big\arrowvert \nu^0_j = 0, j \in \hat{\calK}^0_T \big\}$, and $\hat{\calK}^0_T = \big\{ j = 1, \dots, n_0 \big\arrowvert \hat{\nu}^{0, (q)}_j = 0 \big\}$.
\end{description}
We call $\check{\vartheta}_T = \big( \check{\theta}^0_T, \check{\theta}^1_T, \check{\nu}^0_T, \check{\nu}^1_T \big)$ the P-O estimator.
The sparse model selection is derived from Step 2.
The objective function $Q^{(q)}_T(\theta^0, \nu^0)$ is constructed by using the first estimators $\tilde{\theta}^0_T, \tilde{\nu}^0_T$ as below:
{\small\begin{eqnarray}
\label{P-O Eq 2-1}
	Q^{(q)}_T(\theta^0, \nu^0) = \sum_{j=1}^{p_0} \left( \big( \theta^0_j - \tilde{\theta}^0_j \big)^2 + \kappa^{\theta}_j \lvert \theta^0_j \rvert^q \right) +  \sum_{j=1}^{n_0} \left( \big( \nu^0_j - \tilde{\nu}^0_j \big)^2 +\kappa^{\nu}_j  \lvert \nu^0_j \rvert^q \right),
\end{eqnarray}}
\hspace{-5pt}where $q \in (0, 1], \kappa^{\theta}_j = \alpha_T \big\lvert \ep_T + \tilde{\theta}^0_j \big\rvert^{-\gamma},  \kappa^{\nu}_j = \alpha_T \big\lvert \ep_T + \tilde{\nu}^0_j \big\rvert^{-\gamma}, \gamma > -(1-q)$, and $\ep_T, \alpha_T$ are deterministic sequences.
We remark that $\kappa^{\theta}_j$ and $\kappa^{\nu}_j$ depend on $T$.
We often choose $\ep_T$ and $\alpha_T$ to converge to $0$ as $T \to \infty$, see Remark \ref{P-O Rem 2}.
Let $a_T = \max_{ j \in \calJ^1} \kappa^{\theta}_j$ and $b_T = \min_{j \in \calJ^0} \kappa^{\theta}_j$.

We retune the estimator in Step 3.
We rewrite the notation in Step 3 to describe the properties of components with the non-zero true values in the third estimator.
Without loss of generality, we can assume that $\theta^0 = (\phi, \psi) \in \bar{\Theta}^0_{\phi} \times \bar{\Theta}^0_{\psi} = \bar{\Theta}^0$ and its true value $\theta^{0*} = (\phi^*,\psi^*) = (\phi^*, 0)$.
Let $\bar{\bbL}^2_T(\phi, \theta^1, \nu^0, \nu^1) = \bbL^2_T(\phi, 0, \theta^1, \nu^0, \nu^1)$ and
\[
	\big( \bar{\phi}_T, \bar{\theta}^1_T, \bar{\nu}^0_T, \bar{\nu}^1_T \big) = \argmin_{ (\phi, \theta^1, \nu^0, \nu^1) \in \bar{\Theta}^0_{\phi} \times \Theta^1 \times \bar{\calN}^0 \times \calN^1 } \bar{\bbL}^2_T(\phi, \theta^1, \nu^0, \nu^1).
\]
By contrary, we write $\check{\theta}^0_{\calJ^1} = \big(\check{\theta}^{0}_j\big)_{j \in \calJ^1}$.
We call a stochastic process $X_T$ is $L^{\infty-}$-bounded if $\sup_T E\left[ \left\lvert  X_T\right\lvert ^p \right] < \infty$ holds for all $p \ge 1$.
We introduce sufficient conditions for the oracle properties of the above P-O estimator.
Note that the following assumptions demand the properties of the first and third-step estimators.
In other words, it is not necessary to assume the objective functions $\bbL^1_T$ and $\bbL^2_T$ to establish the oracle properties of the P-O estimator.

\begin{assumption}
\label{P-O Asm 1}
\begin{enumerate}
	\itemi $\tilde{\theta}^0_T$ is $\sqrt{T}$-consistent, that is, $\sqrt{T} \big( \tilde{\theta}^0_T - \theta^{0*} \big) = O_p(1)$ as $T \to \infty$.
	\item[(i)$'$] $\sqrt{T} \big( \tilde{\theta}^0_T - \theta^{0*} \big)$ is $L^{\infty-}$-bounded.
	\itemii $T^{1/2} a_T = O_p(1)$ and $T^{(2-q)/2} b_T \to^p \infty$ as $T \to \infty$.
	\itemiii $\sqrt{T}\big\{ ( \bar{\phi}_T, \bar{\theta}^1_T ) - ( \phi^*, \theta^{1*} ) \big\} \to^d \Lambda^{-1/2}(\nu^*) \xi$ as $T \to \infty$, where $\Lambda(\nu^*) \in \real^{(d_1+ p_1) \times (d_1+ p_1)}$ is a positive definite matrix for any $\nu^* \in \bar{\calN}^0 \times \calN^1$, and $\xi$ is a $(d_1+ p_1)$-dimensional standard Gaussian random vector.
	\itemiv There exists $\ep \in (-1+q, \gamma)$ such that $T^{-(1+\gamma-\ep)/2} \alpha_T^{-1} = O(1)$ as $T \to \infty$.
	\itemv $\sqrt{T}\big\{ ( \bar{\phi}_T, \bar{\theta}^1_T ) - ( \phi^*, \theta^{1*} ) \big\}$ is $L^{\infty-}$-bounded.
\end{enumerate}
\end{assumption}

\begin{remark}
\label{P-O Rem 1}
	Assumption \ref{P-O Asm 1} (i)$'$ is obviously a stronger condition than (i).
	We need (i)' to evaluate the probability of a correct variable selection for the P-O estimator.
	In the case where $\bbL_T^1$ and $\bbL_T^2$ are the quasi log-likelihood functions, (i)' and (v) are satisfied if the polynomial type large deviation inequality for the quasi likelihood ratio random field holds, see Proposition 1 in \cite{Yoshida2011}.
\end{remark}

\begin{remark}
\label{P-O Rem 2}
Assumptions \ref{P-O Asm 1} (ii) and (iv) are conditions on the weight of the penalty term.
These conditions are satisfied, for example, in the following setup under Assumption \ref{P-O Asm 1} (i).
We take $a \in (0, 1-q+\gamma)$ for given $q \in (0, 1]$ and $\gamma > -1+q$.
Moreover, we take positive deterministic sequences $\alpha_T, \ep_T$ that satisfy $\alpha_T = O\big(T^{-(1+a)/2})$ and $\ep_T = O(T^{-1/2})$.
Then, as $T \to \infty$,
\[
	T^{\frac{1}{2}} a_T = O\big(T^{-\frac{a}{2}}\big) \left\lvert  O\big(T^{-\frac{1}{2}}\big) + \min_{j \in \calJ^1} \big(\tilde{\theta}^0_j -\theta^{0*}_j \big) + \min_{j \in \calJ^1 } \theta^{0*}_j  \right\lvert ^{-\gamma} = o_p(1)
\]
and
\[
T^{\frac{2-q}{2}} b_T= O\big(T^{\frac{1-q+\gamma-a}{2}}\big) \left\lvert  O(1) + \max_{j \in \calJ^0} \sqrt{T}\big(\tilde{\theta}^0_j -\theta^{0*}_j \big)  \right\lvert ^{-\gamma} \to^p \infty.
\]
Moreover, $T^{-(1+\gamma-\ep)/2} \alpha_T^{-1} = O(1)$ as $T \to \infty$ holds for $\ep = \gamma - a \in (-1+q, \gamma)$.
\end{remark}

The following theorem is the main theorem in this section.
The statement (i) means the selection consistency in Step 2.
Moreover, the statements (ii) and (iv) say that the probability of a correct variable selection converges to $1$ in a polynomial decay as $T \to \infty$.
The statement (iii) is about the asymptotic normality of the P-O estimator.
In particular, the statements (iii) and (iv) mean the oracle properties of the P-O estimator.
Compared to the result of \cite {SuzukiYoshida2020}, each statement is extended to in the situation where nuisance parameters might exist and the true parameter might be at the boundary of the parameter space.
\begin{theorem}
\label{P-O Thm 1}
\begin{enumerate}
	\itemi Under Assumption \ref{P-O Asm 1} (i) and (ii),
	\[
		P\left[ \hat{\calJ}^0_T = \calJ^0 \right] \to 1 \ \text{ as \ $T \to \infty$}.
	\]
	\itemii Under Assumption \ref{P-O Asm 1} (i)', (ii) and (iv), for all $L>0$, there exists $C_L >0$ such that
	\[
		P\left[ \hat{\calJ}^0_T = \calJ^0 \right] \ge 1 - C_L T^{-L}
	\]
	for all $T>0$.
	\itemiii Under Assumption \ref{P-O Asm 1} (i), (ii) and (iii)
	\[
		\sqrt{T}\big\{ ( \check{\theta}^0_{\calJ^1}, \check{\theta}^1_T ) - ( \phi^*, \theta^{1*} ) \big\} \to^d \Lambda^{-1/2}(\nu^*) \xi \ \text{ as \ $T \to \infty$}.
	\]
	\itemiv Under Assumption \ref{P-O Asm 1} (i)$'$, (ii), (iv) and (v), $\sqrt{T} \big\{ (\check{\theta}^0_T, \check{\theta}^1_T ) - (\theta^{0*}, \theta^{1*}) \big\}$ is $L^{\infty-}$-bounded. Moreover, for all $L>0$, there exists $C_L >0$ such that
	\[
		P\left[ \check{\calJ}^0_T = \calJ^0 \right] \ge 1 - C_L T^{-L}
	\]
	for all $T>0$, where $\check{\calJ}^0_T = \big\{ j = 1, \dots, p_0 \big\arrowvert \check{\theta}^0_j = 0 \big\}$.
\end{enumerate}
\end{theorem}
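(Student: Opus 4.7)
The plan is to exploit the fact that $Q^{(q)}_T$ is separable in the coordinates of $\theta^0$ and $\nu^0$, which reduces the Step-2 selection question to a one-dimensional thresholding analysis for each coordinate. Once correct selection is established, the Step-3 optimization collapses on the good event to the restricted problem defining $(\bar{\phi}_T, \bar{\theta}^1_T)$, and parts (iii) and (iv) reduce to Assumption \ref{P-O Asm 1} (iii) and (v) via Slutsky's lemma and a truncation argument.

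For part (i), fix $j$ and write $f_j(\theta) = (\theta - \tilde{\theta}^0_j)^2 + \kappa^{\theta}_j |\theta|^q$. For $j \in \calJ^0$, the inequality $f_j(0) \le f_j(\theta)$ for every admissible $\theta \ne 0$ reduces to $2|\tilde{\theta}^0_j||\theta| \le \theta^2 + \kappa^{\theta}_j |\theta|^q$, whose right-hand side has minimum of order $(\kappa^{\theta}_j)^{1/(2-q)}$ over $\theta > 0$. Assumption (i) gives $|\tilde{\theta}^0_j| = O_p(T^{-1/2})$, while Assumption (ii) via $T^{(2-q)/2} b_T \to^p \infty$ gives $(\kappa^{\theta}_j)^{1/(2-q)} \gg T^{-1/2}$, so $\hat{\theta}^{0,(q)}_j = 0$ with probability tending to one. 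For $j \in \calJ^1$, the difference $f_j(0) - f_j(\tilde{\theta}^0_j) = (\tilde{\theta}^0_j)^2 - \kappa^{\theta}_j |\tilde{\theta}^0_j|^q$ is bounded below by a positive constant asymptotically, since $\tilde{\theta}^0_j \to \theta^{0*}_j \ne 0$ and $\kappa^{\theta}_j \le a_T = O_p(T^{-1/2}) \to 0$. A finite union bound over $j$, together with the analogous argument for $\nu^0$, proves (i).

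For (ii) and the polynomial bound in (iv), the same comparisons are combined with tail estimates. Under (i)$'$, Markov's inequality applied to high moments gives $P\bigl[|\tilde{\theta}^0_j - \theta^{0*}_j| > T^{-1/2+\delta}\bigr] \le C_{L,\delta} T^{-L}$ for any $L,\delta > 0$. Assumption (iv) quantifies the rate of $\alpha_T$, which combined with (ii) converts these tail bounds into polynomial bounds on $\kappa^{\theta}_j$ remaining in the correct regime, and hence on $P[\hat{\calJ}^0_T \ne \calJ^0]$. The polynomial bound on $P[\check{\calJ}^0_T = \calJ^0]$ in (iv) then follows from the identity $\check{\calJ}^0_T = \hat{\calJ}^0_T$ on the event $E_T = \{\hat{\calJ}^0_T = \calJ^0\}$.

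For (iii), on $E_T$ the Step-3 admissible set coincides with that of the restricted problem defining $(\bar{\phi}_T, \bar{\theta}^1_T)$ (after the analogous selection analysis for $\hat{\calK}^0_T$), so $(\check{\theta}^0_{\calJ^1}, \check{\theta}^1_T) = (\bar{\phi}_T, \bar{\theta}^1_T)$ and $\check{\theta}^0_{\calJ^0} = 0$; combining with part (i) and Assumption \ref{P-O Asm 1} (iii), Slutsky yields (iii). For the $L^{\infty-}$-boundedness in (iv), decompose the expectation over $E_T$ and its complement: on $E_T$ the quantity equals $\sqrt{T}\bigl((\bar{\phi}_T, \bar{\theta}^1_T) - (\phi^*, \theta^{1*})\bigr)$, controlled by (v), and on the complement it is crudely bounded by $\sqrt{T}\,\mathrm{diam}(\Xi)$ and then multiplied by the polynomial decay of $P[E_T^c]$ via H\"older. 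The main obstacle is the book-keeping in part (ii): the adaptive weight $\kappa^{\theta}_j = \alpha_T|\ep_T + \tilde{\theta}^0_j|^{-\gamma}$ is itself random through $\tilde{\theta}^0_j$, and the non-convex penalty for $q<1$ precludes a simple subgradient argument, so the polynomial tails of $\tilde{\theta}^0_j$ must be propagated carefully through the random threshold $(\kappa^{\theta}_j)^{1/(2-q)}$. The possibility that $\theta^{0*}$ lies on the boundary of $\bar{\Theta}^0$ enters only passively, since the boundary asymptotics for $(\bar{\phi}_T, \bar{\theta}^1_T)$ are already absorbed into Assumption \ref{P-O Asm 1} (iii) and (v).
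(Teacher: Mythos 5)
Your reduction of the Step-2 selection problem to independent one-dimensional thresholding problems is the weak point. The objective $Q^{(q)}_T$ is indeed separable across coordinates, but the minimization is constrained to $\bar{\Theta}^0 \times \bar{\calN}^0$, and the paper only assumes $\Theta^0$ and $\calN^0$ are open convex bounded sets, not Cartesian products of intervals. For a non-product feasible set the argmin of a separable function is not obtained coordinate-by-coordinate (minimize $(x-2)^2+(y-2)^2$ over the unit disk to see this), and in particular the comparison $f_j(0) \le f_j(\theta)$ does not decide the $j$-th coordinate of the joint constrained minimizer: the point obtained from $\big(\hat{\theta}^{0,(q)},\hat{\nu}^{0,(q)}\big)$ by zeroing its $j$-th coordinate need not be feasible. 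Your one-dimensional computations themselves are fine --- comparing the threshold of order $(\kappa^{\theta}_j)^{1/(2-q)}$ with the $O_p(T^{-1/2})$ scale of $\tilde{\theta}^0_j$ is exactly the right use of $T^{(2-q)/2}b_T \to^p \infty$ and $T^{1/2}a_T = O_p(1)$ --- but you must either add the hypothesis that $\bar{\Theta}^0$ and $\bar{\calN}^0$ are boxes, or replace the decoupling by an argument that respects the joint constraint.

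The paper's proof is organized precisely to avoid this. It first derives $\sqrt{T}$-consistency of $\hat{\theta}^{0,(q)}$ from the single global comparison $Q^{(q)}_T\big(\hat{\theta}^{0,(q)},\hat{\nu}^{0,(q)}\big) \le Q^{(q)}_T\big(\theta^{0*},\hat{\nu}^{0,(q)}\big)$, which only needs feasibility of $\big(\theta^{0*},\hat{\nu}^{0,(q)}\big)$; this kills the false-zero event on $\calJ^1$ and the event $\Omega_{T,2,1}$ that some $\hat{\theta}^{0,(q)}_j$, $j\in\calJ^0$, lands near the nonzero part of $\partial\Theta^0_j$. On the remaining event $\Omega_{T,2,2}$ the minimizer is coordinate-interior, so the stationarity identity $2\big|\hat{\theta}^{0,(q)}_j-\tilde{\theta}^0_j\big| = q\kappa^{\theta}_j\big|\hat{\theta}^{0,(q)}_j\big|^{q-1}$ applies and contradicts $T^{(2-q)/2}b_T \to^p \infty$. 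This is also why your remark that the boundary of $\bar{\Theta}^0$ ``enters only passively'' is off the mark: the split into $\Omega_{T,2,1}$ and $\Omega_{T,2,2}$ exists exactly because the first-order condition is unavailable at the boundary, and handling that is the substance of the paper's extension of Suzuki--Yoshida. Your sketches of (ii)--(iv) (polynomial tails from $L^{\infty-}$-boundedness via Markov, identification of the Step-3 estimator with the oracle estimator on the good event, and a crude $\sqrt{T}\,\mathrm{diam}(\Xi)$ bound times a faster-than-polynomial probability on the complement) do match the paper's intended arguments; note only that on $\{\hat{\calJ}^0_T=\calJ^0\}$ the Step-3 feasible set still restricts $\nu^0$ to $\hat{\calN}^0_T$, so the identification with $\big(\bar{\phi}_T,\bar{\theta}^1_T\big)$ needs a word on why the extra nuisance restriction is harmless.
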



\section{Application to GEMHP}
\label{App to GEMHP}
In this section, we apply the penalized method to ordinary method (P-O) estimator to the generalized exponential marked Hawkes process (GEMHP).

For tractability, we restrict the intensity of the GEMHP to a linear form, that is, $\phi_i(u,x) = \mu_i(x) + \sum_{j=1}^d u_j$ for any $i=1,\dots,d$.
When $\phi_i$ is sub-linear, more involved formulations for the conditions [AH1]-[AH2] below are needed, and we let it aside for future works as in Section 4 of \cite{Clinet2021}.
We set a parameter $\vartheta = (\theta^0, \theta^1, \nu^0, \nu^1) \in \Xi = \bar{\Theta}^0 \times \Theta^1 \times \bar{\calN}^0 \times \calN^1$ as in Section \ref{P-O}.
For some $\vartheta^* \in \Xi$, we assume that $\bar{N}$ is a $d$-dimensional GEMHP with the following intensity process:
{\small\begin{eqnarray}
\label{App to GEMHP Eq 1}
	\hspace{-7pt}\lambda^i_t(\vartheta^*) = \mu_i(X_{t-}, \vartheta^*) + \sum_{j=1}^d \int_{[0,t) \times \bbX} \langle A_{ij}(\vartheta^*) \arrowvert e^{-sB_{ij}(\vartheta^*)}\rangle g_{ij}(x, \vartheta^*) \bar{N}^j(ds, dx),
\end{eqnarray}}
\hspace{-5pt}for $i=1, \dots, d$, where $\mu_i \colon \bbX \times \Xi \to \real_+$, $g_{ij} \colon \bbX \times \Xi \to \real_+, A_{ij} \colon  \Xi \to \real^{p \times p}, B_{ij} \colon \Xi \to \real^{p \times p}$ are measurable functions, and the real parts of the eigenvalues of $B_{ij}(\vartheta)$ are dominated by some $r > 0$ independently of $\vartheta \in \Xi$, for some $p \in \naturals$ and each $i,j = 1, \dots, d$.
We assume that there exist the Feller transition kernels of the mark process similar to (\ref{GEMHP eq 1}).
Moreover, we restrict the structure of the mark transition kernel $Q_j(x ,y, \vartheta)$.
We assume that there exists a dominating measure $\rho$ on $\bbX$ which induces the density $p_j$, i.e.,
\[
	Q_j(x ,dy, \vartheta) = p_j(x ,y, \vartheta) \rho(dy)
\]
for any $\vartheta \in \Xi$ and $j = 1, \dots, d$.
Then, we have the following representation of the compensator of $\bar{N}^i(ds, dx)$:
\[
	\nu^i(ds, dx) = \lambda^i_s(\vartheta^*) p_i(X_{s-} ,x, \vartheta^*) ds \rho(dx),
\]
where $X$ is the mark process defined as in Subsection \ref{sub Marked point process}.
We write $q^i_t(x, \vartheta) = p_i(X_{t-} ,x, \vartheta)$.
Let $T>0$ be an observation time index.
We consider the quasi log-likelihood function with respect to the above GEMHP:
{\small\begin{eqnarray}
	\label{App to GEMHP Eq 2}
	l_T(\vartheta) &=& \sum_{i=1}^d \left( \int_{[0, T] \times \bbX}   \log \big( \lambda^i_t(\vartheta)q^i_t(x, \vartheta) \big) \bar{N}^i(dt, dx)  -  \int_{[0, T] \times \bbX}  \lambda^i_t(\vartheta) q^i_t(x, \vartheta) dt \rho(dx) \right) \nonumber\\
	&=& \sum_{i=1}^d \left( \int_0^T \log \lambda^i_t(\vartheta) dN^i_t - \int_0^T \lambda^i_t(\vartheta) dt \right) + \sum_{i=1}^d \int_{[0, T] \times \bbX}  \log q^i_t(x, \vartheta) \bar{N}^i(dt, dx) \nonumber\\
	&=:& l_T^{(1)}(\vartheta) + l_T^{(2)}(\vartheta).
\end{eqnarray}}
\hspace{-5pt}Here, $l_T^{(1)}(\vartheta)$ is the part related to the counting process $N_t = \bar{N}([0,t] \times \bbX)$ and $l_T^{(2)}(\vartheta)$ to the mark process $X_t$.
We remark that $l_T(\vartheta)$ is actually equivalent to the log-likelihood function related to $\bar{N}$ if the filtration $\bbF$ satisfies the appropriate condition, see the condition (2.12) and Theorem 5.43 in \cite{JacodShiryaev2003}.
However, we call $l_T(\vartheta)$ the "quasi" log-likelihood function since we allow here to take a more general filtration $\bbF$.
We consider the P-O estimator whose first and third objective functions are set to this quasi log-likelihood function.
The quasi maximum likelihood estimator (QMLE) $\tilde{\vartheta}_T$ is defined as a quantity satisfying
\[
	\tilde{\vartheta}_T = \big( \tilde{\theta}^0_T, \tilde{\theta}^1_T, \tilde{\nu}^0_T, \tilde{\nu}^1_T \big)  \in \argmax_{\vartheta \in \Xi} l_T(\vartheta),
\]
and we set this QMLE as the first-step estimator.
Similar to as in Section \ref{P-O}, we set the second-step estimator of $(\theta^0, \nu^0)$ by
\[
	\big( \Stheta, \Snu \big) = \argmin_{ (\theta^0, \nu^0 ) \in \bar{\Theta}^0 \times \bar{\calN}^0} Q^{(q)}_T(\theta^0, \nu^0),
\]
and the third-step estimator of $\vartheta$ by
\[
	\check{\vartheta}_T = \big( \check{\theta}^0_T, \check{\theta}^1_T, \check{\nu}^0_T, \check{\nu}^1_T \big) = \argmax_{ \vartheta \in \hat{\Theta}^0_T \times \Theta^1 \times \hat{\calN}^0_T \times \calN^1} l_T(\vartheta),
\]
where $Q^{(q)}_T(\theta^0, \nu^0)$ is the same as (\ref{P-O Eq 2-1}), and $\hat{\Theta}^0_T, \hat{\calN}^0_T$ are defined in Step 3 of the P-O estimator in Section \ref{P-O}.
\begin{remark}
The results in this section also hold if we consider the quasi Bayesian estimator (QBE)
\[
	\tilde{\vartheta}_T = \frac{\int_{\Xi} \vartheta \exp(l_T(\vartheta)) p(\vartheta) d\vartheta}{\int_{\Xi} \exp(l_T(\vartheta)) p(\vartheta) d\vartheta}
\]
instead of the QMLE by changing notations in Step 1 and Step 3 in Section \ref{P-O}, where $p \colon \Xi \to \real_+$ is a continuous prior density with $0 < \inf_{\vartheta \in \Xi}p(\vartheta) < \sup_{\vartheta \in \Xi} p(\vartheta) < \infty$.
\end{remark}

We define the quasi likelihood ratio random field as
\begin{eqnarray}
\label{App to GEMHP Eq 3}
	\bbZ_T(u, \nu^*) = \exp\big\{ l_T(\theta^* + u/\sqrt{T}, \nu^*) - l_T(\vartheta^*)\big\}.
\end{eqnarray}
As mentioned in Remark \ref{P-O Rem 1}, if $\bbZ_T$ satisfies the polynomial type large deviation inequality (PLD), we can show that Assumption \ref{P-O Asm 1} (i)' and (v) holds for the above P-O estimator.
Therefore, to obtain the PLD, we prepare additional conditions [AH1]-[AH3] beside the conditions [L1]-[L2], [ND1]-[ND2].
We sometimes rewrite $\vartheta = (\theta^0, \theta^1, \nu^0, \nu^1)$ as $(\theta, \nu)$, where $\theta = (\theta^0, \theta^1) \in \Theta = \bar{\Theta}^0 \times \Theta^1$ and $\nu = (\nu^0, \nu^1) \in \calN = \bar{\calN}^0 \times \calN^1$.
We also write $\vartheta^*= (\theta^*, \nu^*) = (\theta^{0*}, \theta^{1*}, \nu^{0*}, \nu^{1*})$, where $(\theta^{0*}, \theta^{1*})$ is the true value and $(\nu^{0*}, \nu^{1*})$ is an arbitrary point in $\calN$.
We call that $f(\vartheta)$ is of class $C^i(\Xi)$ for some $i \in \naturals$ if $f(\vartheta)$ is of class $C^i(\mathring{\Xi})$ and its derivatives admit continuous extensions on $\partial \Xi$.
\begin{description}
	\item[\textnormal{[AH1]}]
		\begin{enumerate}
			\itemi  For any $x, y \in \bbX$ and $i, j = 1, \dots, d$, $\mu_i(x, \cdot)$, $g_{ij}(x, \cdot)$, $A_{ij}(\cdot)$, $B_{ij}(\cdot)$, $p_i(x, y, \cdot)$ are in $C^4(\Xi)$.
			\itemii For any $\vartheta \in \Xi$ and $i = 1, \dots, d$, $ \lambda^i_t(\vartheta)q^i_t(x, \vartheta) = 0$ if and only if $ \lambda^i_t(\vartheta^*)q^i_t(x, \vartheta^*) =0, dt\rho(dx)P(d\omega)\text{-}a.e.$
		\end{enumerate}
	\item[\textnormal{[AH2]}] For any $p>1$, $x \in \bbX$, and $i, j = 1, \dots, d$, there exists $C_p>0$ which may depend on $p$ such that
  \begin{enumerate}
			\itemi {\small $\sup_{\vartheta \in \Xi} \sum_{n=0}^3 \big[ \lvert \partial^n_{\theta} \mu_i(x, \vartheta) \rvert^p + \lvert \mu_i(x, \vartheta) 1_{\{ \mu_i(x, \vartheta) \neq 0 \}} \rvert^{-p} + \lvert \partial ^n_{\theta}g_{ij} (x, \vartheta) \rvert^p \big] \le C_p e^{\eta f_X(x)}$,}
			\itemii {\small $\sup_{\vartheta \in \Xi, \nu^* \in \calN} \sum_{n=0}^3 \int_{\bbX} \lvert \partial^n_{\theta} \log p_i (x, y, \vartheta) \rvert^p p_i (x, y, \vartheta^*) \rho(dy) \le C_p e^{\eta f_X(x)}$,}
		\end{enumerate}
		where $f_X$ is a norm-like function in [L1], and $\eta$ is a positive constant in Theorem \ref{GEMHP thm 1}.
\end{description}
The conditions [AH1] and [AH2] are sufficient conditions to leads the conditions [A1]-[A3] in Appendix \ref{Appendix for Sec4}.
[AH1] is the regularity condition for the intensity process, which leads to [A1].
[AH2] is closely related to [A2] and is used for the evaluation of moments of the quasi log-likelihood process.
Let the quasi log-likelihood random field be
\[
	\bbY_T(\vartheta, \nu^*) = \frac{1}{T} \big\{ l_T(\vartheta) - l_T(\vartheta^*) \big\}.
\]
The $V$-geometric ergodicity of the GEMHP and [AH1]-[AH2] guarantee the existence of the limit field
\[
	\bbY(\vartheta, \nu^*) = \sum_{i=1}^d E \left[ \int_{\bbX} \log \left( \frac{ f'^i_0(x, \vartheta) }{ f'^i_0(x, \vartheta^*) } \right) f'^i_0(x, \vartheta^*) - \left(f'^i_0(x, \vartheta) - f'^i_0(x, \vartheta^*)\right) \rho(dx) \right]
\]
where $f'^i_0(x, \vartheta) = \lambda'^i_0(\vartheta)q'^i_0(x, \vartheta)$ is the density of the predictable compensator of the stationary version $\bar{N}'$ at time $0$, see Lemma 4.1 in \cite{Clinet2021}.
For this $\bbY(\vartheta, \nu^*)$, we consider the following identifiability condition.
\begin{description}
	\item[\textnormal{[AH3]}] $\inf_{\theta \in \Theta-\{\theta^*\}, \nu, \nu^* \in \calN} -\frac{\bbY(\vartheta, \nu^*)}{\lvert\theta - \theta^*\rvert^2} > 0$.
\end{description}
Moreover, we write $\Gamma(\nu^*) = -\partial_{\theta}^2 \bbY(\vartheta^*, \nu^*)$.
\begin{remark}
\label{App to GEMHP Rem 1}
	$\bbY(\vartheta, \nu^*)$ is of class $C^2(\Xi)$ by the condition [A3] in Appendix.
	Then, the condition [AH3] is equivalent to the two following conditions:
	\begin{enumerate}
		\itemi $\bbY(\vartheta, \nu^*) < 0$ for any $\theta \in \Theta-\{\theta^*\}$ and $\nu, \nu^* \in \calN$.
		\itemii $\Gamma(\nu^*)$ is positive definite uniformly in $\nu^* \in \calN$.
	\end{enumerate}
\end{remark}
Let $U_T = \{u \in \real^{p_0+p_1} \arrowvert \theta^* + u/\sqrt{T} \in \Theta \}$ and $V_T(r) = \{u \in U_T \arrowvert \lvert u \rvert \ge r\}$.
The following theorem says that the quasi likelihood ratio random field $\bbZ_T$ satisfies the PLD.
\begin{theorem}[Polynomial type large deviation inequality]
\label{App to GEMHP Thm 1}
	Under [L1]-[L2], [ND1]-[ND2], and [AH1]-[AH3], for any $L>0$, there exists $C_L>0$ such that
	\[
		\sup_{r>0, T>0} P\left[ \sup_{u \in V_T(r), \nu^* \in \calN} \bbZ_T(u, \nu^*) \ge e^{-r} \right] \le \frac{C_L}{r^L},
	\]
	where $\bbZ_T(u, \nu^*)$ is defined in (\ref{App to GEMHP Eq 3}). In particular, $\sqrt{T} \big( \tilde{\theta}^0_T - \theta^{0*} \big)$ is $L^{\infty-}$-bounded.
\end{theorem}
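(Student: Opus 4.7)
The plan is to reduce the statement to the abstract polynomial-type large deviation (PLD) framework of \cite{Yoshida2011} by verifying the conditions [A1]--[A3] listed in Appendix \ref{Appendix for Sec4}. Once these are in place, Yoshida's theorem delivers the probability bound directly, and the $L^{\infty-}$-boundedness of $\sqrt{T}(\tilde{\theta}^0_T - \theta^{0*})$ is an automatic consequence: since $\tilde{\vartheta}_T$ maximizes $l_T$, one has $\bbZ_T(\hat{u}_T,\tilde{\nu}_T) \ge 1 \ge e^{-r}$ with $\hat{u}_T = \sqrt{T}(\tilde{\theta}_T - \theta^*)$, so the event $\{\lvert \hat{u}_T \rvert \ge r\}$ is contained in the supremum event of the PLD, yielding $P[\lvert \hat{u}_T \rvert \ge r] \le C_L r^{-L}$ for arbitrary $L$. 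The real work is therefore the verification of [A1]--[A3] in the presence of the nuisance coordinate $\nu^* \in \calN$ and the possibility that $\theta^{0*}$ lies on $\partial \Theta^0$.

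For the regularity and moment bounds, I would decompose $l_T = l_T^{(1)} + l_T^{(2)}$ as in (\ref{App to GEMHP Eq 2}) and expand $\partial_\theta^n l_T$ in closed form for $n = 0,\dots,3$. Condition [AH1] gives $C^4$-regularity in $\vartheta$, while [AH2] bounds the relevant integrands pointwise by the exponential envelope $C_p\,e^{\eta f_X(x)}$. Since the weight appearing in Theorem \ref{GEMHP thm 1} is $V(z) = \exp(\sum_{i,j} \langle a_{ij} \arrowvert \ep_{ij} \rangle + \eta f_X(x))$, this envelope is integrable under the $V$-geometrically ergodic law uniformly in $t$, so all moments of $\partial_\theta^n \lambda^i_t$, $\partial_\theta^n q^i_t$, and $(\lambda^i_t)^{-1}$ are controlled uniformly in $\vartheta \in \Xi$. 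Coupling these envelopes with the geometric mixing estimate of Theorem \ref{GEMHP thm 1} and applying the Burkholder--Davis--Gundy inequality to the compensated counting integrals then yields the $L^p$-bounds on the normalized derivatives of $l_T$ and on their Lipschitz constants in $\vartheta$, which together furnish [A1] and [A2].

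For the identifiability condition [A3], I would use Lemma 4.1 of \cite{Clinet2021} to identify the pointwise limit of $\bbY_T(\vartheta,\nu^*)$ as the $\bbY(\vartheta,\nu^*)$ displayed in the excerpt, then upgrade to uniform convergence over the compact set $\Xi$ via the moment bounds above together with a standard chaining argument. Assumption [AH3], combined with the $C^2$-regularity noted in Remark \ref{App to GEMHP Rem 1}, then yields the quadratic upper bound $\bbY(\vartheta,\nu^*) \le -\chi \lvert \theta - \theta^* \rvert^2$ for some $\chi > 0$ uniform in $\nu, \nu^* \in \calN$, which is exactly the nondegeneracy input required by Yoshida's PLD.

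The main obstacle, and the point of departure from \cite{Yoshida2011} and \cite{Clinet2021}, is maintaining all of the above bounds uniformly in the nuisance coordinate $\nu^* \in \calN$ and compatibly with a boundary true value. Uniformity in $\nu^*$ is secured by always taking envelopes over the full set $\Xi$; this is precisely why [AH2] and [AH3] are stated uniformly in $\vartheta$ and $\nu^*$, and it is particularly delicate for the limit field $\bbY(\vartheta,\nu^*)$, whose dependence on $\nu^*$ enters only through the density $q'^i_0$ and must be tracked carefully through the chaining step. The boundary issue is milder: the PLD proof only needs upper bounds on $\bbZ_T$ outside neighborhoods of $\theta^*$ and never invokes a critical-point equation at $\theta^*$, so passing from $\Theta^0$ to $\bar{\Theta}^0$ inside $V_T(r)$ costs nothing, and the quadratic bound from [AH3] remains valid up to and including the boundary. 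Once both points are handled, the application of Theorem 3 of \cite{Yoshida2011} completes the PLD, and the $L^{\infty-}$-conclusion follows as described in the first paragraph.
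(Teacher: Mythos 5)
Your proposal follows essentially the same route as the paper: derive [A1]--[A3] from [AH1]--[AH3] together with the $V$-geometric ergodicity and mixing of Theorem \ref{GEMHP thm 1}, verify the hypotheses of Theorem 3 in \cite{Yoshida2011} (uniformly in $\nu^*\in\calN$), and obtain the $L^{\infty-}$-boundedness of $\sqrt{T}(\tilde{\theta}^0_T-\theta^{0*})$ as the standard consequence of the PLD. The only notable difference is that where you invoke a chaining argument for uniformity in $\nu^*$, the paper uses a Sobolev embedding over $\calN$ (Lemmas \ref{Appendix Lem 2} and \ref{Appendix Lem 3}); these are interchangeable here, so the argument is sound.
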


On the other hand, the asymptotic normality of the QMLE, that is, Assumption \ref{P-O Asm 1} (iii), is shown in Corollary 4.2 of \cite{Clinet2021}.
With the help of the PLD and the asymptotic normality, we obtain the following theorem, which is the main theorem in this article.

\begin{theorem}
\label{App to GEMHP Thm 2}
	Under [L1]-[L2], [ND1]-[ND2], and [AH1]-[AH3], Assumption \ref{P-O Asm 1} are satisfied by $\bbL^1_T(\vartheta) = \bbL^2_T(\vartheta) = l_T(\vartheta)$ and $\alpha_T, \ep_T$ in Remark \ref{P-O Rem 2}.
	In particular, following statements hold:
	\begin{enumerate}
		\itemi $\sqrt{T}\big\{ ( \check{\theta}^0_{\calJ^1}, \check{\theta}^1_T ) - ( \phi^*, \theta^{1*} ) \big\} \to^d \Lambda^{-1/2}(\nu^*) \xi$ as $T \to \infty$, where $\Lambda(\nu^*) = -\partial_{\theta_{\calJ^1}}^2 \bbY(\vartheta^*, \nu^*)$ and $\xi$ is a  standard Gaussian random vector.
		\itemii $\sqrt{T} \big\{ (\check{\theta}^0_T, \check{\theta}^1_T ) - (\theta^{0*}, \theta^{1*}) \big\}$ is $L^{\infty-}$-bounded. Moreover, for all $L>0$, there exists $C_L >0$ such that
		\[
			P\left[ \check{\calJ}^0_T = \calJ^0 \right] \ge 1 - C_L T^{-L}
		\]
		for all $T>0$, where $\calJ^0 = \big\{ j = 1, \dots, p_0 \big\arrowvert \theta^{0*}_j = 0 \big\}$ and $\check{\calJ}^0_T = \big\{ j = 1, \dots, p_0 \big\arrowvert \check{\theta}^0_j = 0 \big\}$.
	\end{enumerate}
\end{theorem}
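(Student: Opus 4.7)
The plan is to verify that, with $\bbL^1_T = \bbL^2_T = l_T$ and $\alpha_T, \ep_T$ taken as in Remark \ref{P-O Rem 2}, each item of Assumption \ref{P-O Asm 1} is met; statements (i) and (ii) of the theorem then follow verbatim from Theorem \ref{P-O Thm 1} (iii) and (iv), with the identification $\Lambda(\nu^*) = -\partial_{\theta_{\calJ^1}}^2 \bbY(\vartheta^*, \nu^*)$ supplied by the asymptotic normality of the oracle QMLE.

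For items (i) and (i)$'$, I would feed the polynomial type large deviation inequality of Theorem \ref{App to GEMHP Thm 1} into the standard $L^{\infty-}$-boundedness argument: writing $\hat{u}_T = \sqrt{T}(\tilde{\theta}^0_T - \theta^{0*})$, the inclusion $\{\lvert \hat{u}_T \rvert \ge r\} \subset \{\sup_{u \in V_T(r), \nu^* \in \calN} \bbZ_T(u, \nu^*) \ge 1\}$ combined with the PLD tail $C_L r^{-L}$ yields $\sup_T E[\lvert \hat{u}_T \rvert^p] < \infty$ for every $p \ge 1$ upon integration against $r^{p-1}$. Remark \ref{P-O Rem 2} then converts (i) into the penalty-weight bounds (ii) and (iv) for the prescribed $\alpha_T, \ep_T$.

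Item (iii) is already supplied by Corollary 4.2 of \cite{Clinet2021}, applied to the oracle sub-parameterisation in which the coordinates of $\theta^0$ indexed by $\calJ^0$ are frozen at zero; this also yields the explicit form of $\Lambda(\nu^*)$ claimed in (i) of the theorem. For item (v), I would re-run the proof of Theorem \ref{App to GEMHP Thm 1} on the restricted parameter set $\Xi^{\mathrm{or}} = \{\vartheta \in \Xi : \psi = 0\}$. Since $\psi^* = 0$, the true parameter lies in $\Xi^{\mathrm{or}}$; the regularity and moment bounds [AH1]-[AH2] are inherited by restriction, and the identifiability [AH3] is inherited a fortiori because the infimum over a subset can only be larger. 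The ensuing PLD for $\bar{\bbL}^2_T$ delivers the $L^{\infty-}$-boundedness of $\sqrt{T}\{(\bar{\phi}_T, \bar{\theta}^1_T) - (\phi^*, \theta^{1*})\}$ by the same tail integration.

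The main obstacle is the verification of (v), i.e.\ ensuring that [AH1]-[AH3] transfer cleanly to the oracle sub-model and that the full PLD machinery re-runs without modification. The delicate point is that $\theta^{0*}$ may lie on the boundary of $\bar{\Theta}^0$, so the interior differentiability used in the score and Hessian expansions must be reconciled with the fact that the active block $\calJ^1$ still parametrises an interior neighbourhood of $\phi^*$. Once this bookkeeping is handled, the conclusion is routine.
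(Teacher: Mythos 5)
Your proposal is correct and follows essentially the same route as the paper: the paper's own proof is a one-line appeal to Theorem \ref{App to GEMHP Thm 1} (for (i), (i)$'$ and, via the mechanism of Remark \ref{P-O Rem 1}, (v)), Remark \ref{P-O Rem 2} (for (ii) and (iv)), and Corollary 4.2 of \cite{Clinet2021} (for (iii)), after which Theorem \ref{P-O Thm 1} delivers the two displayed conclusions. Your expansion of the tail-integration argument for $L^{\infty-}$-boundedness and of the restriction of [AH1]--[AH3] to the oracle sub-model just makes explicit what the paper delegates to Remark \ref{P-O Rem 1} and Proposition 1 of \cite{Yoshida2011}.
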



\section{Examples and Simulation Results}
\label{Ex and Sim}
In this section, we show the numerical simulation result for Theorem \ref{App to GEMHP Thm 2}.
Experiments in the scenario with no zero parameters and a comparison with previous studies are presented in Appendix \ref{Appendix Sim}.
All experiments are done by using Python3.\footnote{The code is available on the GitHub page \url{https://github.com/goda235/Sparse-estimation-for-GEMHP}.}
We consider the same notations and restrictions in Section \ref{App to GEMHP}.
Moreover, let $\Phi(x)$ be as in Section \ref{GEMHP}.
We write $Z_t = (\calE_t, X_t)$, where $X_t$ is the mark process and $\calE_t$ is the generalized elementary excitation process defined by (\ref{GEMHP Eq 1}).
The value of the GEMHP's intensity process $\lambda_t$ is updated based on the value of $Z_u$ for $u<t$ as follows:
{\small\begin{eqnarray*}
	\lambda^i_t
  = \mu_i(X_{t-}) + \sum_{j=1}^d \bigg\{\big\langle A_{ij} \big\arrowvert e^{-(t-u)B_{ij}} \mathcal{E}^{ij}_u \big\rangle + \int_{[u,t) \times \bbX} \langle A_{ij} \arrowvert e^{-sB_{ij}}\rangle g_{ij}(x) \bar{N}^j(ds, dx) \bigg\},
\end{eqnarray*}}
\hspace{-5pt}for $i = 1, \dots, d$.
Then, a path of the GEMHP can be simulated by using the above computation and Ogata's method, see \cite{Ogata1988}.
In the following subsections, we see the oracle properties of the P-O estimator for the GEMHP via numerical experiments for two basic models.
For each model, we calculate the QMLE and the P-O estimator 300 times, respectively, while changing the observation time to $T=100, 500$, and $3000$.
Here, all optimizations are done by the limited memory Broyden-Fletcher-Goldfarb-Shanno method for bound-constrained (L-BFGS-B), see \cite{ZhuEtal1997}.


\subsection{Multivariate Exponential Hawkes process}
\label{Ex and Sim 1}
First, we focus on the non-marked, however, quite important, Hawkes process.
\subsubsection{Definition}
\label{Ex and Sim 1-1}
We consider the multivariate exponential Hawkes process $N_t = (N^1_t, \dots, N^d_t)$ with the intensity
\begin{eqnarray}
	\label{Ex and Sim Eq 1-1-1}
	\lambda^i_t(\vartheta)\big\arrowvert_{\vartheta = \vartheta^*} = \mu_i + \sum_{j=1}^d \int_{[0,t)} \alpha_{ij}e^{-(t-s)\beta_{ij}} N^j(ds) \bigg\arrowvert_{\vartheta = \vartheta^*},
\end{eqnarray}
for $i = 1,\dots, d$, where $\vartheta = \big((\alpha_{ij})_{ij}, (\mu_i)_i, (\beta_{ij})_{ij} \big) \in \Xi$ is a parameter, $\vartheta^* = \big( (\alpha^*_{ij})_{ij}, (\mu^*_i)_i, (\beta^*_{ij})_{ij} \big) \in \Xi$ is the true value, and $\Xi = \Theta_{\alpha} \times \Theta_{\mu} \times \Theta_{\beta} \subset \real_+^{d^2} \times \real_{>0}^d \times \real_{>0}^{d^2}$ is an open convex bounded parameter space.
We assume the following conditions.
\begin{assumption}
\label{Ex and Sim Asm 1-1-1}
	\begin{enumerate}
		\itemi Some $\alpha^*_{ij}$ might be $0$ besides all $\mu^*_i$ and $\beta^*_{ij}$ are positive. Moreover, $\bar{\Theta}_{\beta} \subset \real^{d^2}_{>0}$.
		\itemii The spectral radius of $\Phi = \Big(\frac{\alpha^*_{ij}}{\beta^*_{ij}}\Big)_{ij}$ is less than $1$.
	\end{enumerate}
\end{assumption}
In (i), we assume that $\beta_{ij}$ are away from $0$ to control the oscillation of the nuisance parameter.
(ii) is a stability condition related to [L2].
We write $\theta^0 = (\alpha_{ij})_{ij}$, $\theta^1 = \big((\mu_i)_i, (\beta_{ij} \arrowvert i,j \ s.t. \ \alpha^*_{ij} \neq 0)\big)$, and $\nu^1 = (\beta_{ij} \arrowvert i,j \ s.t. \ \alpha^*_{ij} = 0)$.
The multivariate exponential Hawkes process has been used in various fields, for example, a model of limit order books in finance (see \cite{AbergelEtal2016}), a model of the time of posting texts on social media (see \cite{GodaEtal2021}), etc.
The directed graph whose each vertice corresponds to the value of $\mu_i^*$ and each edge corresponds to the excitability value of $\alpha_{ij}^*/\beta_{ij}^*$ is called the Hawkes graph, see \cite{EmbrechtsKirchner2018}.
The Hawkes graph allows visualizing the structure of the mutual excitations in a model.
The $\alpha^*_{ij}=0$ means that the effect of an event $j$ on the probability of the occurrence of an event $i$ is zero, i.e., the edge from $j$ to $i$ is disconnected in a Hawkes graph, see Figure \ref{Ex and Sim Fig1-1-1}.
\begin{figure}[htbp]
	\center
	\includegraphics[width=5cm]{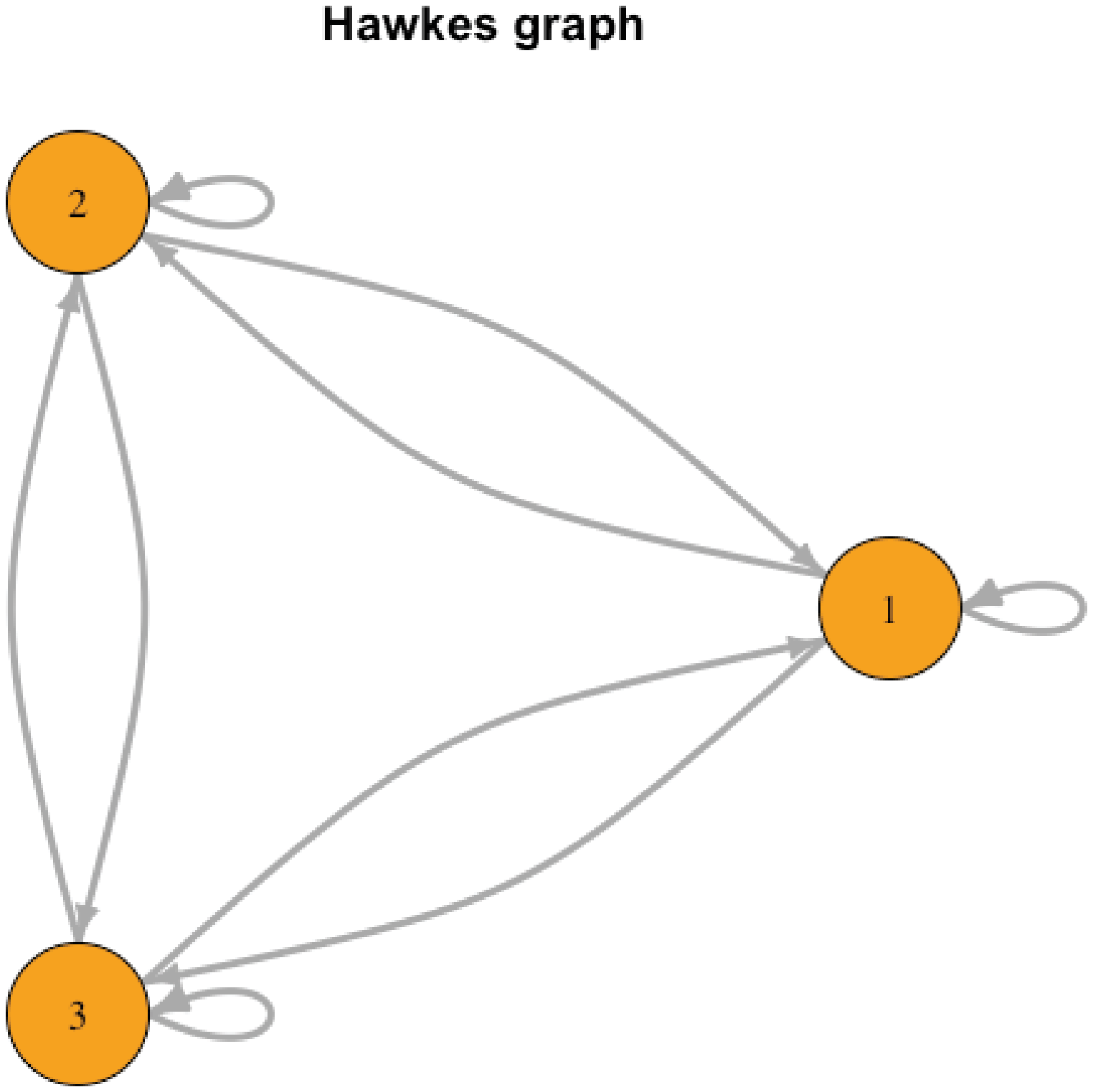}
	\includegraphics[width=5cm]{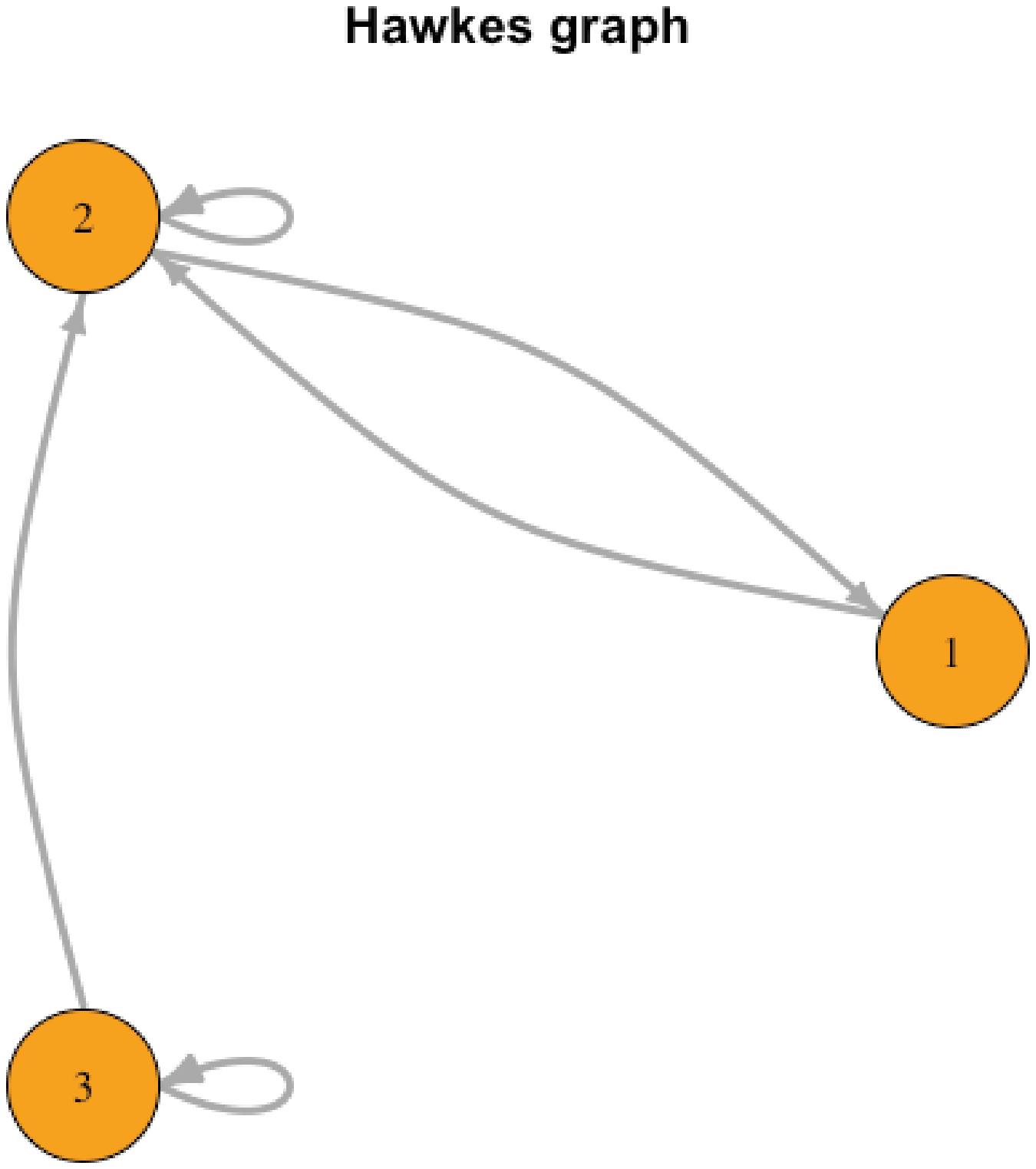}
	\caption{{\bf Examples of Hawkes graphs by the 3-dimensional multivariate exponential Hawkes process.} The left figure shows the case where all $\alpha^*_{ij}$'s are positive. The right figure shows the case where $\alpha^*_{11}=\alpha^*_{13}=\alpha^*_{31}=\alpha^*_{32}=0$.}
	\label{Ex and Sim Fig1-1-1}
\end{figure}
For the multivariate exponential Hawkes process, the oracle properties of the P-O estimation holds by Theorem \ref{App to GEMHP Thm 2}.
\begin{proposition}
\label{Ex and Sim Pro 1-1-1}
	Under Assumption \ref{Ex and Sim Asm 1-1-1}, the multivariate exponential Hawkes process with the intensity (\ref{Ex and Sim Eq 1-1-1}) satisfies the conditions \textnormal{[L1]-[L2], [ND1]-[ND2], and [AH1]-[AH3]}.
\end{proposition}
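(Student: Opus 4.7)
The plan is to embed the multivariate exponential Hawkes process into the GEMHP framework via a trivial mark space: take $\bbX=\{\ast\}$, $\rho=\delta_\ast$, $p_i\equiv 1$, $\mu_i(\ast,\vartheta)=\mu_i$, $g_{ij}(\ast,\vartheta)=1$, $p=1$, $A_{ij}(\vartheta)=\alpha_{ij}$, and $B_{ij}(\vartheta)=\beta_{ij}$. With this identification every condition reduces to a routine check, except for the identifiability hypothesis [AH3], which is the main work.

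For [L1] I would take $f_X\equiv u_X\equiv 0$: conditions (\ref{GEMHP eq 2}) and (\ref{GEMHP eq 4}) hold vacuously since no $x$ has $\lvert x\rvert\to\infty$, (\ref{GEMHP eq 3}) reads $0\le 0$, and (\ref{GEMHP eq 5}) reduces to $e^{\bar c d}<\infty$. For [L2], Assumption \ref{Ex and Sim Asm 1-1-1}(ii) gives spectral radius $r(\Phi)<1$ for $\Phi=(\alpha_{ij}^*/\beta_{ij}^*)_{ij}$, so $I-\Phi^T$ is invertible with nonnegative inverse; setting $\kappa:=(I-\Phi^T)^{-1}\mathbf 1$ one has $\kappa\ge\mathbf 1>0$ and $\Phi^T\kappa=\kappa-\mathbf 1\le(1-\delta)\kappa$ with $\delta=1/\max_i\kappa_i\in(0,1]$. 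For [ND1], $g_{ij}\equiv 1$ and $\mu_i$ is bounded below on $\bar\Theta_\mu\subset\real^d_{>0}$ (implicit in the set-up together with Assumption \ref{Ex and Sim Asm 1-1-1}(i), which also pins $\bar\Theta_\beta\subset\real^{d^2}_{>0}$). For [ND2], $Q_j(\ast,\cdot)=\delta_\ast$ has the reachable point $\ast$ and admits the sub-component $\calT_j=Q_j$ via $r_j\equiv 1$, $\sigma_j=\delta_\ast$. For [AH1](i), all parameter maps are $C^\infty$ in $\vartheta$ (linear or constant), and [AH1](ii) is vacuous since $\lambda^i_t(\vartheta)q^i_t(x,\vartheta)=\lambda^i_t(\vartheta)\ge\mu_i>0$ everywhere. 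For [AH2], $\bar\Xi$ is compact with $\mu_i$ and $\beta_{ij}$ bounded away from $0$, so derivatives up to order $3$ and the reciprocal $\mu_i^{-1}$ are uniformly bounded; since $f_X\equiv 0$ the factor $e^{\eta f_X(x)}=1$, and the stated bounds follow with suitable constants $C_p$.

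The main obstacle is [AH3]. By the inequality $x-y-y\log(x/y)\ge 0$ with equality iff $x=y$, the limit field $\bbY(\vartheta,\nu^*)\le 0$, with equality iff the c\`adl\`ag process $D^i_t:=\lambda'^i_t(\vartheta)-\lambda'^i_t(\vartheta^*)$ vanishes identically, almost surely, for each $i$. At each jump of $N^j$ in the stationary path, $D^i$ has jump of size $\alpha_{ij}-\alpha^*_{ij}$; since every coordinate of the stationary process has infinitely many jumps (by [ND1] and $\mu_j^*>0$), this forces $\alpha_{ij}=\alpha^*_{ij}$ for all $i,j$. The residual identity
\[
(\mu_i-\mu_i^*)+\sum_{j:\alpha_{ij}^*\ne 0}\alpha_{ij}^*\sum_{T^j_n<t}\bigl(e^{-(t-T^j_n)\beta_{ij}}-e^{-(t-T^j_n)\beta_{ij}^*}\bigr)=0
\]
must then hold for all $t$. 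Between consecutive jumps this is a finite linear combination of the functions $t\mapsto e^{-t\beta}$ ($\beta\in\{\beta_{ij},\beta_{ij}^*\}_{j:\alpha_{ij}^*\ne 0}$) equal to a constant; linear independence of distinct exponentials plus a constant, together with the positivity of the $\omega$-dependent coefficients $\sum_n e^{T^j_n\beta}$, forces $\mu_i=\mu_i^*$ and $\beta_{ij}=\beta_{ij}^*$ whenever $\alpha_{ij}^*\ne 0$, while $\beta_{ij}$ remains free when $\alpha_{ij}^*=0$ (the nuisance direction). Hence $\bbY(\vartheta,\nu^*)=0$ iff $\theta=\theta^*$, and the dependence on $\nu^*$ is trivial because the law of $\bar N$ under $\vartheta^*$ is insensitive to undefined nuisance components. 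Positive definiteness of $\Gamma(\nu^*)=-\partial_\theta^2\bbY(\vartheta^*,\nu^*)$ follows by the analogous linear independence at the first-derivative level applied to the Fisher-type quadratic form $u^\top\Gamma(\nu^*)u=\sum_iE[\lambda'^i_0(\vartheta^*)^{-1}(\partial_\theta\lambda'^i_0(\vartheta^*)\cdot u)^2]$, with uniformity over $\nu^*\in\bar\calN$ by compactness. The strict uniform bound $-\bbY(\vartheta,\nu^*)\ge c\lvert\theta-\theta^*\rvert^2$ is then extracted by the standard compactness argument on $\bar\Theta\times\bar\calN\times\bar\calN$, combining the Hessian lower bound near $\theta^*$ with continuity and strict negativity away from $\theta^*$.
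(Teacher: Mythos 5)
Your proposal is correct and follows essentially the same route as the paper: embed the process into the GEMHP framework with a degenerate mark space (the paper takes $\bbX=\real$, $Q_j(x,dy)=\delta_0(dy)$, $g_{ij}\equiv 1$, $f_X(x)=\lvert x\rvert$ rather than your singleton with $f_X\equiv 0$, but this is immaterial), verify [L1]--[L2], [ND1]--[ND2], [AH1]--[AH2] by routine computation, and establish [AH3] via the jump-size and linear-independence-of-exponentials argument, which is exactly the content of Lemma A.7 of Clinet--Yoshida that the paper cites. You also correctly identify the one genuinely delicate point, namely that Assumption \ref{Ex and Sim Asm 1-1-1} (i) ($\bar{\Theta}_{\beta}\subset\real^{d^2}_{>0}$) is what lets the infimum of $\bby^T\Gamma(\nu^*)\bby$ over $\nu^*$ be realized at a positive nuisance value so that the non-degeneracy argument applies there, which is precisely the remark the paper singles out.
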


\subsubsection{Simulation Results}
\label{Ex and Sim 1-2}

Let $N_t = (N^1_t ,N^2_t, N^3_t)$ be a $3$-dimensional exponential Hawkes process with the following parameters:
\[
	\mu^* = (0.2, 0.1, 0.1), \quad
	\alpha^* = \left(
		\begin{array}{ccc}
			0.0& 0.2& 0.0\\
			0.2& 0.1& 0.4\\
			0.0& 0.0& 0.2
		\end{array}
		\right), \quad
	\beta^* = \left(
		\begin{array}{ccc}
			*& 0.9& *\\
	 		0.5& 1.2& 0.6\\
	 		*& *& 0.7
		\end{array}
		\right),
\]
where $*$ means a non-definite value.
We set the hyperparameters of the P-O estimator in Remark \ref{P-O Rem 2} to be $q=1.0, \gamma=1.0, a=0.5$, the observation times $T = 100, 500, 3000$, and the number of the Monte Carlo simulation $MC = 300$.
Table \ref{Ex and Sim Table1-2-1} shows the fraction of trials in which the parameter $\alpha_{ij}$'s are estimated to be completely zero.
We can see that the variable selection is performed more accurately by the P-O estimator than by the QMLE as $T$ becomes larger.

\begin{table}[htbp]
  \centering
	\caption{{\bf Percentage of estimated to be zero.}}
	\label{Ex and Sim Table1-2-1}
  \scalebox{0.85}{
	\begin{tabular}{| l | l | l | l | l | l |} \hline
		\multicolumn{6}{| l |}{QMLE: T=100 } \\ \hline
		$\alpha_{11}$ & {\bf 71.7\%} & $\alpha_{12}$ & 11.7\% & $\alpha_{13}$ &  {\bf 52.7\%} \\ \hline
		$\alpha_{21}$ & 5.67\% & $\alpha_{22}$ & 51.3\% & $\alpha_{23}$ & 5.00\% \\ \hline
		$\alpha_{31}$ & {\bf 60.0\%} & $\alpha_{32}$ & {\bf 58.3\%} & $\alpha_{33}$ & 26.7\% \\ \hline
		\multicolumn{6}{| l |}{QMLE: T=500} \\ \hline
		$\alpha_{11}$ & {\bf 62.7\%} & $\alpha_{12}$ & 0.00\% & $\alpha_{13}$ &  {\bf 54.0\%} \\ \hline
		$\alpha_{21}$ & 0.00\% & $\alpha_{22}$ & 11.3\% & $\alpha_{23}$ & 0.00\% \\ \hline
		$\alpha_{31}$ & {\bf 58.7\%} & $\alpha_{32}$ & {\bf 54.7\%} & $\alpha_{33}$ & 0.00\% \\ \hline
		\multicolumn{6}{| l |}{QMLE: T=3000} \\ \hline
		$\alpha_{11}$ & {\bf 58.0\%} & $\alpha_{12}$ & 0.00\% & $\alpha_{13}$ &  {\bf 50.0\%} \\ \hline
		$\alpha_{21}$ & 0.00\% & $\alpha_{22}$ & 0.00\% & $\alpha_{23}$ & 0.00\% \\ \hline
		$\alpha_{31}$ & {\bf 54.3\%} & $\alpha_{32}$ & {\bf 57.0\%} & $\alpha_{33}$ & 0.00\% \\ \hline
	\end{tabular}
	\begin{tabular}{| l | l | l | l | l | l |} \hline
		\multicolumn{6}{| l |}{P-OE: T=100 } \\ \hline
		$\alpha_{11}$ & {\bf 84.3\%} & $\alpha_{12}$ & 30.7\% & $\alpha_{13}$ &  {\bf 66.3\%} \\ \hline
		$\alpha_{21}$ & 27.3\% & $\alpha_{22}$ & 67.3\% & $\alpha_{23}$ & 11.0\% \\ \hline
		$\alpha_{31}$ & {\bf 80.3\%} & $\alpha_{32}$ & {\bf 82.0\%} & $\alpha_{33}$ & 42.0\% \\ \hline
		\multicolumn{6}{| l |}{P-OE: T=500 } \\ \hline
		$\alpha_{11}$ & {\bf 83.3\%} & $\alpha_{12}$ & 3.67\% & $\alpha_{13}$ &  {\bf 72.7\%} \\ \hline
		$\alpha_{21}$ & 1.67\% & $\alpha_{22}$ & 34.0\% & $\alpha_{23}$ & 0.00\% \\ \hline
		$\alpha_{31}$ & {\bf 84.7\%} & $\alpha_{32}$ & {\bf 83.7\%} & $\alpha_{33}$ & 6.33\% \\ \hline
		\multicolumn{6}{| l |}{P-OE: T=3000} \\ \hline
		$\alpha_{11}$ & {\bf 87.7\%} & $\alpha_{12}$ & 0.00\% & $\alpha_{13}$ &  {\bf 79.0\%} \\ \hline
		$\alpha_{21}$ & 0.00\% & $\alpha_{22}$ & 3.33\% & $\alpha_{23}$ & 0.00\% \\ \hline
		$\alpha_{31}$ &  {\bf 87.3\%} & $\alpha_{32}$ & {\bf 88.7\%} & $\alpha_{33}$ & 0.00\% \\ \hline
	\end{tabular}
  }
\end{table}

\begin{remark}
\label{Ex and Sim Rem 1-2-1}
	In Table \ref{Ex and Sim Table1-2-1}, the QMLE asymptotically correctly estimates about 50\% of the zero parameters.
	This phenomenon is derived from the local asymptotic normality on the restricted parameter space to a positive region, see \cite{GodaEtal2021} for an intuitive but more detailed explanation.
	There are few studies of the maximum likelihood method on the constrained parameter space using as a variable selection method.
	In an i.i.d. case, the asymptotic behavior of the maximum likelihood estimator is discussed under general assumptions where we can consider such a constrained parameter space, see \cite{LeCam1970}.
	The same phenomenon is observed in Table \ref{Ex and Sim Table2-2-1} below.
\end{remark}

Figure \ref{Ex and Sim Fig1-2-1} shows histograms of the error distribution of the P-O estimator, that is, histograms of the values of $\sqrt{T} (\check{\vartheta}_T - \vartheta^*)$.
It seems that the distribution is close to the normal distribution as $T$ becomes larger.
\begin{figure}[htbp]
	\centering
	\includegraphics[width=11.5cm]{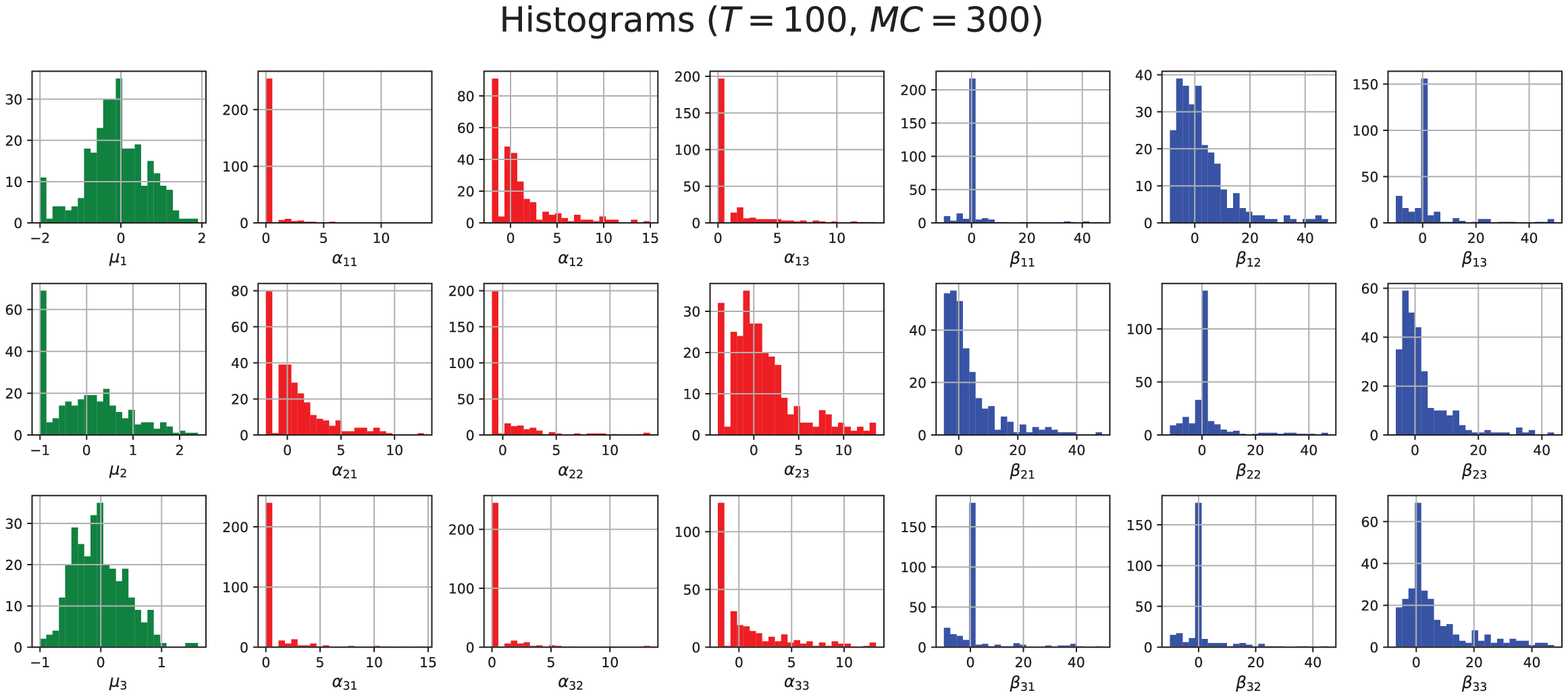}\\
  \vspace{-5mm}
	\includegraphics[width=11.5cm]{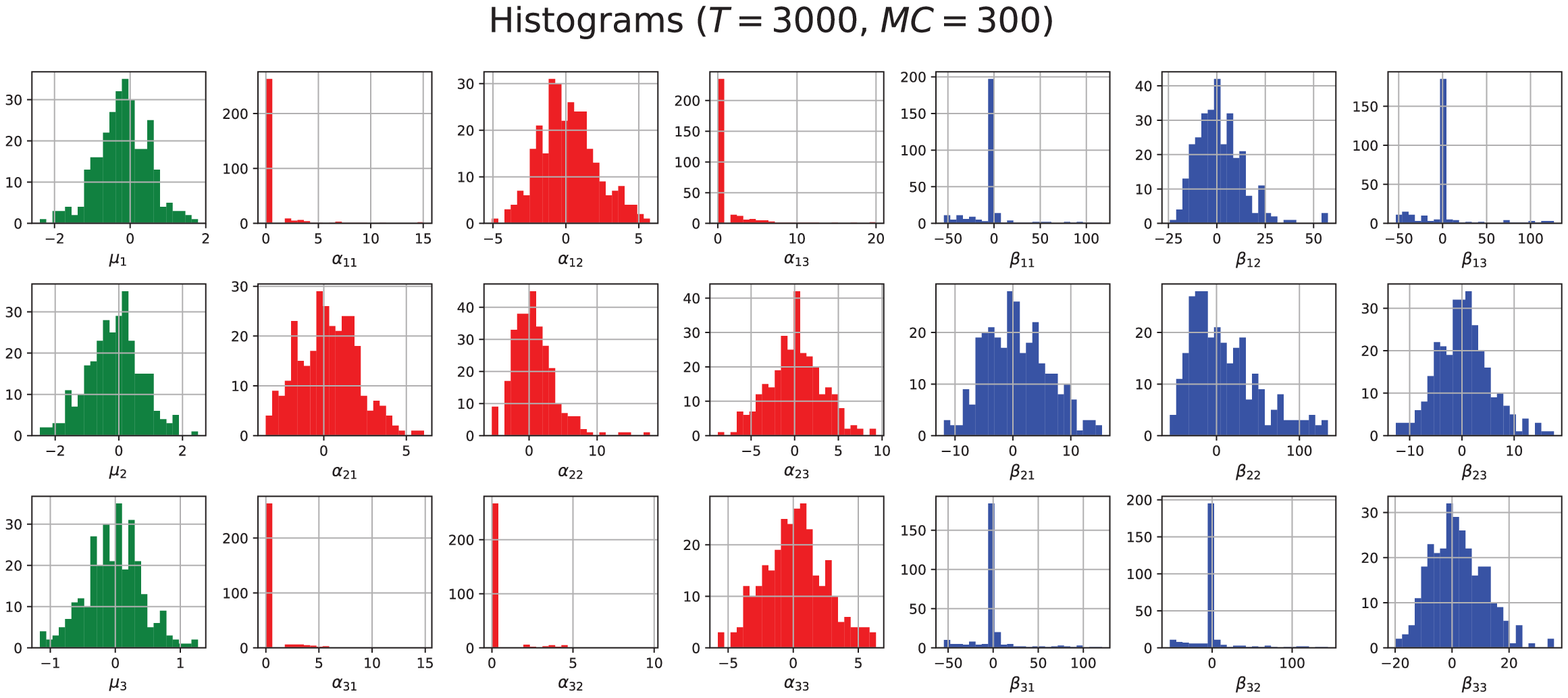}
	\caption{{\bf Histograms of the error of the P-O estimator $\sqrt{T}(\check{\vartheta}_T - \vartheta^*)$.}}
	\label{Ex and Sim Fig1-2-1}
\end{figure}

Table \ref{Ex and Sim Table1-2-2} shows the averages of squared errors of the QMLE $(\tilde{\vartheta}_T - \vartheta^*)^2$ and the P-O estimator $(\check{\vartheta}_T - \vartheta^*)^2$.
For non-zero parameters, owing to the asymptotic normality, both the QMLE and the P-O estimator have asymptotically the same level of variance.
For zero parameters, we can guess that the P-O estimator asymptotically has a smaller error than the QMLE, due to the accurate model selection.
However, when the observation time is small, the performance of the QMLE is better due to the miss model selection of the P-O estimator.

\begin{table}[htbp]
	{\footnotesize
 	\caption{{\bf Average of squared errors.}}
 	\label{Ex and Sim Table1-2-2}
  \begin{tabular}{ll | lll lll}
		\hline
   	$T$ & Method & $\mu_1$ & $\mu_2$ & $\mu_3$ & $\alpha_{11}$ & $\alpha_{12}$ & $\alpha_{13}$ \\ \hline\hline
   	100 & QMLE & 6.80e-03 & 5.63e-03 & 3.09e-03 & {\bf 5.90e-01} & 1.15e-00 & {\bf 4.89e-00} \\
   	& P-OE & 5.79e-03 & 6.73e-03 & 1.80e-03 & {\bf 6.34e-01} & 1.19e-00 & {\bf 5.01e-00} \\
   	500 & QMLE & 1.58e-03 & 1.81e-03 & 8.06e-04 & {\bf 1.61e-02} & 1.41e-02 & {\bf 1.03e-01} \\
   	& P-OE & 1.24e-03 & 1.80e-03 & 3.62e-04 & {\bf 2.16e-02} & 1.39e-02 & {\bf 9.51e-02} \\
   	3000 & QMLE & 2.59e-04 & 2.44e-04 & 1.84e-04 & {\bf 1.81e-03} & 1.28e-03 & {\bf 4.19e-03} \\
 	 	& P-OE & 1.68e-04 & 2.43e-04 & 6.17e-05 & {\bf 1.72e-03} & 1.26e-03 & {\bf 4.76e-03} \\
   	\hline
 	\end{tabular}
 	\vspace{2mm}

 	\begin{tabular}{ll | lll lll}
	 	\hline
   	$T$ & Method & $\alpha_{21}$ & $\alpha_{22}$ & $\alpha_{23}$ & $\alpha_{31}$ & $\alpha_{32}$ & $\alpha_{33}$ \\ \hline\hline
   	100 & QMLE & 3.78e-01 & 1.38e-00 & 1.47e-00 & {\bf 1.66e-00} & {\bf 5.75e-01} & 4.73e-01 \\
   	& P-OE & 4.89e-01 & 1.51e-00 & 1.77e-00 & {\bf 1.77e-00} & {\bf 1.06e-00} & 6.64e-01 \\
   	500 & QMLE & 9.07e-03 & 1.01e-01 & 2.26e-02 & {\bf 8.13e-03} & {\bf 5.51e-02} & 1.87e-02 \\
   	& P-OE & 9.26e-03 & 1.02e-01 & 2.20e-02 & {\bf 8.17e-03} & {\bf 5.50e-02} & 1.86e-02 \\
   	3000 & QMLE & 1.09e-03 & 3.81e-03 & 3.15e-03 & {\bf 1.07e-03} & {\bf 1.82e-03} & 1.95e-03 \\
 	 	& P-OE & 1.09e-03 & 3.96e-03 & 3.13e-03 & {\bf 1.03e-03} & {\bf 1.77e-03} & 1.91e-03 \\
   	\hline
 	\end{tabular}
 	\vspace{2mm}

 	\begin{tabular}{ll | lll lll}
	 	\hline
   	$T$ & Method & $\beta_{11}$ & $\beta_{12}$ & $\beta_{13}$ & $\beta_{21}$ & $\beta_{22}$ & $\beta_{23}$ \\ \hline\hline
   	100 & QMLE & * & 4.07e+02 & * & 6.88e+01& 2.77e+02 & 5.97e+01 \\
   	& P-OE & * & 4.17e+02 & * & 7.79e+01& 2.81e+02 & 8.79e+01 \\
   	500 & QMLE & * & 1.70e-00 & * & 1.84e-01 & 1.37e+01& 1.30e-01 \\
   	& P-OE & * & 1.51e-00 & * & 1.67e-01 & 1.36e+01& 1.38e-01 \\
   	3000 & QMLE & * & 5.67e-02 & * & 1.04e-02 & 6.99e-00 & 8.98e-03 \\
 	 	& P-OE & * & 5.13e-02 & * & 1.01e-02 & 6.94e-00 & 8.88e-03 \\
   	\hline
 	\end{tabular}
	\vspace{2mm}

 	\begin{tabular}{ll | lll}
	 	\hline
   	$T$ & Method & $\beta_{31}$ & $\beta_{32}$ & $\beta_{33}$ \\ \hline\hline
   	100 & QMLE & * & * & 4.66e+01 \\
   	& P-OE & * & * & 6.24e+01 \\
   	500 & QMLE & * & * & 1.76e-00 \\
   	& P-OE & * & * & 1.65e-00 \\
   	3000 & QMLE & * & * & 3.58e-02 \\
 	 	& P-OE & * & * & 3.14e-02 \\
   	\hline
 	\end{tabular}}
\end{table}

\subsection{Hawkes process marked with "Topic"}
\label{Ex and Sim 2}
Second, we introduce the marked Hawkes process useful in the field of natural language processing.
\subsubsection{Definition}
\label{Ex and Sim 2-1}
We consider a web service where $d$ types of users post texts while reading each other's posts, like social network services such as Twitter and Facebook, product reviews on Amazon, and so on.
In this subsection, we model a sequence of posting times $(T^i_n)_{i= 1, \dots, d, n \in \naturals}$ by using a GEMHP.
Since we can consider the distribution of future posting times naturally depends on the content of the previous posts, we will regard the content of texts as marks.

Techniques to quantify the amount of "topic" in a sentence have been studied in the field of natural language processing.
For example, Latent Dirichlet Allocation (LDA) is a hierarchical Bayesian model in which a sentence $w = (w_1, \dots, w_N)$ consisting of $N$ words is generated by a conditional multinomial distribution given a "topic" $z$, see \cite{DavidEtal2003}.
The topic $z$ in the LDA model is a $\{1, \dots, d'\}$-valued random variable (where $d'$ is the number of topics), and its distribution is a conditional multinomial distribution whose parameters are generated by the Dirichlet distribution.
Conversely, we can consider the conditional probabilities $\big( p(z=l \arrowvert w) \big)_{l=1,\dots,d'}$, and it can be assumed as a proportion of each topic in a sentence $w$.

For $i = 1, \dots, d$ and $n \in \naturals$, let $w^i_n = (w^i_{n,1}, \dots, w^i_{n,M_n})$ be the $n$-th post by the $i$-th user, where $M_n$ is the number of words in a post $w^i_n$.
We assume that the conditional probabilities $\big(p(z=l \arrowvert w=w^i_n)\big)_{l,i,n}$ are given for a $\{1, \dots, d'\}$-valued random topic $z$.
Then, we regard $(p(z=1 \arrowvert w=w^i_n), \dots, p(z=d' \arrowvert w=w^i_n))$ as a mark $X^i_n$.

Now, we consider the model for the above sequence of a couple $(T^i_n, X^i_n)_{i,n}$.
Let $\bar{N}$ be the GEMHP with the intensity
\begin{eqnarray}
\label{Ex and Sim Eq 2-1-1}
	\lambda^i_t(\vartheta^*) = \mu_i + \sum_{j=1}^d \int_{[0,t) \times \mathbb{X}} e^{-(t-s)\beta_{ij}}\Bigg( \sum_{l=1}^{d'} m_{ijl}x_{l} \Bigg) \bar{N}^j(ds, dx) \bigg\arrowvert_{\vartheta = \vartheta^*},
\end{eqnarray}
for $i = 1, \dots, d$, and suppose that its mark process takes values on the $(d'-1)$-simplex\footnote{The $(d'-1)$-simplex is the set $\big\{ x= (x^1, \dots, x^{d'}) \in \real_+^{d'} \big\arrowvert \sum_{i=1}^{d'}x^i = 1 \big\}$.} and has the transition kernel
\[
	Q_j(x ,dy, \theta_M) = p_j(x ,y, \theta_M) dy,
\]
where $\vartheta = \big( (m_{ijl})_{ijl}, (\mu_i)_i, (\beta_{ij})_{ij}, \theta_M \big) \in \Xi$
is a parameter, $\vartheta^*= \big((m^*_{ijl})_{ijl},$ $(\mu^*_i)_i, (\beta^*_{ij})_{ij}, \theta_M^*\big) \in \Xi$ is the true value, and $\Xi = \Theta_m \times \Theta_{\mu} \times \Theta_{\beta} \times \Theta_M \subset \real_+^{d^2 \times d'} \times \real_{>0}^{d} \times \real_{>0}^{d^2} \times \real^{d''}$ is an open convex bounded parameter space.
For a vector $x = (x^1, \dots ,x^k) \in \real^k$, we write $x^{\otimes 2}$ as a tensor $(x^ix^j)_{i,j = 1, \dots, k} \in \real^{k \times k}$.
Recall that we write $q^i_t(x, \theta_M) = p_i(X_{t-} ,x, \theta_M)$.
When we have the geometric ergodicity of the above model, we can consider the stationary version of $q^i_t(x, \theta_M)$ and write it as $q'^{i}_t(x, \theta_M)$.
The following assumptions are sufficient conditions for [L1]-[L2], [ND1]-[ND2], and [AH1]-[AH3].
\begin{assumption}
\label{Ex and Sim Asm 2-1-1}
	\begin{enumerate}
		\itemi Some $m^*_{ijl}$ might be $0$ besides all $\mu^*_i$ and $\beta^*_{ij}$ are positive. Moreover, $\bar{\Theta}_{\beta} \subset \real^{d^2}_{>0}$.
		\itemii The spectral radius of $\Phi(x) = \Big(\frac{G_{ij}(x)}{\beta_{ij}}\Big)_{ij}$ is less than $1$ uniformly in $x \in \bbX$, where
		\[
			G_{ij}(x) = \int_{\bbX} \Bigg( \sum_{l=1}^{d'} m^*_{ijl}y_{l} \Bigg) p_j(x, y, \theta_M^*) dy.
		\]
		\itemiii The transition kernel $Q$ admits a reachable point $x_0 \in \bbX$. Moreover, there exists a lower semi-continuous function $r_j \colon \bbX^2 \to \real_+$ such that $Q_j$ admits a sub-component $\calT_j$ with $\calT_j(x, \bbX) > 0$ and $\calT_j(x, F) = \int_{F} r_j(x,y)dy$ for any $x \in \bbX$ and $F \in \calB(\bbX)$.
		\itemiv For any $l = 1, \dots, d'$, the mark process $X^l$ is not almost surely constant. Moreover, for any $l_1, l_2 = 1, \dots, d'$, $X^{l_1}$ and $X^{l_2}$ are distinguishable\footnote{That is, $P\big[X_t^{l_1} = X_t^{l_2} \text{ for all $t>0$}\big] < 1$.}.
		\itemv For any $p>1$, $x \in \bbX$, and $i= 1, \dots, d$,
    \[
      \sup_{\theta_M \in \Theta_M} \sum_{n=0}^3 \int_{\bbX} \lvert \partial^n_{\theta_M} \log p_i (x, y, \theta_M) \rvert^p p_i (x, y, \theta_M^*) \rho(dy) \le C_p e^{\eta f_X(x)}
    \]
    holds, where $f_X$ is a norm-like function in [L1], and $\eta$ is a positive constant in Theorem \ref{GEMHP thm 1}.
		Moreover, for any $t \ge 0$ and $i = 1, \dots, d$, the transition densities $q'^{i}_t(x, \theta_M)$'s satisfy the following conditions.
			\begin{enumerate}
        \renewcommand{\labelenumiii}{\alph{enumiii}).}
				\itemi If there exists $\theta_M \in \Theta_M$ such that $q'^{i}_t(x, \theta_M) = q'^{i}_t(x, \theta_M^*), \ dxP(d\omega)\text{-}a.e.$, then $\theta_M = \theta_M^*$.
				\itemii If there exists $\bbx \in \real^{d''}$ such that $\bbx^T\partial_{\theta_M}\log q'^{i}_t(x, \theta_M^*) = 0$, $dxP(d\omega)\text{-}a.e.$, then $\bbx = 0$.
				\itemiii Almost surely,
        {\small\begin{eqnarray*}
          \int_{\bbX} \big( \partial_{\theta_M} \log q'^{i}_t(x, \theta_M^*) \big)^{\otimes 2} q'^{i}_t(x, \theta_M^*)dx = -\int_{\bbX} \partial_{\theta_M}^2 \log q'^{i}_t(x, \theta_M^*)q'^{i}_t(x, \theta_M^*)dx.
        \end{eqnarray*}}
			\end{enumerate}
	\end{enumerate}
\end{assumption}
\begin{remark}
\label{Ex and Sim Rem 2-1-2}
	We can prove that the above GEMHP satisfies the conditions \textnormal{[L1]-[L2], [ND1]-[ND2]} without Assumption \ref{Ex and Sim Asm 2-1-1} (v).
	In other words, Theorem \ref{GEMHP thm 1} holds without Assumption \ref{Ex and Sim Asm 2-1-1} (v).
	Then, the statement of Assumption \ref{Ex and Sim Asm 2-1-1} (v) has a meaning under the other assumptions.
	See the proof of Proposition \ref{Ex and Sim Pro 2-1-1}.
\end{remark}
(i) is a constraint on the parameters, in particular, requiring $\beta_{ij}$ to be away from $0$ to control the oscillation of the nuisance parameter.
(ii) and (iii) are assumptions for the sake of [L2] and [ND2], respectively.
(iv) ensures the identifiability of the parameters.
(v) is an assumption related to the probability density of marks to guarantee [AH2] and [AH3].
For this model, the oracle properties of the P-O estimation holds by Theorem \ref{App to GEMHP Thm 2}.
\begin{proposition}
\label{Ex and Sim Pro 2-1-1}
	Under Assumption \ref{Ex and Sim Asm 2-1-1}, the GEMHP with the intensity (\ref{Ex and Sim Eq 2-1-1}) satisfies the conditions \textnormal{[L1]-[L2], [ND1]-[ND2], and [AH1]-[AH3]}.
\end{proposition}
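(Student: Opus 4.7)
The plan is to verify each of [L1]--[L2], [ND1]--[ND2] and [AH1]--[AH3] for the intensity (\ref{Ex and Sim Eq 2-1-1}), exploiting two simplifying features of the model: the mark space $\bbX$ is the compact $(d'-1)$-simplex, so every continuous function of $x$ is bounded and any norm-like $f_X$ may be chosen as a bounded function on $\bbX$; and the temporal kernel reduces to the scalar exponential $e^{-(t-s)\beta_{ij}}$, so $A_{ij}\equiv 1$, $B_{ij}=\beta_{ij}$ and derivatives in the temporal parameters are explicit. I would structure the proof in three blocks, by increasing difficulty.

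First I would dispense with [L1]--[L2] and [ND1]--[ND2], which follow almost directly from Assumption \ref{Ex and Sim Asm 2-1-1}(i)--(iii). For [L1], taking $f_X$ bounded on the simplex and $u_X$ a sufficiently large constant reduces (\ref{GEMHP eq 2})--(\ref{GEMHP eq 4}) to finite-constant inequalities using $\mu_i^*>0$ from (i); (\ref{GEMHP eq 5}) then holds for any $\bar c>0$ thanks to boundedness of $g_{ij}$ on the simplex. [L2] is immediate from (ii) via a Perron--Frobenius argument applied to $\sup_{x\in\bbX}\Phi(x)$, producing $\kappa$ and $\rho\in[0,1)$ with $\Phi(x)^T\kappa\le\rho\kappa$ uniformly in $x$. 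For [ND1], (i) gives $\phi_i\ge\min_i\mu_i^*>0$; at indices $(i,j)$ with a nonvanishing contribution, the simplex constraint $\sum_l x_l=1$ forces $g_{ij}(x)\ge\min\{m_{ijl}^*:m_{ijl}^*>0\}>0$. [ND2] is precisely Assumption (iii).

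Next, the likelihood-side regularity conditions: [AH1](i) is clear since $\mu_i$, $g_{ij}$, $A_{ij}$, $B_{ij}$ are polynomial in $\vartheta$ and $p_j$ admits four continuous derivatives by Assumption (v); [AH1](ii) follows from linearity of the intensity together with strict positivity of $\mu_i$ and $\beta_{ij}$ throughout $\Xi$. For [AH2](i) all relevant quantities and their $\theta$-derivatives are continuous in $\vartheta$ on the compact $\bar{\Xi}$ and are linear functions of the bounded mark, yielding uniform upper bounds (and, where the inverse appears, strictly positive lower bounds) which are absorbed into $C_p e^{\eta f_X(x)}$ since $f_X$ is bounded on $\bbX$. [AH2](ii) is the statement of Assumption (v) with $f_X$ from [L1].

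The main obstacle is [AH3]. By Remark \ref{App to GEMHP Rem 1} it suffices to show $\bbY(\vartheta,\nu^*)<0$ for $\theta\neq\theta^*$, positive-definiteness of $\Gamma(\nu^*)$ uniformly in $\nu^*$, and then to upgrade this to the divided-difference form. I would decompose $\bbY=\bbY^{(1)}+\bbY^{(2)}$ according to the split $l_T=l_T^{(1)}+l_T^{(2)}$; each summand is non-positive (a conditional relative entropy) and vanishes only when the corresponding integrands agree almost everywhere. Vanishing of $\bbY^{(2)}$ forces $q'^{i}_t(\cdot,\theta_M)=q'^{i}_t(\cdot,\theta_M^*)$, $dxP(d\omega)$-a.e., which by Assumption (v)(a) gives $\theta_M=\theta_M^*$. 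Vanishing of $\bbY^{(1)}$ given $\theta_M=\theta_M^*$ yields an identity $\mu_i-\mu_i^*+\sum_{j,l}\int_0^\infty\bigl(e^{-s\beta_{ij}}m_{ijl}-e^{-s\beta_{ij}^*}m_{ijl}^*\bigr)X_{t-s}^l\,dN'^j_{t-s}=0$ for all $t$, whose triviality in $(\mu,\beta,m)$ is extracted from the distinguishability and non-constancy of the marks $X^l$ in Assumption (iv) and from the classical identifiability of exponential Hawkes kernels as in Lemma 4.1 of \cite{Clinet2021}. Positive-definiteness of $\Gamma(\nu^*)$ follows by second-order expansion together with the same linear-independence arguments and Assumption (v)(b)-(c) for the $\theta_M$-block; uniformity in $\nu^*\in\bar{\calN}^0\times\calN^1$ is obtained from continuity of $\Gamma$ in $\nu^*$ and compactness of $\bar{\calN}$. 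The divided-difference bound in [AH3] is then deduced by combining pointwise strict negativity on the compact set $\Theta\setminus B_\ep(\theta^*)$ with the local quadratic lower bound from positive-definiteness of $\Gamma$. The technically most delicate step will be handling mark identifiability under the constraint $\sum_l x_l=1$ (so that the $m_{ijl}$ coordinates are separated despite their linear dependence on the simplex) and carrying the uniformity in $\nu^*$ through every step.
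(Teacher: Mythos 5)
Your overall route coincides with the paper's: compactness of the simplex to trivialize [L1] and the moment bounds in [AH2], Perron--Frobenius for [L2], [ND2] read off from Assumption \ref{Ex and Sim Asm 2-1-1}(iii), and for [AH3] the reduction via Remark \ref{App to GEMHP Rem 1} to negativity of $\bbY$ off $\theta^*$ plus uniform positive definiteness of $\Gamma(\nu^*)$, each split into an intensity block (handled through the jump structure of the stationary process and Assumption (iv)) and a mark block (handled through Assumption (v-a), (v-b), (v-c)). The paper is more explicit where you wave at ``linear-independence arguments'': it forces $m_{ijl}=m^*_{ijl}$ from the requirement that the jumps of the stationary identity have size zero, then $\beta_{ij}=\beta^*_{ij}$ by differentiating in $t$, and for $\Gamma$ it differentiates $n$ times, localizes on the event $\Omega_{j'}$ that only $N'^{j'}$ has jumped before time $1$, and lets $n\to\infty$ to decouple $y_{\beta_{ij}}$ from $y_{m_{ijl}}$; likewise it attains the infimum over $\nu^*$ at a strictly positive $\beta^*_{ij}$ using $\bar{\Theta}_\beta\subset\real^{d^2}_{>0}$ rather than a generic compactness remark. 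These are elaborations of the plan you state, not a different route.

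The one step that genuinely fails as written is [ND1]. On the simplex, $g_{ij}(x)=\sum_l m^*_{ijl}x_l$ is \emph{not} bounded below by $\min\{m^*_{ijl}\,:\,m^*_{ijl}>0\}$: if some coordinate has $m^*_{ijl}=0$ the sum vanishes at the corresponding vertex, and if $m^*_{ijl}=0$ for every $l$ --- which Assumption \ref{Ex and Sim Asm 2-1-1}(i) explicitly permits and which is exactly the sparse regime the proposition addresses --- then $g_{ij}\equiv 0$ and [ND1] is simply false for this representation of the kernel. The paper circumvents this by re-expressing the kernel as $\alpha^*_{ij}e^{-(t-s)\beta^*_{ij}}g_{ij}(x)$ with $g_{ij}\equiv 1$ and $\alpha^*_{ij}=0$ whenever all $m^*_{ijl}$ vanish, so that the nondegeneracy conditions are checked for a representation in which the offending $g_{ij}$ are replaced; you need this (or an equivalent device) for the degenerate blocks. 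A smaller slip of the same kind: in [L1], taking $u_X$ a ``sufficiently large constant'' points the wrong way for (\ref{GEMHP eq 3}), which requires the drift of $f_X$ to be bounded above by $-u_X(x)$; the paper instead defines $u_X$ to be exactly the negative of that drift so that (\ref{GEMHP eq 3}) holds with equality.
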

\begin{remark}
\label{Ex and Sim Rem 2-1-1}
	Referring to the expression of the quasi log-likelihood function in (\ref{App to GEMHP Eq 2}), for the above model, the intensity's parameters $(m_{ijl})_{ijl}, (\mu_i)_i, (\beta_{ij})_{ij}$ and the mark's parameter $\theta_M$ are only included in $l^{(1)}_T(\vartheta)$ and $l^{(2)}_T(\vartheta)$, respectively.
	Therefore, we can obtain the estimator by optimizing $l^{(1)}_T(\vartheta)$ and $l^{(2)}_T(\vartheta)$ independently.
	Then, Assumption \ref{Ex and Sim Asm 2-1-1} (v) is irrelevant to the estimation for $(m_{ijl})_{ijl}, (\mu_i)_i, (\beta_{ij})_{ij}$.
\end{remark}

\subsubsection{Simulation Results}
\label{Ex and Sim 2-2}
As a simple case, suppose that only one user posts texts and set the number of topics in texts to $3$.
Moreover, we assume that the proportion of each topic in a text follows simply the Dirichlet distribution, although it has a more complicated distribution in the LDA model.

Let $\bar{N}$ be the $1$-dimensional GEMHP whose intensity is
\begin{eqnarray}
 	\lambda_t(\vartheta^*) &=& \mu + \int_{[0,t) \times \mathbb{X}} e^{-\beta(t-s)} \big(m_1x_1 + m_2x_2 + m_3x_3 \big)\bar{N}(ds, dx) \bigg\arrowvert_{\vartheta = \vartheta^*}\nonumber\\
 	&=& 1.5 + \int_{[0,t) \times \mathbb{X}} e^{-0.5(t-s)} \big(0.4x_1 + 0.0x_2 + 0.4x_3 \big)\bar{N}(ds, dx),\nonumber
\end{eqnarray}
where its marks independently and identically follow the $3$-dimensional Dirichlet distribution with a parameter $\alpha = (2, 2, 5)$.
Here, we only estimate the parameters $m_1, m_2, m_3, \mu, \beta$ since $\alpha$ is estimated as the conventional MLE, see Remark \ref{Ex and Sim Rem 2-1-1}.
Furthermore, we assume that only parameters $m$'s can take the zero value, i.e., we set $\theta^0 = (m_1, m_2, m_3)$ and $\theta^1 = (\mu, \beta)$.
Then, we can immediately confirm the conditions in Assumption \ref{Ex and Sim Asm 2-1-1}.
We set the hyperparameters of the P-O estimator in Remark \ref{P-O Rem 2} to be $q=1.0, \gamma=2.0, a=0.5$, the observation times $T = 100, 500, 3000$, and the number of the Monte Carlo simulation $MC = 300$.

Table \ref{Ex and Sim Table2-2-1} shows the fraction of trials in which the parameters $m_1, m_2$, and $m_3$ are estimated to be completely zero.
We can see that the variable selection is performed more accurately by the P-O estimator than by the QMLE as $T$ becomes larger.

\begin{table}[htbp]
	\centering
	\caption{{\bf Percentage of estimated to be zero.}}
	\label{Ex and Sim Table2-2-1}
  \scalebox{0.9}{
	\begin{tabular}{| l | l | l | l | l | l |} \hline
		\multicolumn{6}{| l |}{QMLE: $T=100$} \\ \hline
		$m_1$ & 30.0\% & $m_2$ & {\bf 58.7\%} & $m_3$ & 18.0\% \\ \hline
		\multicolumn{6}{| l |}{QMLE: $T=500$} \\ \hline
		$m_1$ & 6.00\% & $m_2$ & {\bf 54.0\%} & $m_3$ & 0.00\% \\ \hline
		\multicolumn{6}{| l |}{QMLE: $T=3000$} \\ \hline
		$m_1$ & 0.00\% & $m_2$ & {\bf 51.7\%} & $m_3$ & 0.00\%  \\ \hline
	\end{tabular}
	\begin{tabular}{| l | l | l | l | l | l |} \hline
		\multicolumn{6}{| l |}{P-OE: $T=100$} \\ \hline
		$m_1$ & 45.3\% & $m_2$ & {\bf 73.0\%} & $m_3$ & 50.3\% \\ \hline
		\multicolumn{6}{| l |}{P-OE: $T=500$} \\ \hline
		$m_1$ & 19.7\% & $m_2$ & {\bf 77.3\%} & $m_3$ & 7.33\% \\ \hline
		\multicolumn{6}{| l |}{P-OE: $T=3000$} \\ \hline
		$m_1$ & 0.00\% & $m_2$ & {\bf 85.7\%} & $m_3$ & 0.00\%  \\ \hline
	\end{tabular}
  }
\end{table}

Figure \ref{Ex and Sim Fig2-2-1} show histograms of the error distribution of the P-O estimator, that is, histograms of the values of $\sqrt{T} (\check{\vartheta}_T - \vartheta^*)$.
It seems that the distribution is close to the normal distribution as $T$ becomes larger.
\begin{figure}[htbp]
	\centering
	\includegraphics[width=11.5cm]{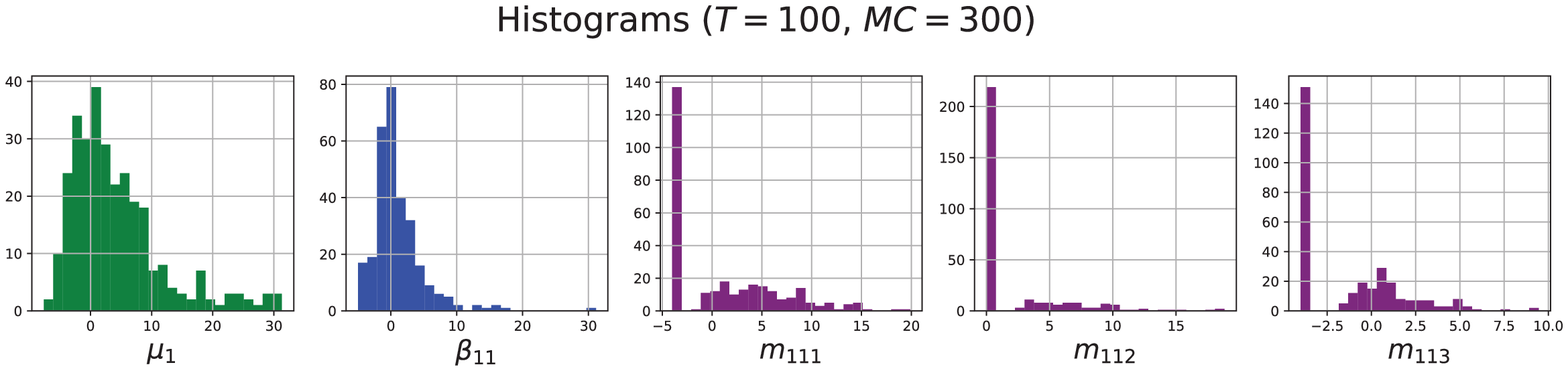}\\
	\includegraphics[width=11.5cm]{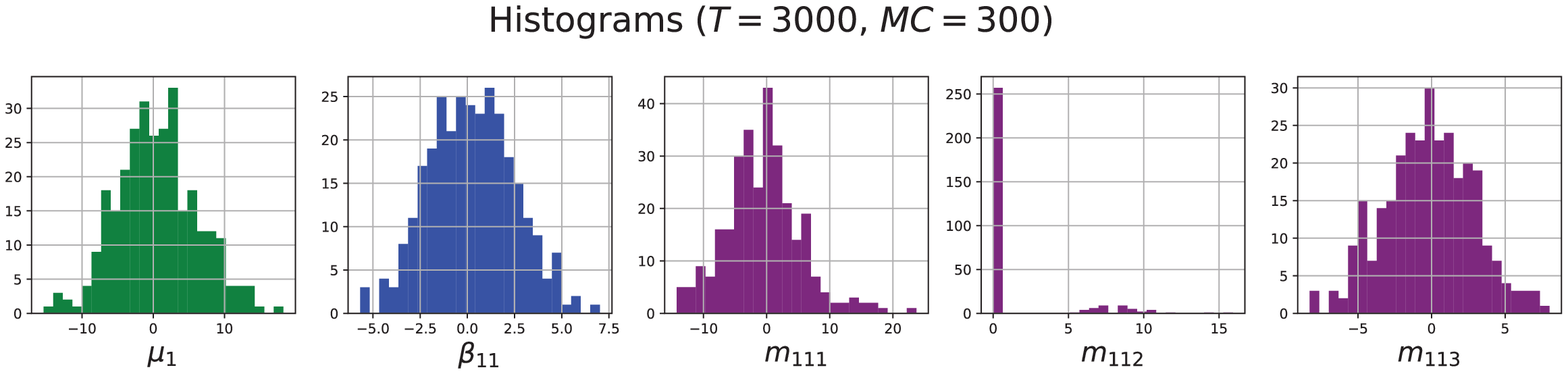}
	\caption{{\bf Histograms of the error of the P-O estimator $\sqrt{T} (\check{\vartheta}_T - \vartheta^*)$.}}
	\label{Ex and Sim Fig2-2-1}
\end{figure}

Table \ref{Ex and Sim Table2-2-2} shows the averages of squared errors of the QMLE $(\tilde{\vartheta}_T - \vartheta^*)^2$ and the P-O estimator $(\check{\vartheta}_T - \vartheta^*)^2$.
For non-zero parameters, owing to the asymptotic normality, both the QMLE and the P-O estimator have asymptotically the same level of variance.
For zero parameters, we can guess that the P-O estimator asymptotically has a smaller error than the QMLE, due to the accurate model selection.
However, when the observation time is small, the performance of the QMLE is better due to the miss model selection of the P-O estimator.

\begin{table}[htbp]
 \caption{{\bf Average of squared errors.}}
 \label{Ex and Sim Table2-2-2}
 \centering
  \begin{tabular}{ll | lllll}
   \hline
   $T$ & Method & $\mu$ & $\beta$ & $m_1$ & $m_2$ & $m_3$ \\ \hline\hline
   100 & QMLE & 3.22e-01 & 1.56e-00 & 2.59e-01 & {\bf 1.42e-01} & 8.29e-02 \\
   & P-OE & 7.52e-01 & 2.67e-00 & 3.27e-01 & {\bf 1.74e-01} & 1.10e-01 \\
   500 & QMLE & 6.09e-02 & 1.16e-02 & 6.11e-02 & {\bf 3.44e-02} & 1.98e-02 \\
   & P-OE & 6.40e-02 & 1.19e-02 & 7.97e-02 & {\bf 3.88e-02} & 2.65e-02 \\
   3000 & QMLE & 1.07e-02 & 1.74e-03 & 1.17e-02 & {\bf 4.74e-03} & 3.11e-03 \\
 	 & P-OE & 1.07e-02 & 1.73e-03 & 1.16e-02 & {\bf 3.61e-03} & 3.04e-03 \\
   \hline
  \end{tabular}
\end{table}


\section*{Acknowledgment}
I am deeply grateful to Professor Yoshida.
Without his guidance and help, I could not have completed this article.
This research was supported by the FMSP program of The University of Tokyo and Japan Science and Technology Agency CREST JPMJCR14D7.

\begin{appendix}
\section{Proofs}
\label{Appendix Proofs}
  \subsection{Proofs of Section \ref{P-O}}

  \begin{proof}[Proof of Theorem \ref{P-O Thm 1} (i)]
  By considering $0 \ge Q^{(q)}_T\big( \Stheta, \Snu \big) - Q^{(q)}_T\big( \theta^{0*}, \Snu \big)$ similarly to the proof of Theorem 1 in \cite{SuzukiYoshida2020}, we obtain the $\sqrt{T}$-consistency of $\Stheta$ from Assumption \ref{P-O Asm 1} (i) and (ii).
  Let
  \[
  	\Omega_{T, 1} = \Big\{ \omega \in \Omega \Big\arrowvert \exists j \in \calJ^1 \ s.t. \ \hat{\theta}^{0,(q)}_j = 0. \Big\},
  \]
  and
  \[
  	\Omega_{T, 2} = \Big\{ \omega \in \Omega \Big\arrowvert \exists j \in \calJ^0 \ s.t. \ \hat{\theta}^{0,(q)}_j \neq 0. \Big\}.
  \]
  Then we obtain $\big\{ \hat{\calJ}^0_T \neq \calJ^0 \big\} \subset \Omega_{T, 1} \cup \Omega_{T, 2}$.
  From the consistency of $\Stheta$,
  \begin{eqnarray}
  \label{Ap eq1-1}
  	P[\Omega_{T, 1}] &\le& P\Big[ \sqrt{T}\big\lvert \Stheta - \theta^{0*} \big\rvert \ge T^{\frac{1}{2}} c_0 \Big] \to 0
  \end{eqnarray}
  holds as $T \to \infty$, where $c_0 = \min_{j \in \calJ^1} \theta^{0*}_j >0$.
  On the other hand, to handle the case where the true value is at the boundary of the parameter space, we consider the following sets:
  \[
  	\Omega_{T, 2, 1} = \Big\{ \omega \in \Omega \Big\arrowvert \exists j \in \calJ^0 \ s.t. \ \left\lvert  \hat{\theta}^{0, (q)}_j - \theta^{0*}_j \right\lvert  \ge c_j \Big\},
  \]
  and
  \[
    \Omega_{T, 2, 2} = \Big\{ \omega \in \Omega \Big\arrowvert \exists j \in \calJ^0 \ s.t. \ \hat{\theta}^{0, (q)}_j \neq 0 \text{ and } \hat{\theta}^{0, (q)}_j \notin \partial \Theta^0_j \Big\},
  \]
  where $\Theta^0_j = \{ \theta^0_j \arrowvert ( \theta^0_1, \dots, \theta^0_{p_0} ) \in \Theta^0 \}$ and $c_j = \inf_{\theta^0_j \in \partial \Theta^0_j - \{0\} } \theta^0_j > 0$.
  Then, $\Omega_{T,2} \subset \Omega_{T, 2, 1} \cup \Omega_{T, 2, 2}$ holds.
  The consistency of $\Stheta$ immediately yields
  \begin{eqnarray}
  \label{Ap eq1-2}
  	P[\Omega_{T, 2, 1}] \to 0
  \end{eqnarray}
  as $T \to \infty$.
  On $\Omega_{T, 2, 2}$, $Q^{(q)}_T(\theta^0, \nu^0)$ is differentiable at $(\theta^0, \nu^0) = (\Stheta, \Snu)$ with respect to some $j$-th component.
  Thus, the same way as the proof of Theorem 2 in \cite{SuzukiYoshida2020} leads
  \begin{eqnarray}
  \label{Ap eq1-3}
  	P[\Omega_{T, 2, 2}] &\le& P\left[ \exists j \in \calJ^0 \ s.t. \ 2 \big\lvert \sqrt{T} \big( \hat{\theta}^{0,(q)}_j - \tilde{\theta}^0_j \big) \big\rvert \big\lvert \sqrt{T}\hat{\theta}^{0,(q)}_j \big\rvert^{1-q} \ge q T^{\frac{2-q}{2}} b_T \right] \nonumber\\
    &\to& 0.
  \end{eqnarray}
  From the equations (\ref{Ap eq1-1}), (\ref{Ap eq1-2}), and (\ref{Ap eq1-3}), we get the conclusion.
  \end{proof}

  We write $X_T \simle T^{-L}$ for a sequence $X_T$ and a positive constant $L$ if there exists a positive constant $C_L$ such that  $X_T \le C_LT^{-L}$ for all $T>0$.

  \begin{proof}[Proof of Theorem \ref{P-O Thm 1} (ii)]
  Same as the proof of Theorem 4 in \cite{SuzukiYoshida2020}, we have the $L^{\infty -}$-boundedness of $\sqrt{T} \big(\Stheta - \theta^{0*} \big)$, and $P[\Omega_{T, 1}]  \simle T^{-L}$, $P[\Omega_{T, 2, 2}]  \simle T^{-L}$.
  Moreover, we immediately obtain an inequality
  \[
  	P[\Omega_{T, 2, 1}] \le \frac{1}{(T^{\frac{1}{2}} \min_j c_j)^{2L}} E\left[ \left\lvert  \sqrt{T}\big(\Stheta - \theta^{0*} \big) \right\lvert ^{2L} \right] \ \simle\ T^{-L}
  \]
  for $c_j = \inf_{\theta^0_j \in \partial \Theta^0_j - \{0\} } \theta^0_j > 0$.
  Thus, $P\big[ \hat{\calJ}^0_T \neq \calJ^0 \big] \simle T^{-L}$ holds.
  \end{proof}

  Theorem \ref{P-O Thm 1} (iii) is obvious.
  The same way as the proof of Theorem 5 (b) in \cite{SuzukiYoshida2020} leads to Theorem \ref{P-O Thm 1} (iv).


  \subsection{Proofs of Section \ref{App to GEMHP}}
	\label{Appendix for Sec4}
  Let $E = \real_+ \times \real_+ \times \real^{p_1}$, and $\Dup$ be a set of functions $\phi \colon E \to \real$ such that:
  \begin{enumerate}
  	\itemi $\phi$ is of class $C^1$ on $(\real_+ - \{0\}) \times (\real_+ - \{0\}) \times \real^{p_1}$,
  	\itemii $\phi$ and $\lvert \Delta \phi \rvert$ are polynomial growth in $(u, v, w, u^{-1}1_{\{u \neq 0\}}, v^{-1}1_{\{v \neq 0\}})$ for $(u, v, w) \in E$,
  	\itemiii $\phi(0, v, w) = \phi(u, 0, w) = 0$.
  \end{enumerate}

  The sufficient conditions for the PLD are proposed by \cite{Clinet2021}.
  Here, modifying the conditions [A1]-[A3] in \cite{Clinet2021} to allow the model with nuisance parameter, we consider the following conditions.
	Here, we again call that $f(\vartheta)$ is of class $C^i(\Xi)$ for some $i \in \naturals$ if $f(\vartheta)$ is of class $C^i(\mathring{\Xi})$ and its derivatives admit continuous extensions on $\partial \Xi$.

	\begin{description}
  	\item[\textnormal{[A1]}] For any $i = 1, \dots, d$,
  		\begin{enumerate}
  			\itemi $\lambda^i_t(\vartheta)q_t(x, \vartheta)$ is a predictable on $\Omega \times \real_+ \times \bbX$ for any $\vartheta \in \Xi$,
  			\itemii $\vartheta \mapsto \lambda^i_t(\vartheta)q_t(x, \vartheta)$ is almost surely in $C^4(\Xi)$ for any $(t, x) \in \real_+ \times \bbX$,
  			\itemiii $\lambda^i_t(\vartheta)q_t(x, \vartheta)=0$ if and only if $\lambda^i_t(\vartheta^*)q_t(x, \vartheta^*) =0, dt\rho(dx)P(d\omega)\text{-}a.e.$ for any $\vartheta \in \Xi$ and $\nu^* \in \calN$.
  		\end{enumerate}

  	\item[\textnormal{[A2]}] For any $p>1$ and $i = 1, \dots, d$,
  		\begin{enumerate}
  			\itemi $\sup_{t \in \real_+} \sum_{n=0}^3 \left\| \sup_{\vartheta \in \Xi} \lvert \partial_{\theta}^n \lambda_t^i(\vartheta) \rvert \right\|_p < \infty$,
  			\itemii $\sup_{t \in \real_+}  \left\| \sup_{\vartheta \in \Xi} \lvert \lambda_t^i(\vartheta) \rvert^{-1} 1_{\{ \lambda_t^i(\vartheta) \neq 0 \}} \right\|_p < \infty,$
  			\itemiii {\small $\sup_{t \in \real_+} \sum_{n=0}^3 \int_{\bbX} E \left[ \sup_{\vartheta \in \Xi, \nu^* \in \calN} \lvert \partial_{\theta}^i \log q^i_t(x, \vartheta) \rvert^p q^i_t(x, \vartheta^*) \right]\rho(dx) < \infty$,}
  			\itemiv {\small $\sup_{t \in \real_+} \sum_{n=0}^3 \int_{\bbX} E \left[ \sup_{\vartheta \in \Xi, \nu^* \in \calN} \lvert\partial_{\theta}^i \log q^i_t(x, \vartheta) \rvert^{-p} q^i_t(x, \vartheta^*) \right]\rho(dx) < \infty$.}
  		\end{enumerate}

  	\item[\textnormal{[A3]}] There exist $\gamma \in (0, 1/2)$, $\pi_i \colon \Dup \times \Xi \times \calN \to \real$, and  $\chi_i \colon \{0,1,2\} \times \Xi \times \calN \to \real$ such that
  		\[
  			\sup_{\vartheta \in \Xi, \nu^* \in \calN} T^{\gamma} \left\| \frac{1}{T} \int_0^T \phi \left( \lambda^i_s(\vartheta^*), \lambda^i_s(\vartheta), \partial_{\theta} \lambda^i_s(\vartheta) \right) ds - \pi_i(\phi, \vartheta, \nu^*) \right\|_p \to 0
  		\]
  		as $T \to \infty$ for any $\phi \in \Dup$ and $p \ge 1$, and
  		{\small\begin{eqnarray*}
  			\sup_{\vartheta \in \Xi, \nu^* \in \calN} T^{\gamma} \Bigg\| \frac{1}{T} \int_{[0,T] \times \bbX} \partial^k_{\theta} \log q^i_s (x, \vartheta) q^i_s (x, \vartheta^*)\rho(dx)\lambda^i_s(\vartheta^*)ds - \chi_i(k, \vartheta, \nu^*) \Bigg\|_p \to 0
  		\end{eqnarray*}}
  		\hspace{-3pt}as $T \to \infty$ for any $k \in \{0,1,2\}$.
  \end{description}
	In [A1], which is a regularity condition to ensure the existence of the quasi log-likelihood process, we extended the differentiability of each function to the boundary of the parameter space.
	[A2] gives moment and smoothness conditions, and here, we consider the finiteness uniformly in $\nu^*$.
	[A3] is the condition for the ergodicity of $\lambda^i$ and $q^i$ uniformly in $(\vartheta, \nu^*) \in \Xi \times \calN$.
	These conditions are derived from the ergodicity of the GEMHP and the conditions [AH1]-[AH2].
  \begin{lemma}
  \label{Appendix Lem 1}
  	Under [L1]-[L2], [ND1]-[ND2], and [AH1]-[AH2], the conditions [A1]-[A3] hold.
  \end{lemma}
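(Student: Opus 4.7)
The plan is to reduce each of [A1]--[A3] to properties of the stationary version of the GEMHP provided by Theorem~\ref{GEMHP thm 1}, combined with the regularity and moment bounds in [AH1]--[AH2]. The key inputs will be the $V$-geometric ergodicity and the $V$-geometric mixing of the Markov process $Z$, together with the observation that [AH1]--[AH2] are stated uniformly in $\vartheta \in \Xi$ and $\nu^* \in \calN$, so the nuisance-parameter extension largely amounts to bookkeeping once the core estimates are in place.

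First, I would dispatch [A1]. Predictability in (i) is immediate from the explicit form (\ref{App to GEMHP Eq 1}) since $\lambda^i_t$ is driven by the left-limit of $\bar{N}^j$ and $q^i_t$ depends on $X_{t-}$. For (ii), the map $\vartheta \mapsto \lambda^i_t(\vartheta) q^i_t(x,\vartheta)$ is a composition of $\mu_i$, $g_{ij}$, $A_{ij}$, $B_{ij}$, $p_i$ and of the matrix exponential $(\vartheta, s) \mapsto e^{-sB_{ij}(\vartheta)}$; all of these are of class $C^4(\Xi)$ by [AH1](i), and smoothness of the matrix exponential in its generator handles the only non-obvious factor. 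Condition (iii) is simply [AH1](ii) transported through the definition of $\lambda^i_t q^i_t$.

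Next, for [A2], I would use the identification $\lambda^i_t(\vartheta) = \mu_i(X_{t-}, \vartheta) + \sum_j \langle A_{ij}(\vartheta) \mid \calE^{ij}_t \rangle$ (with $\calE^{ij}_t$ the excitation process in (\ref{GEMHP Eq 1}), built from $g_{ij}(x,\vartheta^*)$), differentiate in $\vartheta$ under the integral, and bound each factor by $C_p e^{\eta f_X(X_{t-})}$ or by a polynomial in $\calE^{ij}_t$ using [AH2](i). The $V$-geometric ergodicity from Theorem~\ref{GEMHP thm 1}, applied with $V(z) = \exp\bigl(\sum \langle a_{ij} \mid \epsilon_{ij} \rangle + \eta f_X(x)\bigr)$ and large-enough $a_{ij}$, yields $\sup_{t \ge 0} E[V(Z_t)^p] < \infty$ for every $p \ge 1$ (after possibly rescaling $\eta$ and $a_{ij}$), which dominates every required moment uniformly in $t$. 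The reciprocal moments in (ii) and (iv) use the positivity provided by [ND1] together with the $\mu_i^{-1}$ bound in [AH2](i) and the $\log$-derivative bound in [AH2](ii); the monotone lower bound $\lambda^i_t \ge \mu_i(X_{t-}, \vartheta)$ reduces (ii) to an integrable power of $e^{\eta f_X(X_{t-})}$.

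Finally, for [A3], I would pass to the stationary version $(Z'_t)_{t \ge 0}$ of $Z$ and define
\[
\pi_i(\phi, \vartheta, \nu^*) = E\bigl[\phi\bigl(\lambda'^i_0(\vartheta^*), \lambda'^i_0(\vartheta), \partial_\theta \lambda'^i_0(\vartheta)\bigr)\bigr], \qquad \chi_i(k, \vartheta, \nu^*) = E\Bigl[\int_{\bbX} \partial^k_\theta \log q'^i_0(x,\vartheta) \, q'^i_0(x, \vartheta^*) \rho(dx) \, \lambda'^i_0(\vartheta^*)\Bigr],
\]
which are finite and well defined by the $V$-bounds just obtained. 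Fixing $(\vartheta, \nu^*)$, the $V$-geometric mixing inequality of Theorem~\ref{GEMHP thm 1} bounds the covariance of the integrand at times $s$ and $s+u$ by $C V(Z_0) (r')^u$, so a standard variance computation gives an $L^p$ rate $T^{-1/2+\epsilon}$ of $\frac{1}{T}\int_0^T \cdots \, ds$ to $\pi_i(\phi, \vartheta, \nu^*)$, easily accommodating $\gamma \in (0, 1/2)$. For the joint uniformity in $(\vartheta, \nu^*)$, I would differentiate in the parameters, bound the derivatives of the integrand in $L^p$ by the same $V$-moment technique (the $n \le 3$ derivative bounds in [AH2] are there precisely for this), and then apply Sobolev embedding on the bounded open set $\mathring{\Xi}\times \calN$; the $C^4$ regularity from [AH1](i) propagates to enough parameter derivatives of $\phi(\lambda^i_s(\vartheta^*), \lambda^i_s(\vartheta), \partial_\theta \lambda^i_s(\vartheta))$ to exceed the embedding dimension.

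The main obstacle is the joint-uniform convergence in [A3]; the issue is that the nuisance parameter $\nu^*$ enters only through the law of the driving GEMHP, so one must check that the mixing constants $C'$, $r'$ of Theorem~\ref{GEMHP thm 1} can be chosen independently of $\nu^* \in \calN$. This follows because the constants $\bar{c}$, $\rho$, $\eta$ appearing in [L1]--[L2] and in the Feller/reachability conditions [ND1]--[ND2] can be taken uniform in $\nu^* \in \calN$ on a compact parameter set (their expressions involve $\mu_i$, $g_{ij}$, $G_{ij}$, $p_i$, all continuous and bounded on $\Xi$ by [AH2]); given such uniform mixing constants, the Sobolev argument above delivers uniform-in-$(\vartheta,\nu^*)$ convergence at rate $T^\gamma$, completing the proof of [A3] and hence the lemma.
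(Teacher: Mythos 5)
Your proof follows essentially the same route as the paper, which simply delegates [A1] to [AH1], [A2] to the $V$-geometric-ergodicity moment bounds of Lemma 6.6 in Clinet (2021) together with [AH2], and [A3] to the mixing-based ergodic averaging of Lemma 6.7 there plus a Sobolev-inequality argument for uniformity in $(\vartheta, \nu^*)$ --- exactly the steps you spell out. One small correction: the nuisance parameter $\nu^*$ does \emph{not} enter the law of the data-generating GEMHP (that is precisely what makes it a nuisance parameter); it enters only the integrands, through $\partial_\theta \lambda^i_s(\vartheta)$ and $\partial_\theta \log q^i_s(x,\vartheta)$ evaluated at $\vartheta^* = (\theta^*, \nu^*)$, so the uniformity of the mixing constants that you single out as the main obstacle is automatic, and the real work is the Sobolev embedding over $\calN$ that you already include.
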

  \begin{proof}
  	The condition [A1] immediately follows from [AH1].
		Same as Lemma 6.6 in \cite{Clinet2021}, (i)-(ii) of [A2] hold.
		From [AH2] and Theorem \ref{GEMHP thm 1}, we readily obtain (iii)-(iv) of [A2].
  	With the help of uniformity in $\nu^*$ in [A2], the condition [A3] holds similar to the proof of Lemma 6.7 in \cite{Clinet2021}.
  \end{proof}

  We write $\Delta_T(\nu^*) = T^{-1/2} \partial_{\theta} l_T(\vartheta^*)$ and $\Gamma_T(\vartheta) = - T^{-1} \partial_{\theta}^2 l_T(\vartheta)$.
  Furthermore, we decompose $\bbY_T(\vartheta, \nu^*)$, $\Delta_T(\nu^*)$, and $\Gamma_T(\vartheta)$ as
  \begin{eqnarray*}
  	\bbY_T(\vartheta, \nu^*)
    &=& \bbY^{(1)}(\vartheta, \nu^*) + \bbY^{(2)}(\vartheta, \nu^*) \\
    &:=& \frac{1}{T} \big\{ l^{(1)}_T(\vartheta) - l^{(1)}_T(\vartheta^*) \big\} + \frac{1}{T} \big\{ l^{(2)}_T(\vartheta) - l^{(2)}_T(\vartheta^*) \big\},
  \end{eqnarray*}
  \[
  	\Delta_T(\nu^*) = \Delta^{(1)}_T(\nu^*) + \Delta^{(2)}_T(\nu^*) := T^{-1/2} \partial_{\theta} l^{(1)}_T(\vartheta^*) + T^{-1/2} \partial_{\theta} l^{(2)}_T(\vartheta^*),
  \]
  and
  \[
  	\Gamma_T(\vartheta) = \Gamma^{(1)}_T(\vartheta) + \Gamma^{(2)}_T(\vartheta) := - T^{-1} \partial_{\theta}^2 l^{(1)}_T(\vartheta) - T^{-1} \partial_{\theta}^2 l^{(2)}_T(\vartheta).
  \]
  For the proof of Theorem \ref{App to GEMHP Thm 1}, we prepare the following lemmas.

  \begin{lemma}
  \label{Appendix Lem 2}
  	Under [A1]-[A3], we have, for any $p >1$,
  	\begin{eqnarray}
  		\sup_{T \in \real_+} \left\| \sup_{\nu^* \in \calN} \big\lvert \Delta_T(\nu^*) \big\rvert \right\|_p < \infty. \nonumber
  	\end{eqnarray}
  \end{lemma}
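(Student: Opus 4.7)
The plan is to express $\Delta_T(\nu^*)$ as a pair of purely martingale stochastic integrals, derive $L^p$ moment bounds for fixed $\nu^*$ via the Burkholder--Davis--Gundy inequality, and then promote these to a bound for $\sup_{\nu^* \in \calN}|\Delta_T(\nu^*)|$ by a Sobolev embedding on the finite-dimensional bounded open set $\calN$.

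First, I would rewrite both components in martingale form. Using $\partial_\theta \lambda^i_t(\vartheta^*) = \partial_\theta \log \lambda^i_t(\vartheta^*) \cdot \lambda^i_t(\vartheta^*)$ on $\{\lambda^i_t(\vartheta^*) > 0\}$, and $\int_{\bbX}\partial_\theta q^i_t(x,\vartheta^*)\rho(dx) = \partial_\theta \int_{\bbX} q^i_t(x,\vartheta^*)\rho(dx) = 0$, the drift contributions cancel and one obtains
\[
\Delta_T^{(1)}(\nu^*) = \frac{1}{\sqrt T}\sum_{i=1}^d \int_0^T \partial_\theta \log \lambda^i_t(\vartheta^*)\, dM^i_t,
\]
\[
\Delta_T^{(2)}(\nu^*) = \frac{1}{\sqrt T}\sum_{i=1}^d \int_{[0,T]\times\bbX}\partial_\theta \log q^i_t(x,\vartheta^*)\,(\bar N^i - \nu^i)(dt,dx),
\]
where $M^i_t = N^i_t - \int_0^t \lambda^i_s(\vartheta^*)\,ds$ and $\nu^i$ is the compensator of $\bar N^i$. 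Note that the observations and the martingales $M^i$, $\bar N^i - \nu^i$ do not depend on $\nu^*$; the dependence enters only through the integrands evaluated at $\vartheta^* = (\theta^*,\nu^*)$.

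Next, for fixed $\nu^* \in \calN$ and any $p \ge 2$, I would apply BDG to each stochastic integral and then Jensen/Fubini. For $\Delta_T^{(1)}$, the resulting quadratic variation is $\int_0^T |\partial_\theta \log \lambda^i_t(\vartheta^*)|^2 dN^i_t$; its $L^{p/2}$-norm is bounded by Hölder using the moment estimates on $\sup_{\vartheta\in\Xi}|\partial_\theta \lambda^i_t|$ and on $\sup_{\vartheta \in \Xi}|\lambda^i_t|^{-1} 1_{\{\lambda^i_t \neq 0\}}$ furnished by [A2] (i)--(ii), together with the identity $\partial_\theta \log \lambda^i = (\lambda^i)^{-1}\partial_\theta\lambda^i$; the $T^{-p/2}$ prefactor cancels against the factor of $T$ produced by Jensen. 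The analogous computation for $\Delta_T^{(2)}$ rests on [A2] (iii)--(iv). Because the estimates in [A2] are uniform in $\vartheta \in \Xi$, hence in $\nu^*$, one obtains $\sup_{T>0,\,\nu^*\in\calN}\|\Delta_T(\nu^*)\|_p < \infty$ for every $p\ge 2$.

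Finally, to interchange $\sup_{\nu^*}$ and the $L^p$-norm I would differentiate in $\nu^*$ and apply Morrey's inequality on the bounded open convex set $\calN \subset \real^{n_0+n_1}$. By [A1] (ii), the integrands are jointly $C^4$ in $\vartheta$, so $\partial^\alpha_{\nu^*}\Delta_T(\nu^*)$ is again a martingale stochastic integral with integrand of the form $\partial^\alpha_{\nu^*}\partial_\theta \log \lambda^i$ (respectively $\partial^\alpha_{\nu^*}\partial_\theta \log q^i$). By Faà di Bruno, for $|\alpha| \le 2$ these mixed partials reduce to polynomials in $\partial^\beta_\vartheta \lambda^i$, $\partial^\beta_\vartheta q^i$ and $(\lambda^i)^{-1}$ with $|\beta| \le 3$, all of which are controlled uniformly in $\vartheta$ by [A2]. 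Iterating the BDG bound gives $\sup_{T>0,\,\nu^*\in\calN}\|\partial^\alpha_{\nu^*}\Delta_T(\nu^*)\|_p < \infty$ for $|\alpha|\le 2$. Choosing $p$ large enough that $2p > n_0 + n_1$, the Sobolev embedding $W^{2,p}(\calN) \hookrightarrow C(\bar\calN)$ yields $\sup_{T>0}\|\sup_{\nu^*\in\calN}|\Delta_T(\nu^*)|\|_p < \infty$, and monotonicity of $L^p$-norms on a probability space extends the conclusion to all $p > 1$.

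I expect the main technical obstacle to be the bookkeeping in the last step: verifying that every mixed partial $\partial^\alpha_{\nu^*}\partial_\theta \log \lambda^i$ and $\partial^\alpha_{\nu^*}\partial_\theta \log q^i$ that arises when differentiating in $\nu^*$ is a polynomial combination of precisely the quantities whose moments are handled by [A2], so that the BDG estimates can be iterated cleanly. Once this combinatorial step is in place, both the BDG bounds for fixed $\nu^*$ and the final Sobolev embedding are routine.
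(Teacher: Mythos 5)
Your proposal follows essentially the same route as the paper's proof: write $\Delta_T(\nu^*)$ as a stochastic integral against the compensated jump measure, bound its $L^p$-moments uniformly in $T$ and $\nu^*$ via the Burkholder--Davis--Gundy and H\"older inequalities together with [A2], and then move the supremum over $\nu^*$ inside the $L^p$-norm by a Sobolev embedding on the bounded set $\calN$. The only cosmetic difference is that you invoke a $W^{2,p}$ embedding (requiring two $\nu^*$-derivatives), whereas the paper uses the first-order Sobolev inequality with $p$ large, controlling only $\Delta_T$ and $\partial_{\nu^*}\Delta_T$; both fit within the derivative orders covered by [A2].
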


  \begin{proof}
  	By Sobolev's inequality of Theorem 4.12 in \cite{AdamsFournier2003}, there exists a constant $A(\calN, p)$ such that
  	\begin{eqnarray}
  	\label{Appendix Eq 3-1}
  		&&  \left\| \sup_{\nu^* \in \calN} \big\lvert \Delta_T(\nu^*) \big\rvert \right\|_p^p \nonumber\\
  		&\le& A(\calN, p) \left\{ \int_{\calN} E \left[  \big\lvert \Delta_T(\nu^*) \big\rvert^p \right] d\nu^* + \int_{\calN} E \left[ \big\lvert \partial_{\nu^*} \Delta_T(\nu^*) \big\rvert^p \right] d\nu^* \right\} \nonumber\\
  		&\le& A(\calN, p) \diam(\calN) \left\{ \sup_{\nu^* \in \calN} E \left[  \big\lvert \Delta_T(\nu^*) \big\rvert^p \right]  + \sup_{\nu^* \in \calN} E \left[ \big\lvert \partial_{\nu^*} \Delta_T(\nu^*) \big\rvert^p \right] \right\}.
  	\end{eqnarray}
  	Since we immediately get that
  	\[
  		\Delta_T(\nu^*) = \frac{1}{\sqrt{T}} \sum_{i=1}^d \int_{[0,T] \times \bbX} \frac{\partial_{\vartheta} \big( \lambda^i_t(\vartheta^*) q^i_t(x, \vartheta^*) \big)}{ \lambda^i_t(\vartheta^*) q^i_t(x, \vartheta^*) } 1_{\{ \lambda^i_t(\vartheta^*) q^i_t(x, \vartheta^*) \neq 0 \}} \tilde{M}^i(dt, dx),
  	\]
  	where $\tilde{M}^i(dt, dx) = \bar{N}^i(dt, dx) -  \lambda^i_t(\vartheta^*) q^i_t(x, \vartheta^*) dt \rho(dx)$, the last term in (\ref{Appendix Eq 3-1}) is bounded uniformly in $T$ by Burkholder-Davis-Gundy inequality along with H\"older's inequality and [A2].
  	Thus, we have the conclusion.
  \end{proof}

  \begin{lemma}
  \label{Appendix Lem 3}
  	Under [A1]-[A3], we have, for any $p>1$,
  	\begin{eqnarray}
  		\sup_{T \in \real_+} T^{\gamma} \left\| \sup_{\nu^* \in \calN} \big\lvert \Gamma_T(\vartheta^*) - \Gamma(\nu^*) \big\rvert \right\|_p < \infty. \nonumber
  	\end{eqnarray}
  \end{lemma}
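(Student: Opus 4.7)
The plan is to mirror the structure of the proof of Lemma~\ref{Appendix Lem 2}. Setting $Y_T(\nu^*) := T^{\gamma}(\Gamma_T(\vartheta^*) - \Gamma(\nu^*))$, the first step applies Sobolev's inequality of Theorem 4.12 in \cite{AdamsFournier2003} on the bounded convex open set $\calN$, exactly as in (\ref{Appendix Eq 3-1}), to bound $\big\| \sup_{\nu^* \in \calN} |Y_T(\nu^*)| \big\|_p^p$ by a constant times $\sup_{\nu^* \in \calN} E[|Y_T(\nu^*)|^p]$ plus $\sup_{\nu^* \in \calN} E[|\partial_{\nu^*} Y_T(\nu^*)|^p]$. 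It thus suffices to produce pointwise moment bounds, uniform in $\nu^* \in \calN$ and $T > 0$, both for $Y_T(\nu^*)$ and for $\partial_{\nu^*} Y_T(\nu^*)$.

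Second, I would expand $\Gamma_T(\vartheta^*) = -T^{-1} \partial_\theta^2 l_T(\vartheta^*)$ explicitly, using the compensator identity $\bar{N}^i(dt,dx) = \lambda^i_t(\vartheta^*) q^i_t(x,\vartheta^*)\,dt\,\rho(dx) + \tilde{M}^i(dt,dx)$ together with the Fisher-type identity $\partial_\theta^2 \log f = f^{-1}\partial_\theta^2 f - f^{-2}(\partial_\theta f)^{\otimes 2}$. The $\partial_\theta^2 \lambda^i_t(\vartheta^*)$ contributions coming from the log-derivative cancel against those coming from the compensator, and the remaining expression splits into (i) an ergodic time integral $T^{-1} \int_0^T [\partial_\theta \lambda^i_s(\vartheta^*)]^{\otimes 2}/\lambda^i_s(\vartheta^*)\,ds$, which is of the form treated by the first display in [A3]; (ii) a mark-integral of the form appearing in the second display of [A3] with $k = 2$; and (iii) a sum of stochastic integrals against the compensated measures $\tilde{M}^i$. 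Applying [A3] to pieces (i) and (ii) yields $L^p$-convergence at rate $T^{\gamma}$ uniformly in $\nu^*$, and differentiating under the integral in the defining formula of $\bbY(\vartheta^*, \nu^*)$ from Section~\ref{App to GEMHP} identifies the common limit as $-\partial_\theta^2 \bbY(\vartheta^*, \nu^*) = \Gamma(\nu^*)$.

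For the martingale remainder (iii), the Burkholder--Davis--Gundy inequality combined with H\"older's inequality and the moment and reciprocal-moment bounds of [A2]---exactly as in the proof of Lemma~\ref{Appendix Lem 2}---produces an $L^p$-norm of order $T^{-1/2}$. Multiplication by $T^{\gamma}$ then gives a bound of order $T^{\gamma - 1/2}$, which is bounded in $T$ thanks to $\gamma < 1/2$. Together this controls $\|Y_T(\nu^*)\|_p$ uniformly in $\nu^*$ and $T$.

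The main obstacle, and the reason the moment bounds in [A2] (and hence the regularity demands of [AH2]) are imposed up to $n = 3$, is the Sobolev correction term $\partial_{\nu^*} Y_T(\nu^*)$: this additional derivative raises the total order of differentiation of the quasi log-likelihood to three. An entirely parallel decomposition of $\partial_{\nu^*}(\Gamma_T(\vartheta^*) - \Gamma(\nu^*))$ into ergodic integrals---now involving third-order derivatives of $\lambda^i$ and $\log q^i$---plus a centered martingale, followed by another application of [A3] and of BDG with the third-order estimates of [A2], supplies the matching uniform control of $\|\partial_{\nu^*} Y_T(\nu^*)\|_p$. Inserting both bounds in the Sobolev estimate then concludes the argument.
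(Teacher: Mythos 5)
Your proposal is correct and follows essentially the same route as the paper: a Sobolev embedding over the bounded nuisance-parameter set $\calN$ reduces the supremum in $\nu^*$ to pointwise $L^p$-bounds on the quantity and its $\nu^*$-derivative, after which the convergence at rate $T^{\gamma}$ is supplied by [A3] for both the counting-process part $\Gamma^{(1)}$ and the mark part $\Gamma^{(2)}$. Your write-up merely makes explicit the compensator decomposition into ergodic integrals plus a BDG-controlled martingale remainder of order $T^{\gamma-1/2}$, which the paper's terser proof leaves implicit in its appeal to [A3].
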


  \begin{proof}
  	By applying Sobolev's inequality of Theorem 4.12 in \cite{AdamsFournier2003}, we can choose $B(\calN, p)$ such that
  	\begin{eqnarray*}
  		&& \left( T^{\gamma} \left\| \sup_{\nu^* \in \calN} \big\lvert \Gamma^{(1)}_T(\vartheta^*) - \Gamma^{(1)}(\nu^*) \big\rvert \right\|_p \right)^p \\
  		&\le& B(\calN, p) T^{\gamma p} \Bigg\{ \int_{\calN} E \left[ \big\lvert \Gamma^{(1)}_T(\vartheta^*) - \Gamma^{(1)}(\nu^*) \big\rvert^p \right] d\nu^* \\
      &&+ \int_{\calN} E \left[ \big\lvert \partial_{\nu^*} \Gamma^{(1)}_T(\vartheta^*) - \partial_{\nu^*} \Gamma^{(1)}(\nu^*) \big\rvert^p \right] d\nu^* \Bigg\} \\
  		&\le& B(\calN, p) \diam(\calN) \Bigg\{ \sup_{\nu^* \in \calN} T^{\gamma p}E \left[ \big\lvert \Gamma^{(1)}_T(\vartheta^*) - \Gamma^{(1)}(\nu^*) \big\rvert^p \right] \\
      &&+ \sup_{\nu^* \in \calN} T^{\gamma p}E \left[ \big\lvert \partial_{\nu^*}\Gamma^{(1)}_T(\vartheta^*) - \partial_{\nu^*}\Gamma^{(1)}(\nu^*) \big\rvert^p \right] \Bigg\} \\
  		&\xrightarrow{T \to \infty}& 0,
  	\end{eqnarray*}
  	where the last convergence is a consequence of the condition [A3].
  	Similarly, we get
  	\[
  		T^{\gamma} \left\| \sup_{\nu^* \in \calN} \big\lvert \Gamma^{(2)}_T(\vartheta^*) - \Gamma^{(2)}(\nu^*) \big\rvert \right\|_p \to 0
  	\]
  	as $T \to \infty$, and thus we have the conclusion.
  \end{proof}

  \begin{proof}[Proof of Theorem \ref{App to GEMHP Thm 1}]
  	We only have to check that the conditions (A1''), (A4'), (A6), (B1), and (B2) in Theorem 3 (c) in  \cite{Yoshida2011} are satisfied.
  	Note that the nuisance parameter $\tau$ in \cite{Yoshida2011} is replaced by $\nu^*$ in our literature.
  	Set $\beta_1 = \gamma, \beta_2=\frac{1}{2} - \gamma, \rho = 2, \rho_2 \in (0, 2 \gamma), \alpha \in \left(0, \frac{\rho_2}{2}\right)$, and $\rho_1 \in \left(0, \min \left( 1, \frac{\alpha}{1-\alpha}, \frac{2\gamma}{1-\alpha}\right) \right)$ for satisfying (A4').
  	By Lemma \ref{Appendix Lem 1}, we have
  	\begin{eqnarray}
  	\label{App to GEMHP Thm 1-1}
  		T^{\gamma} \left\| \sup_{\vartheta \in \Xi,  \nu^* \in \calN} \big\lvert \bbY_T(\vartheta, \nu^*) - \bbY(\vartheta, \nu^*) \big\rvert \right\|_p \to^p 0,
  	\end{eqnarray}
  	as $T \to \infty$, for any $p>1$ in the same way as Theorem 2.2 in \cite{Clinet2021}.
  	Thus, (\ref{App to GEMHP Thm 1-1}) and Lemma \ref{Appendix Lem 2} lead to (A6).
  	Moreover, we have
  	\begin{eqnarray}
  	\label{App to GEMHP Thm 1-2}
  		\sup_{T \in \real_+} \left\| \frac{1}{T} \sup_{\vartheta \in \Xi, \nu^* \in \calN} \lvert \partial_{\theta}^3 l_T(\vartheta) \rvert \right\|_p < \infty
  	\end{eqnarray}
  	for any $p>1$ by Sobolev's inequality, H\"older's inequality, and [A2].
  	Now (A1'') is satisfied by (\ref{App to GEMHP Thm 1-2}) and Lemma \ref{Appendix Lem 3}.
  	Finally, from [AH3], we see that the conditions (B1) and (B2) follow immediately; see Remark \ref{App to GEMHP Rem 1}.
  \end{proof}

  \begin{proof}[Proof of Theorem \ref{App to GEMHP Thm 2}]
  	We immediately get the conclusion from Theorem \ref{App to GEMHP Thm 1}, Remark \ref{P-O Rem 1}, Remark \ref{P-O Rem 2} in this article, and Corollary 4.2 in \cite{Clinet2021}.
  \end{proof}

  \subsection{Proofs of Section \ref{Ex and Sim}}
  \begin{proof}[Proof of Proposition \ref{Ex and Sim Pro 1-1-1}]
  	Since there are no marks, we can assume that $\bbX = \real$, $Q_j(x, dy) = \delta_0(dx)$, and $g_{ij}(x)=1$ for all $i, j = 1, \dots, d$.
  	Then, the conditions [L1], [ND1], and [ND2] hold, for example, for $f_X(x)=\lvert x \rvert$ and $u_X(x) = \big( \sum_{i=1}^d \mu^*_i \big)\lvert x \rvert$.
  	The condition [L2] holds by the Perron-Frobenius' theorem and the assumption that the spectral radius of $\Phi$ is less than $1$.
  	Thus, the multivariate exponential Hawkes process with sparse structure has geometric ergodicity.

  	Since each $\mu_i$ takes a positive value, the conditions [AH1] and [AH2] hold.
  	Finally, the condition [AH3] is satisfied in the same way as Lemma A.7 in \cite{ClinetYoshida2017}.
  	To obtain [AH3], we prove that $\bby = 0$ if there exists a vector $\bby$ such that $\inf_{\nu^* \in \calN} \bby^T \Gamma(\nu^*)\bby = 0$.
  	In this proof, we use Assumption \ref{Ex and Sim Asm 1-1-1} (i) to take positive $\nu^*$ realizing the infimum.
  \end{proof}

  \begin{proof}[Proof of Proposition \ref{Ex and Sim Pro 2-1-1}]
  	Let
  	\[
  		g_{ij}(x) = \left\{\begin{array}{ll}
  									1 & \text{if $m^*_{ij,l} = 0$ for all $l = 1, \dots, d'$,}\\
  									\sum_{l=1}^{d'} m^*_{ijl}x_{l} & \text{otherwise,}
  								\end{array}\right.
    \]
    and
    \[
  		\alpha^*_{ij} = \left\{\begin{array}{ll}
  									0 & \text{if $m^*_{ijl} = 0$ for all $l = 1, \dots, d'$,}\\
  									1 & \text{otherwise,}
  								\end{array}\right.
  	\]
  	and then we can write
  	\[
  		\lambda^i_t(\vartheta^*) = \mu^*_i + \sum_{j=1}^d \int_{[0,t) \times \mathbb{X}} \alpha^*_{ij}e^{-(t-s)\beta^*_{ij}}g_{ij}(x) \bar{N}^j(ds, dx), \quad i = 1, \dots, d.
  	\]
  	We check the conditions [L1]-[L2] and [ND1]-[ND2] for this GEMHP.
  	First, the condition [ND1] obviously holds.
  	Let $f_X(x) = \lvert x \rvert$ and $u_X(x) = -\sum_{i=1}^d \mu^*_i \int_{\bbX} \lvert y \rvert - \lvert x \rvert Q_i(x, dy)$ for $x \in \real^{d'}$, where we extend the first domain of $Q_i(x, dy)$ as $Q_i(x, dy)=\delta_{(1,0,\dots,0)}(dy)$ for $x \notin \bbX$.
  	Then, we can easily check the condition [L1].
  	The conditions [L2] and [ND2] are assumed in Assumption \ref{Ex and Sim Asm 2-1-1} (ii) and (iii).
  	Thus, this model has geometric ergodicity.

  	Now, we consider the conditions [AH1]-[AH3].
  	We define $g_{ij}(x, \vartheta) = \sum_{l=1}^{d'} m_{ijl}x_{l}$, and then we can write
  	\[
  		\lambda^i_t(\vartheta) = \mu_i + \sum_{j=1}^d \int_{[0,t) \times \mathbb{X}} e^{-(t-s)\beta_{ij}}g_{ij}(x, \vartheta) \bar{N}^j(ds, dx), \quad i = 1,\dots d.
  	\]
  	[AH1] obviously holds, and [AH2] follows from Assumption \ref{Ex and Sim Asm 2-1-1} (v).
  	We write $\vartheta = (\theta, \nu)$ as in Section \ref{App to GEMHP}.
  	Finally, for the sake of the condition [AH3], we only have to show that (i) $\bbY(\vartheta, \nu^*) < 0$ for any $\theta \in \Theta-\{\theta^*\}$ and $\nu, \nu^* \in \calN$, and (ii) $\Gamma(\nu^*)$ is positive definite uniformly in $\nu^* \in \calN$, see Remark \ref{App to GEMHP Rem 1}.
  	Let there be $\theta \in \Theta-\{\theta^*\}$ and $\nu, \nu^* \in \calN$ such that $\bbY(\vartheta, \nu^*) = 0$.
  	We have
  	\begin{eqnarray*}
  		0 = -\bbY(\vartheta, \nu^*)
  		&=& \sum_{i=1}^d \Bigg\{ E\bigg[ \lambda'^i_t(\vartheta) - \lambda'^i_t(\vartheta^*) - \log\bigg( \frac{\lambda'^i_t(\vartheta)}{\lambda'^i_t(\vartheta^*)}\bigg) \lambda'^i_t(\vartheta^*) \bigg] \\
      &&+ E\bigg[ \int_{\bbX} \log\bigg( \frac{q'^i_t(x, \theta_M^*)}{q'^i_t(x, \theta_M)}\bigg) q'^i_t(x, \theta_M^*) \lambda'^i_t(\vartheta^*) dx \bigg] \Bigg\}.
  	\end{eqnarray*}
  	Since each term on the right-hand side is non-negative, we have
  	\[
  		\lambda'^i_t(\vartheta) - \lambda'^i_t(\vartheta^*) - \log\bigg( \frac{\lambda'^i_t(\vartheta)}{\lambda'^i_t(\vartheta^*)}\bigg) \lambda'^i_t(\vartheta^*) = 0, \quad a.s.
  	\]
  	and
  	\[
  		\log\bigg( \frac{q'^i_t(x, \theta_M^*)}{q'^i_t(x, \theta_M)}\bigg) q'^i_t(x, \theta_M^*) \lambda'^i_t(\vartheta^*) = 0, \quad dxP(d\omega)\text{-}a.e.
  	\]
  	Thus, we obtain $\lambda'^i_t(\vartheta) = \lambda'^i_t(\vartheta^*) \ a.s.$ and $q'^i_t(x, \theta_M) = q'^i_t(x, \theta_M^*) \ a.e.$
  	Then, we get $\theta_M = \theta_M^*$ by Assumption \ref{Ex and Sim Asm 2-1-1} (v-a).
  	On the other hand, we have
  	\begin{eqnarray}
  		\label{Ap eq3-1}
  		\mu_i^* - \mu_i &=& \sum_{j=1}^d \int_{(-\infty, t) \times \bbX} e^{-\beta_{ij}(t-s)}\bigg(\sum_{l=1}^{d'}m_{ijl} x_l\bigg) \nonumber\\
      &&- e^{-\beta_{ij}^*(t-s)}\bigg(\sum_{l=1}^{d'}m_{ijl}^* x_l\bigg) \bar{N}'^j(ds, dx), \quad a.s.,
  	\end{eqnarray}
  	for any $i = 1, \dots, d$.
  	Since the left-hand side is constant, the right-hand side must only have jumps of size zero.
  	Then, by Assumption \ref{Ex and Sim Asm 2-1-1} (iv), we get $m_{ijl}^* = m_{ijl}$ for all $i, j$, and $l$.
  	By taking the derivative with respect to $t$, we have
  	\begin{eqnarray*}
  		0 &=& \sum_{j=1}^d \int_{(-\infty, t) \times \bbX} \beta_{ij}e^{-\beta_{ij}(t-s)}\bigg(\sum_{l=1}^{d'}m_{ijl} x_l\bigg) \\
      &&- \beta_{ij}^*e^{-\beta_{ij}^*(t-s)}\bigg(\sum_{l=1}^{d'}m_{ijl}^* x_l\bigg) \bar{N}'^j(ds, dx), \quad a.s.,
  	\end{eqnarray*}
  	and thus, we get $\beta_{ij}^* = \beta_{ij}$ for all $i$ and $j$ such that $m_{ijl}^*>0$ for some $l$ by Assumption \ref{Ex and Sim Asm 2-1-1} (iv).
  	Finally, the right-hand side of (\ref{Ap eq3-1}) becomes zero, and $\mu_i^* = \mu_i$ holds.
  	Therefore, $\theta = \theta^*$ holds and contradicts $\theta \in \Theta-\{\theta^*\}$.

  	Next, we assume that there exists $\bby \in \real^{p_0+p_1}$ such that $\inf_{\nu^* \in \calN} \bby^T \Gamma(\nu^*)\bby = 0$.
  	We write $\bby = (\bby_{\lambda}, \bby_M)$, where $\bby_{\lambda}$ and $\bby_M$ are related to $\theta_{\lambda}$ and $\theta_M$, respectively, and $\theta_{\lambda}$ denotes non-nuisance parameters $\mu, \beta, m$.
  	From Assumption \ref{Ex and Sim Asm 2-1-1} (v-c), we have
  	\begin{eqnarray*}
  		0 = \inf_{\nu^* \in \calN}  \bby^T \Gamma(\nu^*) \bby
  		&=& \inf_{\nu^* \in \calN} \sum_{i=1}^d \Bigg\{ E\Bigg[ \frac{\big( \bby_{\lambda}^T \partial_{\theta_{\lambda}}\lambda'^i_t(\vartheta^*) \big)^2}{\lambda'^i_t(\vartheta^*)} \Bigg] \\
      &&+ E\Bigg[ \int_{\bbX} \big(\bby_M^T \partial_{\theta_M} \log q'^i_t(x,\theta_M^*) \big)^2q'^i_t(x, \theta_M^*) \lambda'^i_t(\vartheta^*) dx \Bigg]\Bigg\},
  	\end{eqnarray*}
  	and then it is necessary that almost surely
  	\begin{eqnarray}
  		\label{Ap eq3-2}
  		\inf_{\nu^* \in \calN} \big( \bby_{\lambda}^T \partial_{\theta_{\lambda}}\lambda'^i_t(\vartheta^*) \big)^2 = 0
  	\end{eqnarray}
  	and
  	\begin{eqnarray}
  		\label{Ap eq3-3}
  		\bby_M^T \partial_{\theta_M} \log q'^i_t(x,\theta_M^*) = 0
  	\end{eqnarray}
  	hold for any $t \ge 0$ and $i = 1, \dots, d$.
  	Then, (\ref{Ap eq3-3}) and Assumption \ref{Ex and Sim Asm 2-1-1} (v-b) lead to $\bby_M = 0$.
  	On the other hand, from Assumption \ref{Ex and Sim Asm 2-1-1} (i), the infimum in (\ref{Ap eq3-2}) for each path is realized by some positive $\beta^*_{ij}$, and we again write this point as $\beta^*_{ij}$.
  	Then, for $i= 1, \dots, d$,
  	\begin{eqnarray*}
  		0 &=& \bby_{\lambda}^T \partial_{\theta_{\lambda}}\lambda'^i_t(\vartheta^*) \\
  		&=& y_{\mu_i} + \sum_{j=1}^d \int_{(-\infty, t) \times \bbX} \Bigg\{ -(t-s)\bigg(\sum_{l=1}^{d'}m_{ijl}^*x_l\bigg)y_{\beta_{ij}} + \sum_{l=1}^{d'}x_ly_{m_{ijl}} \Bigg\} \\
      &&e^{-\beta_{ij}^*(t-s)} \bar{N}'^j(ds, dx), \quad a.s.,
  	\end{eqnarray*}
  	where $y_{\mu_i}, y_{\beta_{ij}}$, and $y_{m_{ijl}}$ are components of $\bby$ corresponding to non-nuisance parameters $\mu_i, \beta_{ij}$, and $y_{m_{ijl}}$, respectively.
  	Differentiating both sides $n$-times with respect to $t$, we almost surely get
  	\begin{eqnarray}
  		\label{Ap eq3-4}
  		0 &=& \sum_{j=1}^d \int_{(-\infty, t) \times \bbX} \Bigg\{ \big\{(t-s)\beta_{ij}^* -n\big\}\bigg(\sum_{l=1}^{d'}m_{ijl}^*x_l\bigg)y_{\beta_{ij}} -\beta_{ij}^*\sum_{l=1}^{d'}x_ly_{m_{ijl}} \Bigg\} \nonumber\\
      &&(-\beta_{ij}^*)^{n-1} e^{-\beta_{ij}^*(t-s)} \bar{N}'^j(ds, dx).
  	\end{eqnarray}
		Let $\Omega_{j'} = \big\{\omega \in \Omega \big\arrowvert T_1^{j'} < 1 \text{ and } T_1^k > 1 \text{ for $k \neq j'$.} \big\}$, where $T_1^j$ represents the first jump time of the counting process $N'^j_t = \bar{N}'^j\big([0,t] \times \bbX \big)$.
		We easily see $P[\Omega_{j'}] > 0$.
		By taking the limit $t \searrow T_1^{j'}$ in (\ref{Ap eq3-4}) on the set $\Omega_{j'}$, we obtain
		\[
			0 =  \bigg(\sum_{l=1}^{d'}m_{ijl}^*X^l_{T_1^{j'}}\bigg)y_{\beta_{ij}} + \frac{\beta_{ij}^*}{n}\sum_{l=1}^{d'}X^l_{T_1^{j'}}y_{m_{ijl}} \to \bigg(\sum_{l=1}^{d'}m_{ijl}^*X^l_{T_1^{j'}}\bigg)y_{\beta_{ij}} \quad \text{ as $n\to\infty$},
		\]
  	and thus we get $y_{\beta_{ij}} = 0$ since $m_{ijl}^*>0$ holds for some $l$ by the definition of $y_{\beta_{ij}}$.
  	Then, we immediately obtain $y_{m_{ijl}}=0$ by Assumption \ref{Ex and Sim Asm 2-1-1} (iv), and thus $y_{\mu_i}=0$ also holds.
  	Now $\bby = 0$, and we get the conclusion.
  \end{proof}


\section{Additional numerical experiments}
\label{Appendix Sim}
\subsection{Scenario with no zero coefficients}
We considered scenarios with no zero coefficients to see if the P-O estimator and QMLE perform similarly.
Here, we deal with the Hawkes process marked with "Topic" introduced in Section \ref{Ex and Sim 2}.
Let $\bar{N}$ be the $1$-dimensional GEMHP whose intensity is
\begin{eqnarray}
 	\lambda_t(\vartheta^*) &=& \mu + \int_{[0,t) \times \mathbb{X}} e^{-\beta(t-s)} \big(m_1x_1 + m_2x_2 + m_3x_3 \big)\bar{N}(ds, dx) \bigg\arrowvert_{\vartheta = \vartheta^*}\nonumber\\
 	&=& 1.2 + \int_{[0,t) \times \mathbb{X}} e^{-0.5(t-s)} \big(0.5x_1 + 0.3x_2 + 0.4x_3 \big)\bar{N}(ds, dx),\nonumber
\end{eqnarray}
where its marks independently and identically follow the $3$-dimensional Dirichlet distribution with a parameter $\alpha = (2, 2, 5)$.
Same as Subsection \ref{Ex and Sim 2-2}, we estimate the parameters $m_1, m_2, m_3, \mu, \beta$ and assume that only parameters $m$'s can take the zero value, i.e., we set $\theta^0 = (m_1, m_2, m_3)$ and $\theta^1 = (\mu, \beta)$.
We also set the hyperparameters of the P-O estimator to be $q=1.0, \gamma=2.0, a=0.5$, the observation times $T = 100, 500, 3000$, and the number of the Monte Carlo simulation $MC = 300$.

Table \ref{Appendix Table2-1} shows the fraction of trials in which the parameters $m_1, m_2$, and $m_3$ are estimated to be completely zero.
We see that both methods asymptotically make correct model selections, but that the probability of incorrectly estimating zero is higher for the P-O estimator.
\begin{table}[htbp]
	\centering
	\caption{{\bf Percentage of estimated to be zero.}}
	\label{Appendix Table2-1}
  \scalebox{0.9}{
	\begin{tabular}{| l | l | l | l | l | l |} \hline
		\multicolumn{6}{| l |}{QMLE: $T=100$} \\ \hline
		$m_1$ & 22.0\% & $m_2$ & 35.3\% & $m_3$ & 16.7\% \\ \hline
		\multicolumn{6}{| l |}{QMLE: $T=500$} \\ \hline
		$m_1$ & 3.67\% & $m_2$ & 12.0\% & $m_3$ & 0.33\% \\ \hline
		\multicolumn{6}{| l |}{QMLE: $T=3000$} \\ \hline
		$m_1$ & 0.00\% & $m_2$ & 1.67\% & $m_3$ & 0.00\% \\ \hline
	\end{tabular}
	\begin{tabular}{| l | l | l | l | l | l |} \hline
		\multicolumn{6}{| l |}{P-OE: $T=100$} \\ \hline
		$m_1$ & 35.3\% & $m_2$ & 57.3\% & $m_3$ & 44.0\% \\ \hline
		\multicolumn{6}{| l |}{P-OE: $T=500$} \\ \hline
		$m_1$ & 11.0\% & $m_2$ & 36.3\% & $m_3$ & 6.00\% \\ \hline
		\multicolumn{6}{| l |}{P-OE: $T=3000$} \\ \hline
		$m_1$ & 0.00\% & $m_2$ & 7.33\% & $m_3$ & 0.00\% \\ \hline
	\end{tabular}
  }
\end{table}

Table \ref{Appendix Table2-2} shows the averages of squared errors of the QMLE $(\tilde{\vartheta}_T - \vartheta^*)^2$ and the P-O estimator $(\check{\vartheta}_T - \vartheta^*)^2$.
When the observation time is small, we see that the QMLE perform better than the P-O estimator due to the miss model selection of the P-O estimator.
However, the difference becomes smaller as the observation time is longer.
We note that both the QMLE and the P-O estimator have asymptotic normality with the same variance.

\begin{table}[htbp]
 \caption{{\bf Average of squared errors.}}
 \label{Appendix Table2-2}
 \centering
  \begin{tabular}{ll | lllll}
   \hline
   $T$ & Method & $\mu$ & $\beta$ & $m_1$ & $m_1$ & $m_3$ \\ \hline\hline
   100 & QMLE & 3.69e-01 & 4.20e-02 & 2.81e-01 & 1.97e-01 & 7.09e-02 \\
   & P-OE & 7.58e-01 & 8.85e-01 & 5.45e-01 & 2.40e-01 & 9.63e-02 \\
   500 & QMLE & 7.41e-02 & 5.30e-03 & 8.55e-02 & 5.39e-02 & 2.01e-02 \\
   & P-OE & 7.54e-02 & 5.43e-03 & 1.00e-01 & 6.80e-02 & 2.58e-02 \\
   3000 & QMLE & 1.13e-02 & 7.02e-04 & 1.32e-02 & 1.28e-02 & 3.36e-03 \\
 	 & P-OE & 1.12e-02 & 7.00e-04 & 1.39e-02 & 1.53e-02 & 3.51e-03 \\
   \hline
  \end{tabular}
\end{table}


\subsection{Comparison with previous studies}
In this subsection, we compare the performance of the P-O estimator with the mixed method of Lasso and nuclear regularization, introduced in \cite{ZhouEtal2013}, and the elastic net.
These classical methods are implemented in tick library\footnote{The documentation is available here \url{https://x-datainitiative.github.io/tick/}.} in Python3, see \cite{BacryEtal2018}, and work only for an exponential Hawkes model whose decay parameter is given.

Here, we consider the $4$-dimensional exponential Hawkes process $N_t = (N^1_t, \dots, N^4_t)$, see Eq. (\ref{Ex and Sim Eq 1-1-1}), whose intensity with the following parameters:
\[
	\mu^* = (0.05, 0.05, 0.05), \quad
	\alpha^* = \left(
		\begin{array}{cccc}
            0.15 & 0 & 0 & 0\\
            0.15 & 0 & 0 & 0\\
            0 & 0.1 & 0.1 & 0.1\\
            0 & 0.1 & 0.1 & 0.1
		\end{array}
		\right),
\]
and the decay parameter is given by $\beta^*_{ij} = 1.0$ for all $i, j = 1, \dots, 4$.
We only estimate the parameters $\vartheta=(\mu, \alpha)$ and assume that only parameters $\alpha$ can take the zero value, i.e., we set $\theta^0 = \alpha$ and $\theta^1 = \mu$.

The mixed method of \cite{ZhouEtal2013} is defined by
\begin{eqnarray}
\label{Appendix Eq 2-2-1}
	\tilde{\vartheta}_T  \in argmin_{\vartheta \in \Xi} \Big[ -l_T(\vartheta) + C_m \big\{ (1-\rho_m) \| \alpha \|_* + \rho_m \| \alpha \|_1 \big\} \Big],
\end{eqnarray}
where $C_m$ and $\rho_m$ are hyperparameters, $\| \cdot \|_*$ is the nuclear norm of a matrix, which is defined to be the sum of its singular value, $\| \cdot \|_1$ is the $L^1$ norm, and $l_T$ is the log-likelihood process of the Hawkes process.
On the other hand, the elastic net is given by
\begin{eqnarray}
\label{Appendix Eq 2-2-2}
	\tilde{\vartheta}_T \in argmin_{\vartheta \in \Xi} \Big[ R_T(\vartheta) + C_e \big\{ (1-\rho_e) \| \alpha \|_1 + \rho_e \| \alpha \|_2 \big\} \Big],
\end{eqnarray}
where $C_e$ and $\rho_e$ are hyperparameters, $\| \cdot \|_2$ is the $L^2$ norm, and $R_T$ is the least-squares function for the Hawkes process, that is,
\[
    R_T(\vartheta) = \frac{1}{T} \sum_{i=1}^d \left\{ \int_0^T \big(\lambda^i_t(\vartheta) \big)^2 dt - 2 \int_0^T \lambda^i_t(\vartheta) N^i(dt) \right\}.
\]

We set the hyperparameters in Eqs. (\ref{Appendix Eq 2-2-1}) and (\ref{Appendix Eq 2-2-2}) to be $C_m = C_e = 1000$, $\rho_m = 0.5$, and $\rho_e = 0.95$, and the hyperparameters of the P-O estimator to be $q=1.0, \gamma=1.0, a=0.5$.
Let the observation times $T = 3000$ and the number of the Monte Carlo simulation $MC = 300$.

Table \ref{Appendix Table2-2-1} shows the fraction of trials in which the parameter $\alpha_{ij}$'s are estimated to be completely zero.
Here, we regarded estimated values less than 1.0e-8 as zero by taking into account the numerical error in the tick library.
We can see that the variable selection is performed more accurately by the P-O estimator than by the other methods.

\begin{table}[htbp]
  \centering
	\caption{{\bf Percentage of estimated to be zero.}}
	\label{Appendix Table2-2-1}
	\begin{tabular}{| l | l | l | l | l | l | l | l |} \hline
		\multicolumn{8}{| l |}{Mixed Method in \cite{ZhouEtal2013}} \\ \hline
		$\alpha_{11}$ & 0.00\% & $\alpha_{12}$ & {\bf 40.3\%} & $\alpha_{13}$ &  {\bf 40.3\%} & $\alpha_{14}$ & {\bf 40.0\%} \\ \hline
		$\alpha_{21}$ & 0.00\% & $\alpha_{22}$ & {\bf 43.7\%}  & $\alpha_{23}$ & {\bf 39.7\%} & $\alpha_{24}$ & {\bf 40.3\%} \\ \hline
		$\alpha_{31}$ & {\bf 32.0\%} & $\alpha_{32}$ & 0.00\% & $\alpha_{33}$ & 0.00\% & $\alpha_{34}$ & 0.00\%\\ \hline
		$\alpha_{41}$ & {\bf 41.7\%} & $\alpha_{42}$ & 0.00\% & $\alpha_{43}$ & 0.00\% & $\alpha_{45}$ & 0.00\% \\ \hline\hline
		\multicolumn{8}{| l |}{Elastic Net} \\ \hline
		$\alpha_{11}$ & 0.00\% & $\alpha_{12}$ & {\bf 56.7\%} & $\alpha_{13}$ &  {\bf 59.7\%} & $\alpha_{14}$ & {\bf 61.0\%} \\ \hline
		$\alpha_{21}$ & 0.00\% & $\alpha_{22}$ & {\bf 60.3\%}  & $\alpha_{23}$ & {\bf 60.7\%} & $\alpha_{24}$ & {\bf 60.0\%} \\ \hline
		$\alpha_{31}$ & {\bf 55.0\%} & $\alpha_{32}$ & 0.67\% & $\alpha_{33}$ & 0.00\% & $\alpha_{34}$ & 0.00\%\\ \hline
		$\alpha_{41}$ & {\bf 57.7\%} & $\alpha_{42}$ & 0.00\% & $\alpha_{43}$ & 0.00\% & $\alpha_{45}$ & 0.00\% \\ \hline\hline
		\multicolumn{8}{| l |}{P-O Estimator} \\ \hline
		$\alpha_{11}$ & 0.00\% & $\alpha_{12}$ & {\bf 93.3\%} & $\alpha_{13}$ &  {\bf 92.3\%} & $\alpha_{14}$ & {\bf 93.0\%} \\ \hline
		$\alpha_{21}$ & 0.33\% & $\alpha_{22}$ & {\bf 92.7\%}  & $\alpha_{23}$ & {\bf 95.3\%} & $\alpha_{24}$ & {\bf 95.7\%} \\ \hline
		$\alpha_{31}$ & {\bf 90.3\%} & $\alpha_{32}$ & 4.33\% & $\alpha_{33}$ & 1.00\% & $\alpha_{34}$ & 1.00\%\\ \hline
		$\alpha_{41}$ & {\bf 90.3\%} & $\alpha_{42}$ & 3.00\% & $\alpha_{43}$ & 2.00\% & $\alpha_{45}$ & 2.00\% \\ \hline
	\end{tabular}
\end{table}

Table \ref{Appendix Table2-2-2} shows the averages of squared errors of each method.
For non-zero parameters, each method has almost the same level of variance.
For zero parameters, we get a smaller error by the P-O estimator than by the other methods due to the accurate model selection.

\begin{table}[htbp]
	{\footnotesize
 	\caption{{\bf Average of squared errors.}}
 	\label{Appendix Table2-2-2}
  \begin{tabular}{l | llll ll}
		\hline
		Method & $\mu_1$ & $\mu_2$ & $\mu_3$ & $\mu_4$ & $\alpha_{11}$ & $\alpha_{12}$ \\ \hline\hline
   	Mixed Method & 2.71e-05 & 2.17e-05 & 2.69e-05 & 2.52e-05 & 1.38e-03 & {\bf 2.79e-04} \\
   	Elastic Net & 2.71e-05 & 2.14e-05 & 3.06e-05 & 2.76e-05 & 1.73e-03 & {\bf 2.71e-04} \\
		P-OE & 2.43e-05 & 2.06e-05 & 2.72e-05 & 2.55e-05 & 1.38e-03 & {\bf 1.70e-04} \\
   	\hline
 	\end{tabular}
 	\vspace{2mm}

 	\begin{tabular}{l | ll llll}
	 	\hline
		Method & $\alpha_{13}$ & $\alpha_{14}$ & $\alpha_{21}$ & $\alpha_{22}$ & $\alpha_{23}$ & $\alpha_{24}$ \\ \hline\hline
   	Mixed Method & {\bf 2.67e-04} & {\bf 2.77e-04} & 1.47e-03 & {\bf 3.39e-04} & {\bf 2.15e-04} & {\bf 1.78e-04} \\
   	Elastic Net & {\bf 2.32e-04} & {\bf 2.56e-04} & 1.81e-03 & {\bf 2.38e-04} & {\bf 1.88e-04} & {\bf 1.45e-04} \\
		P-OE & {\bf 1.71e-04} & {\bf 1.81e-04} & 1.51e-03 & {\bf 2.26e-04} & {\bf 1.21e-04} & {\bf 8.14e-05} \\
   	\hline
 	\end{tabular}
	\vspace{2mm}

 	\begin{tabular}{l | llll}
	 	\hline
		Method & $\alpha_{31}$ & $\alpha_{32}$ & $\alpha_{33}$ & $\alpha_{34}$ \\ \hline\hline
   	Mixed Method & {\bf 3.98e-04} & 1.36e-03 & 1.09e-03 & 1.16e-03 \\
   	Elastic Net & {\bf 4.14e-04} & 1.71e-03 & 1.24e-03 & 1.27e-03 \\
		P-OE & {\bf 2.76e-04} & 1.52e-03 & 1.14e-03 & 1.21e-03 \\
   	\hline
 	\end{tabular}
	\vspace{2mm}

 	\begin{tabular}{l | llll}
	 	\hline
		Method & $\alpha_{41}$ & $\alpha_{42}$ & $\alpha_{43}$ & $\alpha_{44}$ \\ \hline\hline
   	Mixed Method & {\bf 4.02e-04} & 1.31e-03 & 1.21e-03 & 1.09e-03 \\
   	Elastic Net & {\bf 4.15e-04} & 1.46e-03 & 1.37e-03 & 1.29e-03 \\
		P-OE & {\bf 3.00e-04} & 1.45e-03 & 1.31e-03 & 1.17e-03 \\
   	\hline
 	\end{tabular}}
\end{table}



\end{appendix}

\bibliography{ref.bib}


\end{document}